\documentclass[a4paper,final]{article}
\usepackage{showkeys}


\clubpenalty = 1000
\widowpenalty = 1000 \displaywidowpenalty = 1000

\usepackage[english]{babel}
\usepackage[applemac]{inputenc}

\usepackage{amsfonts}
\usepackage{amssymb}
\usepackage{amsmath}
\usepackage[arrow, matrix, curve]{xy}
\usepackage{enumerate,xspace}
\usepackage{tikz}
\usetikzlibrary{trees}
\usetikzlibrary{snakes}
\usepackage{ellipsis}
\usepackage{ifthen}

\numberwithin{equation}{section}

\usepackage{theorem}

\usepackage{prettyref}
\newcommand{\prref}[1]{\prettyref{#1}}
\newrefformat{thm}{Theorem~\ref{#1}}
\newrefformat{lm}{Lemma~\ref{#1}}
\newrefformat{dfn}{Definition~\ref{#1}}
\newrefformat{cor}{Corollary~\ref{#1}}
\newrefformat{prop}{Proposition~\ref{#1}}
\newrefformat{sec}{Section~\ref{#1}}
\newrefformat{kap}{Chapter~\ref{#1}}
\newrefformat{ex}{Example~\ref{#1}}
\newrefformat{rem}{Remark~\ref{#1}}
\newrefformat{fig}{Figure~\ref{#1}}

\newtheorem{theorem}{Theorem}[section]

\newtheorem{lemma}[theorem]{Lemma}
\newtheorem{proposition}[theorem]{Proposition}
\newtheorem{corollary}[theorem]{Corollary}

\theoremstyle{break}

\theoremstyle{normal}
\theorembodyfont{\upshape}
\newtheorem{example}[theorem]{Example}
\newtheorem{remark}[theorem]{Remark}
\newtheorem{definition}[theorem]{Definition}

\newenvironment{proof}[1][Proof.]{
\begin{trivlist}
\item[\hskip \labelsep {\bfseries #1}]}{\hspace*{\fill}$\Box$\end{trivlist}
}


\newcommand{\ov}[1]{\overline{#1}}

\newcommand{\abs}[1]{\left|\mathinner{#1}\right|}
\newcommand{\Abs}[1]{\left\Vert\mathinner{#1}\right\Vert}
\newcommand{\floor}[1]{\left\lfloor\mathinner{#1} \right\rfloor}
\newcommand{\ceil}[1]{\left\lceil\mathinner{#1} \right\rceil}

\newcommand{\set}[2]{\left\{\, \mathinner{#1}\vphantom{#2}\: \left|\: \vphantom{#1}\mathinner{#2} \right.\,\right\}}
\newcommand{\oneset}[1]{\left\{\, \mathinner{#1} \,\right\}}
\newcommand{\smallset}[1]{\left\{\mathinner{#1}\right\}}



\newcommand{\N}{\mathbb{N}}



\newcommand{\IFF}{if and only if\xspace}
\newcommand{\homo}{homomorphism\xspace}

\newcommand{\Homos}{Homomorphisms\xspace}
\newcommand{\auto}{automorphism\xspace}

\newcommand{\morph}{morphism\xspace}
\renewcommand{\phi}{\varphi}
\newcommand{\eps}{\varepsilon}

\newcommand{\Sg}{\Sigma}
\newcommand{\Sig}{\Sigma}
\newcommand{\GG}{\Gamma}

\newcommand{\DD}{\Delta}

\newcommand{\oo}{\omega}
\newcommand{\OO}{\Omega}
\newcommand{\alp}{\alpha}
\newcommand{\Alp}{\mathop{\mathrm{alph}}}
\newcommand{\Capa}{\mathop{\mathrm{Capa}}}
\newcommand{\bet}{\beta}
\newcommand{\gam}{\gamma}
\newcommand{\del}{\delta}
\newcommand{\kap}{\kappa}
\newcommand{\lam}{\lambda}

\newcommand{\sig}{\sigma}


\newcommand{\Fab}{F^{\mathrm{ab}}}

\newcommand{\ig}{interval grammar\xspace}
\newcommand{\igs}{interval grammars\xspace}
\newcommand{\Igs}{Interval grammars\xspace}



\newcommand{\Aut}{\mathrm{Aut}}
\newcommand{\End}{\mathrm{End}}

\newcommand{\cC}{\mathcal{C}}
\newcommand{\cS}{\mathcal{S}}

\newcommand{\dist}{\mathrm{dist}}
\newcommand{\Cay}{\mathrm{Cay}}
\newcommand{\Chann}{\mathrm{Ca}}


\renewcommand{\phi}{\varphi}

\newcommand{\ggt}{\mathop{\mathrm{gcd}}}

\newcommand{\eval}{\mathrm{eval}}

 \newcommand{\leftp}{\mathrm{l}}
 \newcommand{\rightp}{\mathrm{r}}

\newcommand{\invol}{\overline{\,^{\,^{\,}}}}

\newcommand{\Oh}{\mathop{\mathcal O}}

\newcommand{\ob}{\bar{b}}
\newcommand{\oa}{\bar{a}}
\newcommand{\oc}{\bar{c}}

\newcommand{\ox}{\overline{X}}
\newcommand{\oy}{\overline{Y}}

\newcommand\lds{,\ldots ,}

\newcommand{\sse}{\subseteq}
\newcommand{\es}{\emptyset}
\newcommand{\sm}{\setminus}
\newcommand{\os}[1]{\oneset{#1}}
\newcommand{\wh}[1]{\widehat{#1}}
\newcommand{\wt}[1]{\widetilde{#1}}


\newenvironment{ok}{\noindent\color{red} olga:}{}
\newenvironment{vd}{\noindent\color{blue} volker:}{}

\newenvironment{aw}{\noindent\color{magenta} AW:}{}

\definecolor{blue}{rgb}{0.211,0.211,0.656}
\newcommand{\blue}{\color{blue}}



\begin{document}
\iftrue

\title{SLP compression for solutions of equations with constraints in free and hyperbolic groups}

\author{Volker Diekert\footnote{Universit{\"a}t Stuttgart, FMI. 
 Universit{\"a}tsstra{\ss}e 38, 
 70569 Stuttgart, Germany  \texttt{diekert@fmi.uni-stuttgart.de}}
, Olga Kharlampovich\footnote{Dept. Math and Stats, Hunter College and Graduate Center, CUNY, 695 Park Ave, New York, NY, USA, 10065 \texttt{okharlampovich@gmail.com}}\\ and Atefeh Mohajeri Moghaddam\footnote{Dept. Math and Stats, McGill University, Montreal, Canada, H3A 0B9 \texttt{mohajeri@math.mcgill.ca}}
 }
\date{August 19th, 2013}
\maketitle

\begin{abstract} The paper is a part of an ongoing program which aims to show that the existential theory in free groups (hyperbolic groups or even toral relatively hyperbolic) is NP-complete. For that we study compression of solutions with straight-line programs (SLPs) as suggested originally by Plandowski and Rytter in the context of a single word equation. We review some basic results on SLPs and give full proofs in order to keep this fundamental part of the program self-contained. 
Next we study systems of equations with constraints in free groups and more generally in free products of abelian groups. We show how to compress minimal solutions 
with extended Parikh-constraints. This type of constraints allows to express semi linear conditions as e.g.~alphabetic information. The result relies on some combinatorial analysis and has not been shown elsewhere. We show similar compression results for 
Boolean formula of equations over a torsion-free  $\delta$-hyperbolic group. The situation is much more delicate than in free groups. As  byproduct we improve the estimation of  the ``capacity'' constant used by Rips and Sela in their  paper  
``Canonical representatives and equations in hyperbolic groups'' from a double-exponential bound in $\delta$ to some single-exponential bound. The final section shows compression results for toral relatively hyperbolic group using the work of Dahmani: 
We  show that given a system of equations  over a fixed
toral relatively hyperbolic group, for every solution 
of length $N$ there is an SLP for another solution  such that the size of the SLP is 
bounded by some polynomial $p(s+ \log N)$ where $s$ is the size of the system.
\end{abstract}

\section*{Introduction}\label{sec:intro}
This work is motivated by the conjecture that the problem of satisfiability of a system of equations in a free group or free semigroup is NP-complete. There is a polynomial-time reduction from 
satisfiability in free groups to satisfiability in free semigroups; and 
it is also  known that this problem is NP-hard for free groups (even in the special case of quadratic equations, \cite{KharlampovichLMT10}). So one should prove that it is in NP. The roadmap how to prove this was suggested by Plandowski and Rytter in \cite{pr98icalp}. The idea is to 
prove that the length of a minimal solution is  bounded by a single-exponential function $2^{p(s)},$ where $p(s)$ is a polynomial in the size $s$ of the system of equations. Once this bound is established an NP-algorithm 
can guess a compressed version of the solution. An additional deterministic 
polynomial time algorithm can verify that the guess is indeed a solution.
The result of \cite[Thm.~3]{pr98icalp} is as follows. 
Assume that the 
length of a minimal solution of a word equation of length $s$ is bounded by some function $f(s)$. Then for a word equation of length $s$ and $f(s)$ written in binary, 
 the satisfiability of the equation 
can be decided in non-deterministic polynomial time. 
 This result was shown
 via Lempel-Ziv encodings of minimal solutions \cite[Thm.~2]{pr98icalp}, but it has been  apparent that the result 
 holds also for encodings via straight-line programs (SLPs) and for  systems of equations. Moreover it extends to Boolean formulae of equations in free groups and
 free semigroups, as shown in  \cite{hagenahdiss2000}. Actually, a more general result was established
  concerning
 systems of equations ``with rational constraints ''. 
 Rational constraints are given by regular languages (specified by NFAs, i.e., by non-determinic finite automata) which, algebraically (by the transformation monoids of the NFAs), can be reinterpreted by conditions on images in finite monoids. This approach dates back to the work of Schulz \cite{sch91} and is also explained in details e.g.{} in \cite[Ch.~12]{lot02} or \cite{dgh05IC}. Hence, for systems of equations with rational constraints
 the sizes of finite monoids become crucial.
 If the sizes of these monoid are at most  polynomial size with respect to the 
 input size of equations then \cite[Thm.~2]{pr98icalp} and \cite[Thm.~3]{pr98icalp}
 are true in this more general setting by \cite[Chapter 8]{hagenahdiss2000}.
  Monoids of polynomial size suffice to treat inequalities as constraints, but e.g.~did not allow to treat alphabetic constraints. 
  And indeed, allowing arbitrary rational constraints in the system changes the picture drastically: A similar result 
 about the existence of SLPs of size $p(s+\log f(s))$ cannot hold unless NP = PSPACE because the ``empty-intersection-problem'' for regular languages is a special case which is known to be PSPACE-complete due to a classical result of Kozen \cite{koz77}.

 In this paper we continue the research in two directions. 
 First, we deal with extended Parikh-constraints. This is slightly more general than 
 adding semi-linear constraints and strictly more general than alphabetic constraints, i.e.,
prescribing the set of letters occurring in a solution.
In the setting of extended Parikh-constraints it is very natural to extend the results 
to finitely generated free products of abelian groups.
We show that for every solution 
of length $N$ there is an SLP for another solution with the same extended Parikh-image and the same length $N$ such that the size of the SLP is logarithmic in $N$ (if $N$ is 
at least exponential in the size of the equation). 

Based on the results in the first part we show in a second part
that,  given a Boolean formula $\Phi$ of equations over a
$\delta$-hyperbolic group with generating set $\Sig$, for every solution 
of length $N$ there is an SLP for another solution such that the size of the SLP is bounded by a polynomial in $\kappa + \Abs \Phi + \log N$, where $\Abs \Phi$ is the size of the formula and 
$\kappa$ depends exponentially on $\delta$ and $\log |\Sig |$ 
(at most double-exponentially), see 
\prref{cor:boolhyp}. In the final part of the paper we consider systems of equations 
over toral relatively hyperbolic groups, and we obtain similar results with $\kappa$ 
depending exponentially on parameters of the group.
\section{Preliminaries}
\subsection{Words and monoids with involution}\label{sec:sovielumnix}
All monoids (in particular all groups) in this paper are assumed to be finitely generated. By 
$\Sigma$ (resp. $\GG^*$) we denote a finite alphabet and  
$\Sigma^*$ (resp. $\GG^*$) is the corresponding free
monoid.  (Typically, $\Sig$ is a generating set of a group $G$ and 
 $\GG= \Sig \cup \Sig^{-1}$.) Elements of free monoids are called \emph{words}.
A word in $\Sigma^*$ can be written as $w =a_1 \cdots a_n$ with $n \geq 0$ and $a_i \in \Sg$, where
$n = \abs w$ is its \emph{length}.
For $a\in \Sg$ the $a$-length of
$w$ is denoted by ${\abs w}_a$. It counts the number of $a$'s occurring in $w$.
We let
$\Alp(w) =\os{a_1 \lds a_n}$ be the \emph{alphabet} of $w$; it is the set of letters occurring in the word.  The word of length $0$ is called the
\emph{empty word}; it is denoted by $1$, since it is the neutral element of $\Sigma^*$. We have $\Alp(1) =\es$.

A \emph{factor} of a
word $w$
is a word $v$ such that $w=w_1vw_2$. A factor $v$ is a  \emph{prefix} (resp.{} \emph{suffix}), if we can write $w=vw_2$ (resp.{} $w=w_1v$).
For $0 \leq \alp \leq \bet \leq \abs w$ and $ w= a_1 \cdots a_n$ we define the factor $w[\alp, \bet]$ by
$$w[\alp, \bet] = a_{\alp+1} \cdots a_\bet.$$
Note that $\abs{w[\alp, \bet]} = \bet - \alp$. Moreover, prefixes can be written as
$w[0, \bet]$ and suffixes as $w[\alp, \abs w]$.

An \emph{involution} on a set is a bijection $\invol$ such that
$\overline{\overline{x}} = x$ for all elements $x$.
If $M$ is a monoid, then an
involution $\invol:M \to M$ must also satisfy $\overline{xy}=\overline{y}\,\overline{x}$. 
If $1\in M$ is the neutral element, then $\ov 1 = 1 $
since neutral elements are unique in monoids. A \emph{\morph} between
monoids with involution is a \homo $h:M\to M'$ such that $h(\ov x) = \ov{h(x)}$. 
%
A group $G$ is viewed as a monoid with involution
by letting $\ov g = g^{-1}$ for $g \in G$. \Homos between groups are morphisms
of monoids with involution. In groups we do not distinguish between $\ov g$ and $g^{-1}$. 

If a group $G$ is generated by 
$\Sg$, then we may define  $\Gamma= \Sigma \cup \overline{\Sigma}$, where
$\overline{\Sigma}= 
\set{\ov a}{a \in \Sigma}$
is a disjoint copy of $\Sigma$. We let
$\overline{\overline{a}} =a$. This defines an involution
$\invol:\Gamma \to \Gamma$; and the involution is extended to $\GG^*$
by $\overline{a_1 \cdots a_n} = \overline{a_n} \cdots \overline{a_1}$.
Thus, $\GG^*$ is a monoid with involution, and every mapping from $\Sg$ to  another monoid $M$ with involution extends uniquely to a \morph
$\psi: \Gamma^* \to M$. Hence, $\Gamma^*$ is the free monoid with involution over $\Sg$. 
Every group element in $G$ can be represented as a word over $\GG$. 
There is a canonical \morph of  $\Gamma^*$ onto the free group $F(\Sig)$ over $\Sig$. Moreover, as a set, we identify $F(\Sig)$
with the set of \emph{reduced words}. These are the words $w \in \Gamma^*$ without any factor $a \ov a$ where $a \in \GG$. 
Reduced words are unique \emph{geodesic} normal forms for elements in $F(\Sig)$. 
For $w \in \Gamma^*$ we let $\wh w$ denote the reduced word
such that $ w = \wh w \in  F(\Sig)$.

%

\subsection{Straight-line programs}
By $\OO$ we denote a finite set of variables, which is endowed with an involution $X \mapsto \ov X$ without fixed points. 
Hence we can write $\OO$ as a disjoint union $\OO = \OO_+ \cup \set{\ov X}{X \in \OO_+}$.
Variables occur in the context of equations and in the context of straight-line programs. For the use in straight-line programs we need to specify a partial order ${<}$ on them.

Straight-line programs are widely used, frequently in the context of algebraic circuits.
In this paper a straight-line program is a special case
of a \emph{straight-line grammar} which in turn is, by definition, 
 a reduced context-free grammar which produces exactly one word. 
If a  grammar generates only one word, then  the grammar encodes the generated word.
In some cases the size of the generated word can be exponentially longer than the size of the grammar; and therefore straight-line grammars can be used for data compression.

\begin{example}\label{ex:aton}
Let $n \in \N$. 
\begin{enumerate}
\item 
 Consider the
following grammar with axiom $A_0$ and rules $A_{i-1} \to A_{i}A_{i}$ for $1 \leq i \leq n$ and a single terminal rule $A_{n} \to a$. The grammar has linear size in $n$,
but the axiom generates the word $a^{2^n}$ of length $2^n$.
\item (Fibonacci words) There are terminal rules $F_{1} \to b$ and $F_{2} \to a$, and for $n \geq 3$ there are rules $F_n \to F_{n-1}F_{n-2}$.
Then each $F_{n}$ generates a word $F({n})$ with length being the $n$-th Fibonacci number. Moreover, for $n \geq 3$ the word $F(n-1)$ is a prefix of $F(n)$, hence one can define an infinite sequence of letters where all $F(n)$ appear as prefixes.
\end{enumerate}
\end{example}
The following example had direct impact to algorithmic group theory. The example is due to Saul Schleimer. He used it to show that the word problem for the group $\Aut(F)$ of automorphisms of a free groups is decidable in polynomial time. We will come back to this later. 
As a matter of fact it is more convenient to consider the 
Schleimer's example in the setting of monoids. 
\begin{example}[Saul Schleimer]\label{ex:saul}
Let $M$ be a monoid generated by $\Sig$ and $A$ be a finite set of 
endomorphism of $M$; e.g., $M$ is the free group $F=F(\Sig)$ and $A$ any finite generating set for  $\Aut(F)$. Let $w= \alp_1 \cdots \alp_n \in A^*$ with $\alp_i \in A$ and $a\in \Sig$. Then the pair $(w,a)$ defines an SLP of size $\Oh(n)$ which evaluates to $\alp_1 \cdots \alp_n(a)$ as a monoid element in $M$ as follows. 
Variables of the SLP are denoted by $A[i,a]=  A[\alp_1 \cdots \alp_i, a ]$ for $0 \leq i \leq n$ and $a \in \Sig$. Thus, there are exactly  $\abs \Sig \cdot (n+1)$ variables. 
In order to define the rules consider first $i\geq 1$. If 
 $\alp_i( a)= b_1 \cdots b_{k}$ with $b_j \in \Sig$, then we define the production
$$A[i,a] \to A[i-1,b_1] \cdots A[i-1,b_k].$$
Finally, we define terminal rules
$$A[0,a] \to a.$$
It is clear that every variable of this grammar produces exactly one word. The variable $A[n,a]$ produces a word which yields $w(a) \in M$
with the interpretation that $w$ denotes an endomorphism of $M$ and 
$a \in M$.  
\end{example}

A straight-line program is essentially a straight-line grammar in Chomsky normal form. Formally, a
 \emph{straight-line program} (\emph{SLP} for short) is a set $S$
of rules which have either form:
\begin{align*}
X &\to a,\\
X &\to YZ \text{ where } X< Y, X< \ov Y, X< Z, \text{ and } X < \ov Z
\end{align*}
Here $X \in \OO_+$, $Y,Z \in \OO$, and $a \in \GG \cup \os 1$. Moreover, we demand that  each $X\in \OO_+$ appears exactly once on a left-hand side. 

We define the \emph{height} $h(X)$ and \emph{evaluation} $\eval(X)$
for $X \in \OO$ inductively.
\begin{itemize}
\item If $X \to a$ is a rule, then  $h(X) = 1$ and  $\eval(X)= a$.
\item If $h(X)$ and  $\eval(X)$ are defined, then $h(\ov X) = h(X)$ and  $\eval(\ov X) = \ov{\eval(X)}$.
\item If $X \to YZ$ is a rule, then $h(X)=1+  \max\os{h(Y), h(Z)}$ and $\eval(X)= \eval(Y) \; \eval(Z)$.
\end{itemize}

\begin{example}\label{ex:slpforab}
Let $M$ be a commutative  monoid generated by $\Sg$. Then for each word $w \in \Sg^*$ of length $n$ there exists an SLP with $\Oh(\abs \Sg \cdot \log n)$ variables and axiom $X$ such that 
$\eval(X) = w$ in $G$. 
Indeed, every $w$ can be written in $M$ as a product 
$\gam_1^{n_1}\cdots \gam_r^{n_r}$ with $n_i \in \N$ and $r = \abs \Sg$.
\end{example}

\subsection{\Igs}\label{sec:igs}
\Igs have been introduced in the thesis of Hagenah \cite{hagenahdiss2000}.
They compress words in a very similar fashion as Lempel-Ziv compression.  The notion of \ig  is also very closely related to the notion of \emph{composition system} as defined by Gasieniec, Karpinski, Plandowski, and Rytter in \cite{GasieniecKPR96} as well as to the data structure used by Mehlhorn, Sundar, and Uhrig\cite{MehlhornSU97}.
An SLP is a special case
of a composition system, and a composition system in turn is a special case
of an \ig. Hagenah has shown how to transform an \ig into an equivalent
SLP with a quadratic blow-up in size, see \prref{thm:ig2slp}.
Thus, all three formalisms can be viewed as equivalent. As \igs provide a rather flexible formalism which is very intuitive, 
we use them here for compression.


An \emph{\ig }(\emph{IG} for short) is a set $S$
of rules which have either form:
\begin{align*}
X &\to a,\\
X &\to Y[\alp,\bet],\\
X &\to Y[\alp,\bet]Z[\gam,\del]
\end{align*}
Here $X \in \OO_+$, $Y,Z \in \OO$, $\alp,\bet,\gam,\del\in \N$, and $a \in \GG \cup \os 1$. The other restrictions are listed below. 
The main idea is that if a variable $X$ evaluates to the word $w$, then $X[\alp,\bet]$ evaluates to the factor
$w[\alp,\bet]$.

In order to avoid  case distinction we treat a rule $X \to Y[\alp,\bet]$
as special case of $X \to Y[0,0]Y[\alp,\bet]$ whenever convenient.
There are several restrictions on the rules:
As for SLPs, each $X \in \Omega_+$ occurs in exactly one rule of the left hand side, and in all rules $X \to Y[\alp,\bet]Z[\gam,\del]$ we must have $X < Y$, $X < \ov Y$, $X < Z$, and $X< \ov Z$.
Next, we define the length $\abs X$ of a variable $X$ and the restrictions on  $\alp,\bet,\gam,\del$ simultaneously. If there is a rule $X \to a$, then we
let $\abs X = \abs{\ov X} = \abs a \in \os {0,1}$.
If there is a rule $X \to Y[\alp,\bet]Z[\gam,\del]$, then
$\abs X = \abs{\ov X} = \bet - \alp + \del - \gam$ and we must have
$0 \leq \alp \leq \bet \leq \abs Y$ and $0 \leq \gam \leq \del \leq \abs Z$.
In the following we assume that every \ig satisfies these restrictions.

For $w \in \GG^*$ we let $\abs w$, $h(w) = 0$, $\eval(w) =w$, and $w[\alp,\bet]$  as above. Now, we define for  $X\in \Omega\cup \GG$ and $0 \leq \alp \leq \bet \leq \abs X$ the terms $\abs X$, $h(X)$, $\eval(X)$, and $\eval([\alp,\bet])$.
The general rule is $h({\ov X})= h(X)$, $\eval({\ov X})
= \ov{\eval(X)}$, and $\eval(X[\alp,\bet])= \eval(X)[\alp,\bet]$.
Moreover, $\abs X = \abs {\eval(X)}$ and $\abs{X[\alp,\bet]}= \bet -\alp$.
Thus it is enough to define the \emph{height} $h(X)$ and \emph{evaluation} $\eval(X)$
for $X \in \OO_+$.
\begin{itemize}
\item If $X \to a$ is a rule, then  $h(X) = 1$ and $\eval(X)= a$.
\item If $X \to Y[\alp,\bet]Z[\gam,\del]$ is a rule, then $h(X)=1+  \max\os{h(Y), h(Z)}$ and $\eval(X)= \eval(Y)[\alp,\bet] \; \eval(Z)[\del,\gam]$.
\end{itemize}

For $\mu = \abs X$ and  $X[\alp,\bet]$ and we also define
$\ov{X[\alp,\bet]} 
= \ov{X}[\mu- \bet, \mu - \alp]. 
$
 In the following it is convenient to think that  for all  rules
$ X \to Y[\alp,\bet]Z[\gam,\del]$ and $X \to a$, we may also use  rules
$ \ov X \to \ov {Z[\gam,\del]}\; \ov{Y[\alp,\bet]}$ and $\ov X \to \ov a$, although our formalism does not list them explicitly.

The next proposition is used throughout in the paper. Its proof  straightforward and therefore omitted.
\begin{proposition}\label{prop:triv}
The following computation can be performed in polynomial time.
\begin{itemize}
\item Input: An \ig  $S$ and a list of words 
$w_1 \lds w_m$. 
\item Output for each $X \in \OO$:
\begin{enumerate}
\item  The height $h(X)$ and the length $\abs X$.
\item For each $w_i$ the answer whether $w_i$ appears as a factor in $\eval(X)$.
\end{enumerate}
\end{itemize}
\end{proposition}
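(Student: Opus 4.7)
The plan is to process the variables of the \ig $S$ in the partial order ${<}$, starting from the minima, so that every quantity attached to $X$ depends only on values already computed for variables $Y,Z$ appearing on the right-hand side of the rule defining $X$. Since $|\Omega|$ and the total size of $S$ are polynomial, a single bottom-up pass will suffice, provided each individual step is polynomial.

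For Part 1, both $h(X)$ and $\abs X$ are defined by a direct recursion over the rules: for $X\to a$ we set $h(X)=1$ and $\abs X \in \{0,1\}$; for $X\to Y[\alpha,\beta]Z[\gamma,\delta]$ we set $h(X)=1+\max\{h(Y),h(Z)\}$ and $\abs X = (\beta-\alpha)+(\delta-\gamma)$. The lengths can be exponential in $|S|$, but they are stored in binary, and addition, subtraction, and comparison of binary numbers are polynomial, so the whole sweep runs in polynomial time.

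For Part 2, fix a pattern $w_i$ of length $\ell_i$. I would introduce, for every \emph{subexpression} $Y[\alpha,\beta]$ syntactically occurring on the right-hand side of a rule of $S$, together with the trivial subexpressions $X[0,\abs X]$ for $X\in\Omega_+$, a boolean flag $F(Y[\alpha,\beta])$ meant to be true \IFF $w_i$ is a factor of $\eval(Y)[\alpha,\beta]$. There are $O(\abs S)$ such subexpressions, and I would compute them bottom-up. For the terminal case $Y\to a$ the test reduces to inspecting a single letter. For $Y\to P[\pi,\rho]\,Q[\sigma,\tau]$ and the total interval $[0,\abs Y]$, we have $\eval(Y)=\eval(P)[\pi,\rho]\cdot\eval(Q)[\sigma,\tau]$, and an occurrence of $w_i$ either lies inside $\eval(P)[\pi,\rho]$, inside $\eval(Q)[\sigma,\tau]$, or straddles the concatenation. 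The first two are already tabulated as $F(P[\pi,\rho])$ and $F(Q[\sigma,\tau])$; the straddling case is settled by extracting $s\in\GG^*$ of length $\min\{\ell_i-1,\rho-\pi\}$ from the suffix of $\eval(P)[\pi,\rho]$ and $p$ of length $\min\{\ell_i-1,\tau-\sigma\}$ from the prefix of $\eval(Q)[\sigma,\tau]$, and running a linear-time pattern-matching algorithm on $s\cdot p$. A general proper subexpression $Y[\alpha,\beta]$ is handled by the same recursion: the interval splits across the rule of $Y$ into at most two shorter subexpressions on $P$ and $Q$, and these are treated by the same dynamic program.

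The routines \textsc{Prefix} and \textsc{Suffix} needed for the straddling test are standard for \igs: one descends through the rules, maintaining the current window relative to the current variable, and emits terminal symbols once the window reaches length at most $\ell_i-1$; the recursion depth is bounded by $h(X)$, already computed in Part 1, and each level does $O(1)$ work besides emitting one of at most $\ell_i-1$ symbols. The only potential difficulty is ensuring that the recursive decomposition of a general interval $Y[\alpha,\beta]$ does not spawn a super-polynomial number of auxiliary subexpressions; this is controlled by the observation that each decomposition step strictly decreases the variable along the partial order ${<}$, so the total number of pairs $(Y,[\alpha,\beta])$ encountered for a fixed $w_i$ stays polynomial in $\abs S$. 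Summing over the $m$ patterns $w_1,\dots,w_m$ yields the desired polynomial bound, and this is the step one has to verify carefully — everything else is routine \ig bookkeeping.
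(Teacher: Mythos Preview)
The paper omits the proof entirely as ``straightforward'', so there is no argument to compare against. Your treatment of Part~1 is correct. For Part~2 the overall strategy---a bottom-up dynamic program with the three-way split (left piece, right piece, straddling occurrence) together with explicit extraction of prefixes and suffixes of length $<\ell_i$---is the right one and goes through without difficulty for SLPs.

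For general interval grammars, however, there is a gap. You justify the polynomial bound on the auxiliary pairs $(Y,[\alpha,\beta])$ by saying that ``each decomposition step strictly decreases the variable along the partial order ${<}$''. That observation bounds only the \emph{depth} of the recursion by $h(S)\le|\Omega|$; it does not bound the number of nodes. A straddling step produces two children, so a priori the recursion tree has size up to $2^{h(S)}$. Concretely, with a rule $Y\to P[\pi,\rho]\,Q[\sigma,\tau]$, decomposing $Y[\alpha,\beta]$ in the straddling case yields $P[\pi+\alpha,\rho]$ and $Q[\sigma,\sigma+\beta-(\rho-\pi)]$, neither of which need occur syntactically in $S$; the free endpoints then propagate further down, and nothing in your argument prevents exponentially many distinct intervals from being generated.

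The cleanest repair is to first convert the interval grammar to an SLP via \prref{thm:ig2slp} (a quadratic blow-up; its proof uses only heights and lengths, i.e.\ Part~1, so there is no circularity). On an SLP your dynamic program runs over exactly the grammar variables---no auxiliary intervals arise---and the argument is then genuinely routine. Alternatively one can import the weight argument from the proof of \prref{thm:ig2slp} directly: every generated interval can be normalised to one of the form $V[0,\gamma]$, and such intervals spawn at most one new interval of the same form per level, giving an $O(|\Omega|^{2})$ bound. Either way, the step you wave through is precisely the content of \prref{thm:ig2slp}, which the paper does prove in detail.
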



\section{Some polynomial time algorithms}\label{sec:ns}
In this section we review some polynomial time results for certain problems involving SLPs and \igs. We survey some known results and we give full proofs. 

\begin{theorem}[\cite{hagenahdiss2000}, Algorithm 8.1.4]\label{thm:ig2slp}
Let $S$ be an \ig, then we can construct in polynomial time an SLP
$S'$ containing variables $X_{\alp\bet}$ for all $ X[\alp,\bet]$ which appear
in $S$ such that $\eval(X_{\alp\bet})= \eval(X[\alp,\bet])$. Moreover, we have
 $\Abs {S'} \in \Oh(\abs {\OO}^2)$.
\end{theorem}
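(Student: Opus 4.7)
The plan is to replace the IG's interval-selection operator by explicit SLP variables. For each subword $X[\alpha,\beta]$ that must ultimately be evaluated, I would introduce a new SLP variable $X_{\alpha\beta}$ and derive its production from the IG rule defining $X$. I would process the variables of $S$ in increasing order of height, beginning with the intervals that appear explicitly on the right-hand sides of $S$ and propagating each request down into the children.

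For a terminal rule $X\to a$, the only legal intervals are $[0,0]$ and $[0,|a|]$, giving SLP rules $X_{00}\to 1$ (using an auxiliary unit variable $E\to 1$ so the rule has the required binary shape, e.g.\ $X_{00}\to E E$) and $X_{0|a|}\to a$. For a binary IG rule $X \to Y[\alpha_0,\beta_0]\,Z[\gamma_0,\delta_0]$, let $m=\beta_0-\alpha_0$ be the length of the $Y$-piece in $\eval(X)$. A requested interval $[\alpha,\beta]$ on $X$ falls into one of three cases: it lies inside the $Y$-piece ($\beta\leq m$), it lies inside the $Z$-piece ($\alpha\geq m$), or it straddles the split point $m$. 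In each case it translates to at most two new requests on the children, namely to $Y[\alpha_0+\alpha,\,\alpha_0+\min(\beta,m)]$ when $\alpha\leq m$ and to $Z[\gamma_0+\max(\alpha-m,0),\,\gamma_0+\beta-m]$ when $\beta\geq m$. The SLP rule I would emit is $X_{\alpha\beta}\to Y_{\alpha'\beta'}\,Z_{\gamma'\delta'}$, padded on one side by $E$ if only one child contributes. The involuted variable $\ov X_{\alpha\beta}$ is identified with $\ov{X_{|X|-\beta,\,|X|-\alpha}}$, so one only has to build the rules for $X\in\Omega_+$.

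Correctness $\eval(X_{\alpha\beta})=\eval(X[\alpha,\beta])$ is then immediate by induction on the height $h(X)$, since the three cases exactly mirror how $\eval(X)[\alpha,\beta]$ splits along the boundary $m$ prescribed by the rule for $X$. The algorithmic side is equally straightforward: each new request decreases the height by one and the same request is never processed twice, so the construction runs in polynomial time once one shows that the total number of distinct requests is polynomial.

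The main obstacle is the size bound $\Abs{S'}\in\Oh(\abs{\Omega}^2)$. The crucial claim is that for each $Y\in\Omega$, the set of distinct intervals ever requested on $Y$ has size $\Oh(\abs\Omega)$. The initial pool of intervals has size $\Oh(\abs\Omega)$, because each rule of $S$ contributes at most two intervals. The key observation is that when a request on $X$ with rule $X\to Y[\alpha_0,\beta_0]\,Z[\gamma_0,\delta_0]$ is propagated downward, the endpoints it introduces on $Y$ are either the fixed endpoints $\alpha_0,\beta_0$ of the rule (common to every request on $X$ that straddles $m$, and already in the initial pool on $Y$) or shifts $\alpha_0+p$ of endpoints $p$ already present on $X$. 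One can therefore set up an injection from intervals on $Y$ to pairs (rule of $S$, original interval) which gives the $\Oh(\abs\Omega)$ bound per variable and hence $\Oh(\abs\Omega^2)$ in total. Making this charging argument precise, in particular avoiding double-counting when $Y$ appears on the right-hand side of several rules, is the delicate combinatorial part of the proof.
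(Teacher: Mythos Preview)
Your algorithmic idea---propagating interval requests through the IG rules---is the same as the paper's, and the correctness argument by induction on height is fine. The real content, as you recognise, is the $\Oh(\abs{\Omega}^2)$ size bound, and this is where your write-up has a genuine gap. Your charging argument (``injection from intervals on $Y$ to pairs (rule of $S$, original interval)'') does not go through as stated: an endpoint on $Y$ can be reached along many different propagation paths, so there is no obvious injection, and the recursive bound on endpoints you suggest can blow up when a variable appears on the right-hand side of several rules. You yourself flag this as ``the delicate combinatorial part'' and leave it open; as written, the argument does not establish the bound.

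The paper closes this gap with a potential-function argument rather than a charging argument. One first processes interval symbols in order of \emph{minimal} height, so that by the time $X[\alpha,\beta]$ is treated the rule for $X$ has already been purified to an SLP rule $X\to Y Z$. One then assigns to each pending symbol $X[\alpha,\beta]$ the weight $h(X)$ if $\alpha=0$ and $2h(X)$ otherwise. The initial total weight is $\Oh(h(S)\abs{\Omega})\subseteq\Oh(\abs{\Omega}^2)$. The key trick is to exploit the involution: writing the purified rule as $X\to\overline{Y}Z$, a general interval $X[\alpha,\beta]$ with $\alpha>0$ is rewritten as $\overline{Y[0,\gamma]}\cdot Z[0,\delta]$, i.e.\ as two \emph{prefix}-type symbols of combined weight $h(Y)+h(Z)<2h(X)$. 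For $\alpha=0$ one descends to a single prefix symbol of smaller height. Thus every step strictly decreases total weight, and the number of new variables is bounded by the initial weight. Your propagation can be salvaged by essentially the same potential (assigning weight $h(X)$ to prefixes \emph{and} suffixes, $2h(X)$ to genuinely two-sided intervals, and observing that a two-sided interval descends along a single chain until it first straddles and then splits into one suffix and one prefix), but that is not the argument you gave.
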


\begin{proof}
In order to reduce the number of case distinctions we assume that there is  an $\eps$-rule $E \to 1$. (If not, we add such a rule.) Therefore we do not treat 
chain rules, because a rule $X\to Y[\alp,\bet]$ can always be  written 
as $X\to Y[\alp,\bet]E$. (Chain rules and $\eps$-rule are eliminated in a final round.)
Moreover, inside this proof it is convenient to assume that an \ig contains a rule
$X \to YZ$ \IFF it contains the  dual rule $\ov X \to \ov Z \;\ov Y$.

For every symbol  $X[\alp,\bet]$ which occurs in $S$ we define its \emph{weight}
$H(X[\alp,\bet])$ by its height $H(X[\alp,\bet]) = h(X)$ if $\alp = 0$ and 
twice its height $H(X[\alp,\bet]) =2 h(X)$ otherwise. The weight of $S$ is 
the sum of all weights. It is therefore in $\Oh(h(S) \Abs {\OO})\sse \Oh(\abs {\OO}^2)$.

We now describe a weight-reducing procedure which eliminates all symbols $X[\alp,\bet]$.
Consider a remaining $X[\alp,\bet]$ of minimal height. 
For $\bet - \alp \leq 1$ we have $\eval(X[\alp,\bet]) =a$ with $a \in \GG \cup \os 1$.
Without restriction there is a rule  $X_{a} \to a$. (If not, we add such a rule.)
We replace all occurrences of symbols $X[\alp,\bet]$ by $X_a$.
 Thus, we may assume $\bet - \alp \geq 2$.  

For $\alp >0$ we may assume that there is a rule of the form $X \to \ov YZ$, because the 
height is minimal and our assumption above.  By some simple arithmetic we  find $\gam,\del \in \N$ such that
 $$\eval(X[\alp,\bet]) =
 \eval(\ov{Y[0,\gam]}) \; \eval(Z[0,\del]).$$
 We introduce a new rule $X_{\alp\bet} \to\ov {Y[0,\gam]}Z[0,\del]$.
 After that all symbols $X[\alp,\bet]$ are replaced by the new variable $X_{\alp\bet}$.
 The height of $X_{\alp\bet}$ is $h(X)$ (but its weight is zero). 
 Since $H(Y[0,\gam]) + H(Z[0,\del]) = h(Y) + h(Z) < 2h(X)= H(X[\alp,\bet])$, this step is weight-reducing. 
 
 The remaining case is $\alp =0$.  
 Without restriction we have now a rule of the form 
 $ X \to YZ$.  For $\bet \leq \abs Y$ we introduce a
 new symbol $Y[0,\bet]$ and a
 rule $X_{\bet} \to Y[0,\bet]\,E$ where $E$ is the dummy symbol from above. 
 For $\bet > \abs Y$ we introduce a
 new symbol $Z[0,\gam]$ with $\gam = \bet - \abs Y$ and
 rule $X_\bet \to YZ[\gam]$. After that all symbols $X[0,\bet]$ are replaced by the
 new variable $X_\bet$. Since $H(Y[0,\bet]) = h(Y) < h(X) = H(X[0,\bet])$ and 
 $H(Z[0,\gam]) = h(Z) < h(X) = H(X[0,\bet])$, the step is again weight-reducing. 
 The number of steps and the size of the new SLP is bounded by the weight of $S$. 
  Thus, $\Abs {S'} \in \Oh(h(S) \Abs {\OO})\sse \Oh(\abs {\OO}^2)$.

 The missing transformation to deal with $\eps$- and  chain rules is standard and does not further increase the size of the SLP.
 This proves the theorem.
 \end{proof}

\subsection{Interval questions}\label{sec:iq}
The most basic question for SLPs is whether or not two variables evaluate to the same word.
It can be answered in polynomial time, thus without unfolding the word in general. This fundamental result is due to Plandowski \cite{pla94}.
His proof uses the well-known Fine-and-Wilf-Theorem. In order to keep this section self-contained we state Fine-and-Wilf and we give its proof, which is due to Jeff Shallit. 

\begin{theorem}[Fine and Wilf, 1965]\label{thm:finewilf}
Let $u,v \in \Sigma^*$ be non empty words, $s \in u\smallset{u,v}^*$ and $t \in v \smallset{u,v}^*$. Assume that  $s$ and $t$ have a common prefix of length  $\abs{u} + \abs{v} - \gcd(\abs{u},\abs{v})$, then it holds $uv = vu$. In particular, $u,v \in r^*$ where $r$ is the common prefix of $u$ and $v$ of length $\abs r = \gcd(\abs{u},\abs{v})$.
\end{theorem}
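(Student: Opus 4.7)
The plan is to prove the theorem by strong induction on $\abs u + \abs v$, mimicking the Euclidean algorithm. The reason this is natural is that the quantity $\abs u + \abs v - \gcd(\abs u, \abs v)$ controls the common prefix length, and after replacing the longer word by its remainder modulo the shorter one, this quantity drops by exactly the length of the shorter word, so the inductive hypothesis fires cleanly.

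For the base case I would take $\abs u = \abs v$. Since the common prefix has length at least $\abs u + \abs v - \gcd(\abs u, \abs v) \geq \abs u = \abs v$, and $s$ begins with $u$ while $t$ begins with $v$, this forces $u = v$, so $uv = vu$ trivially. For the inductive step I assume without loss of generality $\abs u < \abs v$. Because the common prefix has length at least $\abs v$, both $u$ (a prefix of $s$) and $v$ (a prefix of $t$) are prefixes of this common prefix, so $u$ is a prefix of $v$. Write $v = u v'$ with $\abs{v'} = \abs v - \abs u > 0$.

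Next I would strip the leading $u$ from both words, setting $s_1 = u^{-1}s$ and $t_1 = u^{-1}t$. These share a common prefix of length $\abs u + \abs v - \gcd(\abs u, \abs v) - \abs u = \abs v - \gcd(\abs u, \abs v) = \abs u + \abs{v'} - \gcd(\abs u, \abs{v'})$, using $\gcd(\abs u, \abs v) = \gcd(\abs u, \abs{v'})$. I would then verify the membership conditions needed to invoke induction on the pair $(u, v')$: since $v = u v'$, every word in $\{u, v\}^*$ also lies in $\{u, v'\}^*$, and $t_1 = v' \cdot (\text{tail})$ with the tail in $\{u,v\}^* \subseteq \{u,v'\}^*$; meanwhile $s_1 \in \{u,v\}^*$ is nonempty and therefore starts with $u$ or with $v = u v'$, hence with $u$, so $s_1 \in u\{u,v'\}^*$. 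By induction $u v' = v' u$, and then
\[
uv = u(uv') = u(v'u) = (uv')u = vu,
\]
as desired. For the ``in particular'' claim, $uv = vu$ in a free monoid is the classical criterion forcing $u, v \in r^*$ for some primitive $r$; taking $r$ of length $\gcd(\abs u, \abs v)$ and observing that $r$ is the prefix of $u$ (hence of $s$) and of $v$ (hence of $t$) of that length finishes the proof.

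The only delicate point I anticipate is the bookkeeping when passing from $\{u,v\}^*$ to $\{u,v'\}^*$: one must argue that stripping the initial $u$ from $s$ lands in $u\{u,v'\}^*$ rather than merely in $\{u,v'\}^*$. This is where the length inequality $\abs{s_1} \geq \abs v > 0$, coming from the hypothesis on the common prefix length, is essential to guarantee that $s_1$ is nonempty and therefore begins with a full block $u$ or $v$. Everything else reduces to arithmetic of lengths and the identity $\gcd(\abs u, \abs v) = \gcd(\abs u, \abs{v'})$.
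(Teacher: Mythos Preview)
Your proof is correct and follows essentially the same Euclidean-style induction as the paper's (which attributes the argument to Shallit): write $v = u v'$, strip the leading $u$ from $s$ and $t$, and recurse on the pair $(u,v')$ using $\gcd(\abs u,\abs v)=\gcd(\abs u,\abs{v'})$. One minor slip in your final paragraph: the bound you actually get is $\abs{s_1}\geq \abs v - \gcd(\abs u,\abs v)$, not $\abs{s_1}\geq \abs v$, but since $\gcd(\abs u,\abs v)\leq \abs u < \abs v$ this is still positive and your argument goes through unchanged.
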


\begin{proof}
We may assume $\abs u \leq \abs v$.  
The assertion is trivial for $\abs u = 0$ or $\abs u = \abs v$. Hence we may assume $1 \leq \abs u < \abs v$.
Since $\gcd(\abs{u},\abs{v})\leq \abs{v}$, we have $v = uw$.
It remains to show $uw = wu$, because then $uv= u(uw) = u(wu) = (uw)u = vu$.
Since $\abs{s} \geq \abs{u} + \abs{v} - \gcd(\abs{u},\abs{v}) > \abs{u}$, we obtain $s \in uu\smallset{u,w}^*$. We have $t \in uw\smallset{u,w}^*$,  therefore $s' \in u\smallset{u,w}^*$ and $t' \in w\smallset{u,w}^*$ for the words $s',t'$ mit $s = u s'$ and $t = u t'$. 
Moreover $\gcd(\abs{u},\abs{v}) = \gcd(\abs{u},\abs{w})$
and $\abs v = \abs u + \abs w$, 
thus $s'$ und $t'$ have a common prefix of length $\abs{u} + \abs{w} - \gcd(\abs{u},\abs{w})$. 
By induction we conclude $uw = wu$ and hence the claim. 
The standard fact that commuting words $u$ and $v$ are powers of a common prefix is left as an easy exercise. 
\end{proof}

An \emph{interval question} for a given SLP is a an expression of type
\newcommand{\iq}[6]{#1[#2,#3] \overset{?}= #4[#5,#6]} 
$$\iq XijYk\ell.$$   
It is this type of interval questions is used e.g.{} in the proof of \prref{cor:maxpre}.
\newcommand{\siq}[5]{#1[#2,#3] \overset{?}= #4[#5]}   
\newcommand{\piq}[3]{#1[#2] \overset{?}= #3[#2]}  
\newcommand{\miq}[3]{#1[#2]_{\mathrm{pf}} \overset{?}= #3[#2]_{\mathrm{sf}}}  

The expression evaluates to \emph{true}  \IFF both,
$0 \leq j -i = \ell -k \leq \min\os{\abs X, \abs Y}$  and
$\eval(X)[i,j] = \eval(Y)[k,\ell]$.

An interval question is of \emph{standard type}, if it has the form
$X[i,j] \overset{?}= Y[0,\ell]$ which we abbreviate as $\siq XijY\ell$. It is
called a \emph{mixed question}, if it has the form $X[0,j] \overset{?}= Y[\abs Y - j,\abs Y]$, which we abbreviate as $\miq XjY$.
The meaning is that a prefix of length $j$ of $\eval(X)$ appears as a suffix
in $\eval(\ov Y)$. This explains the notation ``pf'' and ``sf''.
All mixed questions are of standard type.

\begin{lemma}\label{lem:finewilf}
Let $1 \leq p < q < j \leq \abs X$. Then the following three
mixed questions $\miq XjY$, $\miq X{j-p}Y$, $\miq X{j-q}Y$ evaluate to \emph{true}
\IFF the following two
mixed questions $\miq XjY$, $\miq X{j-g}Y$ evaluate to \emph{true}, where
$g=\ggt(p,q)$ is the greatest common divisor of $p$ and $q$.
\end{lemma}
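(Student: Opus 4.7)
My plan is to reduce the three mixed questions to combinatorial statements about periods of a single fixed word, and then invoke Fine--Wilf (\prref{thm:finewilf}).

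First I would observe that once $\miq XjY$ evaluates to \emph{true}, the prefix of $\eval(X)$ of length $j$ and the suffix of $\eval(Y)$ of length $j$ coincide; call this common word $w$, so $|w| = j$. Under this identification, for any $0 \leq k \leq j$ the mixed question $\miq X{k}Y$ is equivalent to
\[
w[0,k] \;=\; w[j-k,\,j].
\]
Indeed, the left-hand side is by definition the prefix of $\eval(X)$ of length $k$, which is $w[0,k]$, while the right-hand side is the suffix of $\eval(Y)$ of length $k$, which is $w[j-k,j]$. Setting $k = j-p$, $k = j-q$ and $k = j-g$ we see:
\[
\miq X{j-p}Y \iff w \text{ has period } p, \qquad
\miq X{j-q}Y \iff w \text{ has period } q,
\]
\[
\miq X{j-g}Y \iff w \text{ has period } g.
\]

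Next I would handle the two directions of the equivalence separately. The easy direction assumes $w$ has period $g$; since $g \mid p$ and $g \mid q$, a routine induction on the multiple (iterating $w[i]=w[i+g]$) shows $w$ has periods $p$ and $q$, so $\miq X{j-p}Y$ and $\miq X{j-q}Y$ both hold. For the converse, assuming $w$ has periods $p$ and $q$, let $u$ be the prefix of $w$ of length $p$ and $v$ the prefix of $w$ of length $q$. Then $w$ is a prefix of both $u^{\infty}$ and $v^{\infty}$, so in the setting of \prref{thm:finewilf} I take $s \in u\{u,v\}^{*}$ and $t \in v\{u,v\}^{*}$ both extending $w$, which share a common prefix of length $j$. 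This yields $uv = vu$, so $u$ and $v$ are powers of a common word of length $g$, and hence $g$ is a period of $w$, i.e.\ $\miq X{j-g}Y$ evaluates to \emph{true}.

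The main obstacle is the quantitative hypothesis of Fine--Wilf: the argument above requires $j \geq p+q-g$ so that the common prefix in \prref{thm:finewilf} is long enough. The stated bound $q<j$ alone is not sufficient in general; either the lemma is to be read with the implicit Fine--Wilf condition $j\geq p+q-g$ (as is natural in the intended downstream use in \prref{cor:maxpre}), or one has to bookkeep this inequality explicitly. Up to that assumption, the translation to periods and the appeal to \prref{thm:finewilf} complete the proof.
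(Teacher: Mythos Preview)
Your approach is exactly the one the paper intends: translate the mixed questions (under the anchor $\miq XjY$) into periodicity statements about the common word $w$ of length $j$, and then invoke \prref{thm:finewilf}. The paper's own proof is the single line ``This is a direct consequence of \prref{thm:finewilf},'' so you have supplied strictly more detail than the original.

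Your caveat about the hypothesis is well taken and worth recording. As literally stated, the condition $q<j$ is not enough to force period $g$ from periods $p$ and $q$; for instance $w=abacaba$ with $j=7$, $p=4$, $q=6$ has periods $4$ and $6$ but not $g=2$, so $\miq X{j-g}Y$ fails while the three questions on the left all hold. The intended reading is indeed with the Fine--Wilf bound $j\ge p+q-g$ in force (or, equivalently, one keeps track of this inequality when merging mixed questions). One small correction: the downstream use of the lemma is in \prref{prop:pland}, not \prref{cor:maxpre}; there the two stored mixed questions per pair are maintained together with the largest index $j$, which is precisely the bookkeeping that keeps the Fine--Wilf hypothesis available.
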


\begin{proof}
 This is direct consequence of \prref{thm:finewilf}.
 \end{proof}

\begin{theorem}[\cite{AlstrupBR00,GasieniecKPR96,hagenahdiss2000,pla94}]\label{thm:pland}
The following computation can be performed in polynomial time.
\begin{itemize}
\item Input: SLP $S$ and interval questions
$\iq {X_m}{i_m}{j_m}{Y_m}{k_m}{\ell_m}$ for $1 \leq m \leq s$.
\item Output: Those questions which evaluate to \emph{true}.
\end{itemize}
\end{theorem}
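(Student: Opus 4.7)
The plan is to reduce arbitrary interval questions to the more structured family of \emph{mixed questions} $\miq XjY$ for $X, Y \in \OO$, and then to compute a compact description of the true mixed questions for each pair $(X, Y)$ using Fine and Wilf. First, length mismatches are dispatched in polynomial time via \prref{prop:triv}: a question $\iq XijYk\ell$ is trivially false unless $j - i = \ell - k$. Assuming lengths match, I would descend through the productions of $X$ and $Y$ in parallel, splitting $[i,j]$ at the production boundaries of $X$ and $[k,\ell]$ at those of $Y$. Each input question thereby reduces to $\Oh(h(S))$ sub-questions, and the splits at concatenation points naturally produce mixed questions $\miq{X'}{j'}{Y'}$ for intermediate variables $X', Y' \in \OO$.

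Next, for each pair $(X, Y) \in \OO \times \OO$ I would compute the set $J_{X,Y} = \set{j \in \N}{\miq XjY \text{ evaluates to true}}$. The crucial structural property, extracted from \prref{thm:finewilf} and packaged in \prref{lem:finewilf}, is that whenever $j$, $j - p$, and $j - q$ all lie in $J_{X,Y}$ with $1 \leq p < q < j$, then $j - \ggt(p,q) \in J_{X,Y}$. Iterating this closure shows that $J_{X,Y}$ admits a compact description as a union of $\Oh(\log \min\os{\abs X, \abs Y})$ arithmetic progressions, each encodable in polynomial bits. This compactness is the crux of the polynomial-time bound.

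The sets $J_{X,Y}$ would then be computed by induction on $h(X) + h(Y)$. Base cases with a terminal production $X \to a$ or $Y \to a$ are immediate. For an inductive step with $X \to X_1 X_2$, the condition $\miq XjY$ splits according to whether $j \leq \abs{X_1}$ or $j > \abs{X_1}$: the first case is equivalent to $\miq{X_1}{j}{Y}$, while the second becomes a conjunction asserting that $\eval(X_1)$ matches a specific factor of $\eval(\ov Y)$ of length $\abs{X_1}$, and that a prefix of $\eval(X_2)$ of length $j - \abs{X_1}$ matches the adjacent suffix. Both sub-questions involve strictly smaller-height variables, modulo one further descent into the production of $Y$; taking intersections and unions of the compact representations yields $J_{X,Y}$ in polynomial time.

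The main obstacle will be maintaining the polynomial size bound on the descriptions of the $J_{X,Y}$ throughout the induction, since unions and intersections of arbitrary unions of arithmetic progressions can in general blow up in complexity. The crucial point is precisely \prref{lem:finewilf}: any three witnesses of a mixed question in the upper half of their range must fit into a single arithmetic progression of further witnesses, which collapses the would-be exponential combinatorial explosion of the naive recursion into controlled polynomial growth. Once the $J_{X,Y}$ are available, the initial reduction allows each of the $s$ input questions to be answered in polynomial time.
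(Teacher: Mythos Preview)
Your plan shares the key ingredients with the paper---reduction to mixed questions and the use of \prref{lem:finewilf}---but the architecture differs and your inductive step has a gap.

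The paper never computes the sets $J_{X,Y}$. Instead, after reducing (as you do) each interval question to $\Oh(h(S))$ standard questions, it processes the resulting list \emph{top-down} by the sum of heights $h(A)+h(B)$. A standard question $\siq AijBk$ of maximal height is expanded along the rule $A\to CD$, yielding at most one new standard question and at most one new mixed question $\miq B\ell C$. The crucial invariant, guaranteed by the top-down order, is that for each ordered pair $(B,C)$ at most two mixed questions $\miq B{\cdot}C$ are on the list at any moment; whenever a third would appear, \prref{lem:finewilf} collapses the three into two via a $\ggt$. An amortised ``euro'' accounting then bounds the total work by $\Oh((q+\Abs S^2)\,h(S))$. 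So \prref{lem:finewilf} is used online, to keep the list short, rather than to describe the global shape of $J_{X,Y}$.

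Your bottom-up induction on $h(X)+h(Y)$ breaks at the case $X\to X_1X_2$ with $j>\abs{X_1}$. There $\miq XjY$ splits into $\miq{X_2}{j-\abs{X_1}}{Y}$, which is a genuine $J$-query, together with the requirement that $\eval(X_1)$ occur at position $\abs Y-j$ inside $\eval(Y)$. The latter is a parametrised \emph{factor-occurrence} condition in $j$, not a mixed question, and is not supplied by any $J_{X',Y'}$ in your induction. ``One further descent into the production of $Y$'' does not dissolve it: if $Y\to Y_1Y_2$ and the putative occurrence of $\eval(X_1)$ lies entirely inside $\eval(Y_1)$ or $\eval(Y_2)$, you face the same factor-occurrence problem one level down, so the recursion does not bottom out in mixed questions alone. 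To make your scheme go through you would need a second family of data---the occurrence sets of $\eval(X)$ inside $\eval(Y)$ for all pairs---computed in tandem with the $J_{X,Y}$, plus a separate argument that their AP representations stay polynomially bounded under the required intersections. That is feasible (it is essentially compressed pattern matching), but it is a substantial missing piece; the paper's top-down device sidesteps it entirely.
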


\begin{proof}
 The proof follows from the next proposition.
\end{proof}

\begin{proposition}\label{prop:pland}
The following problem (involving a collection of $q$ interval questions) can be solved  in $\Oh((q + {\Abs S}^2)\cdot h(S))$ arithmetic steps.
\begin{itemize}
\item Input: SLP $S$ and interval questions
$\iq {X_p}{i_p}{j_p}{Y_p}{k_p}{\ell_p}$ for $1 \leq p \leq q$.
\item Problem: Do all  questions  evaluate to \emph{true}?
\end{itemize}
\end{proposition}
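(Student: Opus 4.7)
The plan is to process the $q$ interval questions by a top-down sweep through the SLP, reducing each question to simpler questions involving variables of strictly smaller height, while using \prref{lem:finewilf} to ensure that the number of pending questions associated with any single pair of variables never exceeds a constant.

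First I would reduce every general question $\iq X i j Y k \ell$ to a bounded number of standard questions $\siq {X'}{i'}{j'}{Y'}{\ell'}$; this is a cheap syntactic step using the involution to handle the case $k \neq 0$, and it is consistent with \prref{thm:pland}. Next I would reduce each standard question to at most two mixed questions $\miq X j Y$ at the next lower height. Concretely, if the outstanding query involves $X$ with production $X \to X_1 X_2$, then the requested factor of $\eval(X)$ either lies entirely in $\eval(X_1)$, lies entirely in $\eval(X_2)$, or straddles the cut. In the first two cases we obtain a single query on a variable of strictly smaller height; in the straddling case we get two queries, one of them naturally of mixed type (a suffix of $\eval(X_1)$ has to match a prefix of $\eval(Y[0,\ell])$) and one of standard type on $X_2$. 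Thus each reduction step produces $\Oh(1)$ successor queries, all at strictly smaller height on the left.

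The crucial point is to prevent the number of pending mixed queries from blowing up as the sweep progresses. I would group the active queries by the ordered pair $(X,Y)$ that appears in $\miq X j Y$. Whenever three distinct mixed queries $\miq X {j_1} Y$, $\miq X {j_2} Y$, $\miq X {j_3} Y$ with $j_1 > j_2 > j_3$ are simultaneously active for the same pair $(X,Y)$, \prref{lem:finewilf} allows me to replace them by just the two queries $\miq X {j_1} Y$ and $\miq X {j_1 - g} Y$ where $g = \ggt(j_1 - j_2,\, j_1 - j_3)$; the logical conjunction is preserved. Since there are at most $\abs{\OO}^2 \in \Oh(\Abs S ^2)$ pairs, pruning after each reduction step keeps the working set at any given height at size $\Oh(\Abs S^2)$, to which the initial $q$ queries contribute at most once. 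Together with the $\Oh(1)$ branching per query, this bounds the work per height level by $\Oh(q + \Abs S^2)$.

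Summing over the $h(S)$ height levels gives the desired bound $\Oh((q + \Abs S^2)\cdot h(S))$, with the base case being rules $X \to a$ where comparisons become trivial. The step I expect to be the main obstacle is maintaining the Fine--Wilf invariant cleanly across one sweep: when a mixed question $\miq X j Y$ is expanded using the production of $X$, the resulting sub-questions do not obviously share the same pair structure that the pruning requires, so one has to be careful to re-group the queries by the pair occurring after expansion and to verify that the periodic conclusion $\eval(Y)[0,\cdot] \in r^*$ pushed down by \prref{thm:finewilf} remains usable at the next level. Once this invariant is set up correctly, the counting argument and the $h(S)$-factor are routine.
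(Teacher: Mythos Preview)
Your approach is essentially the paper's: reduce to standard questions, expand the left variable via its rule, spawn at most one mixed and one standard sub-question, and keep the pool of mixed questions per pair bounded using \prref{lem:finewilf}. However, the accounting has a real gap.

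In the straddling case with $X\to X_1X_2$, the non-mixed sub-question is $Y[\ell',\ell]\overset{?}{=}X_2[0,m]$, i.e.\ $\siq{Y}{\ell'}{\ell}{X_2}{m}$: to remain of standard type the \emph{left} variable must be $Y$, not $X_2$. Hence your claim ``all at strictly smaller height on the left'' fails here, since $h(Y)$ may well exceed $h(X)$. What does strictly decrease is the sum $h(\text{left})+h(\text{right})$, and this is precisely the potential the paper uses: each question is given $h(A)+h(B)$ ``euros'', every reduction step costs one euro, and the budget transfers to the successors without deficit. Summing the initial budgets over the $\Oh(q+\abs{\OO}^2)$ questions gives the bound directly; there is no per-level count of the form you describe.

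Two further points. First, converting a general question $\iq{X}{i}{j}{Y}{k}{\ell}$ with $k\geq 1$ to standard form is not a one-shot involution trick: the paper descends through the rule $X\to CD$ repeatedly, costing $\Oh(h(S))$ per question and $\Oh(q\cdot h(S))$ in total. Second, the obstacle you flag --- keeping the Fine--Wilf invariant clean when a mixed question is itself expanded --- is exactly what the paper's two devices resolve: it seeds one trivially-true mixed question for every ordered pair at the outset, and it always processes a question of \emph{maximal} potential $h(A)+h(B)$. Top-down processing then guarantees that when a new $\miq{B}{\ell}{C}$ is created, the list already contains exactly two mixed questions for that pair (the seeds, possibly already merged), so a single application of \prref{lem:finewilf} restores the invariant. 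With these fixes your sketch becomes the paper's proof.
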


\begin{proof}
In a preprocessing phase we check that the requirements on indices are satisfied.
We also remove variables with $\abs A= 0$.
The number of arithmetic operations is in $\Oh(\Abs S)$ and can be ignored.

Now, we start the transformation process on the list of questions. In
the first phase we transform all interval questions into standard type $\siq AijB\ell$.
Consider a question $\iq AijBk\ell$ with $k\geq 1$, which is not standard. We may assume that the SLP contains a rule
$A \to CD$, because otherwise the question had standard type. Depending on the indices there are three
possibilities:
\begin{enumerate}
\item We can replace $\iq AijBk\ell$ by some  question
$\iq C{i}{j}Bk\ell$.
\item We can replace $\iq AijBk\ell$ by some  question
$\iq D{i'}{j'}Bk\ell$.
\item We can replace $\iq AijBk\ell$ by standard questions:
$\ov {B[k,m]} \overset{?}={\ov C}[k']$ and $\siq Bm\ell{D}{\ell'}$.
\end{enumerate}
After at most $\Oh(q \cdot h(S))$ steps we have produced a list of at most
$2q$ standard questions. Thus, without restriction, all questions 
are of standard type at the very beginning.

Next, for each pair $(A,B)$ we artificially introduce mixed questions $\miq A0B$ 
and $\miq A{\abs A}A$ (which of course evaluate to \emph{true}). 
Thus, the new number of questions is
 $Q= 2q + \abs{\OO} + \abs{\OO}^2$.
 Note that $\miq AiB$ is equivalent to
$\miq {\ov B}i{\ov A}$. Therefore, a pair of mixed questions $\miq AiB$ and $\miq AjB$
can be counted as $\miq AiB$ and $\miq {\ov B}j{\ov A}$.
In the next phases other mixed questions of type $\piq AiB$ will be generated. However, due to \prref{lem:finewilf} never more than two mixed questions need to be stored.
We now do the counting of arithmetic operations with respect to a global sum of
``euros'' which are distributed over several accounts. First,  each standard question
$\siq AijBk$ obtains an account with
$h(A) + h(B)$ euros.  The invariant is that every question on the list has always at least $h(A) + h(B)$ euros on its account. In order to do so we need initially $\Oh(Q \cdot h(S))$ euros.

Consider a standard  or mixed question $\siq Aij{B}{k}$ on our list, where the sum of heights $h(A)+ h(B)$ is maximal. If there is a rule $A \to a$ with $a \in \GG$, then we can evaluate
this question in at most $h(B)$ arithmetic operations, and then we remove it.
If the evaluation was \emph{false}, we return \emph{false} and stop.
Thus, we may assume that the SLP contains a rule
$A \to CD$. Depending on the indices there are again three
possibilities:
\begin{enumerate}[1.)]
\item We can replace $\siq Aij{B}{k}$ by some  standard question
$\siq Cij{B}{k}$.
\item We can replace $\siq Aij{B}{k}$ by some  standard question
$\siq D{i'}{j'}{B}{k}$.
\item We can replace $\siq Aij{B}{k}$ by one mixed and one standard question:
$\miq {B}\ell{C}$ and $\siq B\ell k{D}{m}$.
\end{enumerate}
Note that in all three cases the sum of heights decreased. The tricky observation is that exactly two  question of type $\miq {B}{\ell'}{C}$
and $\miq {\ov C }{\ell''}{\ov B}$ are on the list when replacing $\siq Aij{B}{k}$, because we work top-down according to the height. Thus, the only thing that happens is that some
$\miq {B}{\wt \ell}{C}$ is replaced by some $\miq {B}{m}{C}$, where
$m$ is computed according to \prref{lem:finewilf}. In all three possibilities we need only one euro to pay the of arithmetic operations, and the rest can be transferred to the new accounts without destroying the invariant\footnote{We count  a 
$\ggt$ computation on binary numbers of polynomial length as one arithmetic operation. But this  not essential because 
a more accurate amortized counting is possible. In any case it does not effect the polynomial time bound in \prref{thm:pland}.} If our list does not contain any
question anymore without that we encountered \emph{false}, then we can return
\emph{true}.
\end{proof}

\begin{remark}\label{rem:dobetter}
The time bound  in \prref{prop:pland} is not likely to be optimal. Better
time complexities might be achieved by applying recompression methods in
\cite{Jez12icalp,AlstrupBR00,MehlhornSU97}, see also \cite{Jez13dlt}.  We also refer to
\cite{Lohrey2012survey} for a recent survey on ``Algorithmics on SLP-compressed strings''.
\end{remark}

\begin{corollary}\label{cor:maxpre}
The following computation can be performed in polynomial time.
\begin{itemize}
\item Input: SLP $S$ and and variables $X,Y$.
\item Output: A number $p \in \N$ written in binary such that
the length of the longest common prefix of $\eval(X)$ and $\eval(y)$ has length $p$.
\end{itemize}
\end{corollary}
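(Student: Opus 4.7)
The plan is to determine $p$ by binary search over the range $[0,\min\{\abs X,\abs Y\}]$, using the polynomial-time decision procedure for interval questions provided by \prref{thm:pland}.

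First I would use \prref{prop:triv} to compute $\abs X$ and $\abs Y$, and then set $M = \min\{\abs X,\abs Y\}$. Both lengths are bounded by $2^{h(S)}$, so their binary representations---and that of $M$---have polynomial size in $\Abs S$. The output $p$ satisfies $0\le p\le M$, hence also admits a polynomial-size binary encoding, and $\log M$ is polynomial in $\Abs S$. This is the quantity that will bound the number of binary-search rounds.

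Next I would run the standard binary search on the monotone predicate $P(m) \equiv (\eval(X)[0,m] = \eval(Y)[0,m])$. Monotonicity is immediate: if $P(m)$ holds and $m' \le m$, then $P(m')$ holds; consequently $p$ is exactly the largest $m \le M$ with $P(m) = \emph{true}$. Maintain a pair $(\ell,r)$ with invariant $P(\ell)$ true and, if $r<M$, $P(r+1)$ false, initialized as $(\ell,r) = (0,M)$. At each round pick $m = \lceil (\ell+r)/2\rceil$ and decide the interval question $\iq X 0 m Y 0 m$ by \prref{thm:pland}; if it evaluates to \emph{true}, update $\ell \gets m$, otherwise update $r \gets m-1$. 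After $\Oh(\log M)$ rounds we reach $\ell = r = p$ and output $\ell$ in binary.

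There is essentially no obstacle: correctness follows from monotonicity, and the running time is polynomial in $\Abs S$ because both the number of rounds and the per-round cost are polynomial. The only point one must verify is that each query interval $[0,m]$ with $m\le M$ is syntactically admissible for both $X$ and $Y$, which holds by the choice of $M$. Thus the corollary is obtained as a direct binary-search wrapper around \prref{thm:pland}.
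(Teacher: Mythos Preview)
Your proposal is correct and follows essentially the same approach as the paper: both arguments perform a binary search on the prefix length, each round deciding a single interval question via \prref{thm:pland}, with the number of rounds bounded by $\Oh(h(S))$ (equivalently, by $\Oh(\log M)$ in your formulation). Your write-up is simply more explicit about computing $M$ via \prref{prop:triv} and about the monotonicity of the predicate, but the idea is identical.
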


\begin{proof}
 For $X \in \OO$ we have $\abs {\eval(X)} \leq 2^{h(X)}$, hence we can solve the problem by binary search invoking at most $h(X)$ calls to \prref{thm:pland} with $s = 1$.
\end{proof}

To finish the section let us go back to the situation of a free 
group $F(\Sig)$ and
 $\Gamma= \Sigma \cup \overline{\Sigma}$.
 Recall that for $w \in \Gamma^*$ we denote by  $\wh w$ denote the uniquely defined reduced word
such that $ w = \wh w \in  F(\Sig)$.

\begin{corollary}\label{cor:freered}
The following computation can be performed in polynomial time.
\begin{itemize}
\item Input: An SLP $S$ with constants in $\GG$. 
\item Output: An SPL $\wh S$ of size $\Oh(\Abs S \cdot h(S))$ such that
for every variable $X$ of $S$ there is a variable $\wh X$ of $\wh S$
with $\eval(\wh X) = \wh{\eval(X)}$. This means that $\wh X$ evaluates to the
reduced normal form of $\eval(X)$.
\end{itemize}
\end{corollary}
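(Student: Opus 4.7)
\emph{Proof plan.} The plan is to process the variables of $S$ in order of increasing height and to construct, for every $X\in\OO_+(S)$, a new variable $\wh X$ satisfying $\eval(\wh X)=\wh{\eval(X)}$. Since interval grammars handle prefixes and suffixes naturally, I would first build an interval grammar $\wh S'$ and then invoke \prref{thm:ig2slp} to turn it into an SLP.

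For a terminal rule $X\to a$ with $a\in\GG\cup\smallset{1}$, the letter is already reduced, so I would set $\wh X\to a$. For a rule $X\to YZ$, the induction will have produced $\wh Y$ and $\wh Z$ with reduced values $u:=\eval(\wh Y)$ and $v:=\eval(\wh Z)$. The heart of the step is to determine the length $k$ of the maximal free cancellation at the seam, i.e.{}~the largest $k$ with $u[\abs u-k,\abs u]=\ov{v[0,k]}$. Applying the involution to both sides rewrites this condition as $\eval(\ov{\wh Y})[0,k]=\eval(\wh Z)[0,k]$, so $k$ becomes the length of the longest common prefix of $\eval(\ov{\wh Y})$ and $\eval(\wh Z)$; by \prref{cor:maxpre} this is polynomial-time computable and returned in binary. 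I would then add the production
\[
\wh X\;\to\;\wh Y[0,\abs{\wh Y}-k]\;\wh Z[k,\abs{\wh Z}],
\]
a legal \ig rule because $0\leq k\leq\min\smallset{\abs{\wh Y},\abs{\wh Z}}$. Writing $u=u's$ and $v=\ov s v'$ with $\abs s=k$, the evaluation is $u'v'$; since $u$ and $v$ are reduced and $k$ is maximal, the last letter of $u'$ cannot be the inverse of the first letter of $v'$ (else the overlap at length $k+1$ would match, contradicting maximality), so $u'v'$ is reduced and equals $\wh{uv}=\wh{\eval(X)}$.

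The resulting interval grammar $\wh S'$ has one new rule per variable of $S$ and height bounded by $h(S)$. A final application of \prref{thm:ig2slp}, whose weight-reduction argument in fact yields an SLP of size $\Oh(h(S)\cdot\abs{\OO})\sse\Oh(\Abs S\cdot h(S))$ rather than the coarser $\Oh(\abs{\OO}^2)$, produces the required SLP $\wh S$. The main obstacle is the combinatorial reconciliation of the seam-match length $k$ with the number of actual free-group cancellation steps, together with its reformulation as a longest-common-prefix query accessible to \prref{cor:maxpre}; once this is set up, the remaining work, including all arithmetic on binary-coded length parameters, is routine bookkeeping on interval grammars.
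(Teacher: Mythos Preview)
Your proof is correct and follows essentially the same approach as the paper: both compute the cancellation length at the seam by reformulating it as the longest common prefix of $\eval(\ov{\wh Y})$ and $\eval(\wh Z)$ and invoke \prref{cor:maxpre}. The only packaging difference is that the paper extracts the required prefix of $\wh Y$ and suffix of $\wh Z$ directly as SLP variables (introducing at most $h(Y)+h(Z)$ fresh rules per step), whereas you write the single \ig rule $\wh X\to\wh Y[0,\abs{\wh Y}-k]\,\wh Z[k,\abs{\wh Z}]$ and batch-convert at the end via \prref{thm:ig2slp}; both routes give the same $\Oh(\Abs S\cdot h(S))$ bound. One small technicality: \prref{cor:maxpre} is stated for SLPs, so during your bottom-up construction you should either convert the already-built portion of the \ig to an SLP before each call, or simply remark that \prref{cor:maxpre} extends to \igs via \prref{thm:ig2slp}; either way this is routine and does not affect the argument.
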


\begin{proof}
 Consider a rule $X \to YZ$. By induction on the height we may assume that we have already generated variables $\wh Y$ and $\wh Z$ such that $\eval(\wh Y) = \wh{\eval(Y)}$ and $\eval(\wh Z) = \wh{\eval(Z)}$. In addition we may assume that
  $h(Y) = h(\wh Y) $ and $h(Z) = h(\wh Z)$.
 Using \prref{cor:maxpre} we calculate the length of the
 longest common prefix of $\ov{\eval(\wh Y )}$ and $\eval(\wh Z)$. Knowing the length it
 is straightforward how to introduce new variables $Y'$ and  $Z'$ such that $\wh{\eval(X)} = \eval(Y'Z')$. For this procedure we need at most  $h(Y) + h(Z)$ 
 new rules and additional variables.
 Thus, we can introduce another variable $\wh X$ and rule $\wh X \to Y'Z'$.
 This gives us the new SLP of size $\Oh(\Abs S \cdot h(S))$.
 \end{proof}
 
 The compressed word problem can be defined in arbitrary (finitely generated) monoids $M$. For that choose some finite generating set $\Sig$. 
 The input to the \emph{compressed word problem} over $M$ is given by two  SLPs with constants in $\Sig$ and axioms $A$ and $B$ resp.
 The question is whether or not $A$ and $B$ evaluate to the same element 
 in $M$. Changing the finite set of generators does not affect whether or 
not the compressed word problem can be solved in {\bf P }or {\bf NP}.

\begin{proposition}[\cite{schleimer08}]\label{prop:sch}
Let $M$ be a finitely generated monoid and $N$ be a finitely generated submonoid of the monoid of endomorphisms  $\End(M)$. There is a polynomial-time reduction of the
word problem of $N$ to the compressed word problem of $M$. 
\end{proposition}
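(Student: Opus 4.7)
The plan is to reuse, almost verbatim, the construction already recorded in \prref{ex:saul}. Fix once and for all a finite generating set $\Sig$ of $M$ and a finite generating set $A$ of $N\sse \End(M)$. The word problem for $N$ asks, given two words $u,v\in A^*$, whether they represent the same endomorphism of $M$; equivalently, whether $u(a) = v(a)$ as elements of $M$ for every $a\in \Sig$. Since $\Sig$ is a fixed finite set, this amounts to $\abs \Sig$ equality tests in $M$, and it is enough to solve each such test using one oracle call to the compressed word problem of $M$.

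Given an input word $w=\alp_1\cdots \alp_n\in A^*$, I would first apply the construction of \prref{ex:saul} to build, in time $\Oh(\abs \Sig\cdot n)$, a single SLP whose variables $A[i,a]$ (for $0\le i\le n$ and $a\in \Sig$) satisfy $\eval(A[i,a])=\alp_1\cdots\alp_i(a)$ in $M$. In particular, for each $a\in\Sig$ the variable $A[n,a]$ is an SLP representation of the element $w(a)\in M$, and the total size of the grammar is linear in $n$ (with the fixed constant $\abs \Sig$).

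Doing the same thing for the second input word $v\in A^*$ produces, for each $a\in \Sig$, a pair of SLPs with axioms representing $u(a)$ and $v(a)$ respectively. I would then invoke the compressed word problem oracle of $M$ once for each of the $\abs \Sig$ pairs; the answer to the word problem instance $u\stackrel?= v$ in $N$ is ``yes'' exactly when every oracle call returns ``yes''. Since $\abs \Sig$ is a constant (depending only on the fixed monoid $M$), the total number of oracle calls and the size of each query are both polynomial in $\abs u + \abs v$, which gives the desired polynomial-time reduction.

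The only real content of the argument is the SLP construction in \prref{ex:saul}, and once that is in place there is essentially no obstacle: correctness follows by induction on $i$ from the recursive definition $A[i,a]\to A[i-1,b_1]\cdots A[i-1,b_k]$ whenever $\alp_i(a)=b_1\cdots b_k$ in $M$, together with the terminal rules $A[0,a]\to a$, and the size bound is immediate from the fact that each $\alp_i(a)$ is a word of bounded length depending only on the (fixed) endomorphisms in $A$ and the (fixed) generators in $\Sig$.
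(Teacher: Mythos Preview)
Your proposal is correct and follows exactly the approach of the paper: the paper's entire proof is the single sentence ``The reduction is explained in \prref{ex:saul},'' and you have simply spelled out that reduction in detail.
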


\begin{proof}
 The reduction is explained in \prref{ex:saul}.
\end{proof}

\begin{proposition}[\cite{lohrey06siam}]\label{prop:comploh}
Let $F$ be a finitely generated free group. Then the
compressed word problem can be solved in polynomial time.
\end{proposition}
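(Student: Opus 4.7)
The plan is to reduce equality in $F(\Sig)$ to equality of reduced words, and then to equality of SLP-compressed words, using the tools already established in this section.

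First, let $S_A$ and $S_B$ be the two input SLPs with constants in $\GG = \Sig \cup \ov \Sig$ and axioms $A,B$. Since two words $u,v \in \GG^*$ represent the same element in $F(\Sig)$ \IFF $\wh u = \wh v$ as (reduced) words over $\GG$, the strategy is to first replace the given SLPs by SLPs that evaluate directly to the free-group normal forms. This is exactly the content of \prref{cor:freered}: applied to $S_A$ and $S_B$ (which may be joined into a single SLP by renaming variables) we obtain in polynomial time new SLPs with distinguished axioms $\wh A$ and $\wh B$ satisfying $\eval(\wh A) = \wh{\eval(A)}$ and $\eval(\wh B) = \wh{\eval(B)}$. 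Thus the original compressed word problem reduces to deciding whether the words $\eval(\wh A)$ and $\eval(\wh B)$ are equal as elements of $\GG^*$.

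Next, using \prref{prop:triv} we compute the lengths $\abs{\wh A}$ and $\abs{\wh B}$ in polynomial time. If $\abs{\wh A} \neq \abs{\wh B}$ we answer \emph{false}. Otherwise, setting $n = \abs{\wh A} = \abs{\wh B}$, we invoke \prref{thm:pland} on the single interval question
$$\wh A[0,n] \overset{?}{=} \wh B[0,n],$$
which runs in polynomial time and returns \emph{true} exactly when the two reduced words coincide, i.e., when $\eval(A) = \eval(B)$ in $F(\Sig)$. Chaining the three polynomial-time steps yields the claim.

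There is no real obstacle beyond assembling the two nontrivial ingredients already proved: the polynomial-time reduction of SLPs modulo free-group relations (\prref{cor:freered}), whose proof in turn relies on the polynomial-time algorithm for longest common prefixes via Plandowski's interval-question machinery (\prref{thm:pland} and \prref{cor:maxpre}). The only subtlety to notice is that after free reduction the two resulting SLPs must be merged into a common SLP (with disjoint variable sets but the same underlying partial order on variables) so that the final interval question makes sense; this is a routine renaming and does not change the polynomial-time bound.
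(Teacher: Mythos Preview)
Your proof is correct and follows essentially the same approach as the paper: both rely on \prref{cor:freered} as the key ingredient to pass to reduced normal forms, after which the check is routine. The only cosmetic difference is that the paper combines the two inputs into a single variable $X$ (via $X \to A\,\ov B$), applies \prref{cor:freered}, and then tests whether $\wh X$ has length~$0$, whereas you reduce $A$ and $B$ separately and compare $\wh A$ with $\wh B$ via an interval question; neither variant offers any real advantage over the other.
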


\begin{proof}
 Compute $\wh S$ according to \prref{cor:freered} and check that $\wh X$ evaluates to $1$.
\end{proof}

\prref{prop:sch} and \prref{prop:comploh} show that the word problem of the automorphism group of
finitely generated free groups  can be decided in polynomial time \cite{schleimer08}, since 
their \auto group is finitely generated.  
More generally, the same result holds for finitely generated right-angled Artin groups, see \cite{LohreyS07} for details.
\section{Word equations with constraints}

As above we let $\OO$ be a set of variables and $\GG$ is used as an alphabet of 
constants. 
A \emph{word equation} is written as $L=R$ where $L,R \in \OO^*$, 
a \emph{constraint} is written as  $X \in \cC$ where 
$X \in \OO_+$ and $\cC\sse \GG^*$. 
A \emph{Boolean formula of equations with constraints} $\cS$ is a 
Boolean formula where the atomic propositions are either word equations 
$L=R$ or  
constraints $X \in \cC_j$.  

A \emph{solution} of $\cS$  is a \morph $\sig: \OO^* \to \GG^*$ (given by mapping  $\sig: \OO_+ \to \GG^*$) such that the Boolean formula evaluates to ``true'', if we 
substitute the atomic propositions by the corresponding truth values 
$\sig(L)= \sig(R)$ 
and $\sig(X) \in \cC$. 

A \emph{system of equations with constraints} is simply a conjunction of 
atomic propositions. Making non-deterministic guesses the existence of a 
solution of a Boolean formula can be reduced to check the existence of a 
solution for a system of equations. Since we allow constraints we may replace 
inequalities by constraints. For example, if we consider equations over a group $G$, 
an inequality $L \neq R$ can be replaced by the conjunction $L = RX \wedge X \in G \sm \os{1}$, where 
$X$ is a fresh variable. 
Thus, frequently it is enough to consider systems of equations with constraints. Moreover, we 
do not need constants. 
A constant $a \in \GG$ is replaced by a variable $A$  and the corresponding constraint $A \in \os a$. 

\subsection{Free intervals}\label{sec:SectionFreeIntervals}
For the rest of the section we work with a fixed system $\cS$ and a fixed solution 
$\sig$. In \ref{sec:gen} we will define a ``generic solution'' specified by $\sig$, and we show that it can be compressed by \ig{}s. 
We write $L = X_1 \cdots X_g$ and $R = X_{g+1} \cdots X_d$ with
$X_i \in \OO$ for $1 \leq i \leq d$. Clearly, $\sig(L) = \sig (R)$. 

For a word $w \in \Gamma^*$ we call $\os{0, \ldots, |w|}$ its set of
{\em positions\/}. The idea is that letters of $w$ occur between 
positions. For positions $\alp, \bet$ 
we call  $[\alpha, \beta]$ an \emph{interval}.
If
$0 \leq \alpha \leq \beta \leq m$, then it corresponds to the  factor $w[\alpha,\beta]$.
The  involution on  intervals is defined by   $\ov {[\alpha,\beta]} = [\beta,\alpha]$. Accordingly, we define
$w[\beta,\alpha]=\overline{w[\alpha,\beta]}$.
 An interval
 $[\alpha,\beta]$ is called  \emph{positive}, if $\alp < \bet$.

The factorization $w= \sig(X_1) \cdots \sig(X_g) = \sig(X_{g+1}) \cdots \sig(X_d)$ 
along the given solution $\sig$ ``cuts'' the word $w$ into pieces. To make this formal, we define for each $0 \leq i \leq d$ positions
$\leftp(i)$ and $\rightp(i)$ such that $\sigma(X_i)$ starts in $w$ at the left
position $\leftp(i)$ and it ends at the right position $\rightp(i)$.
Each such position is called a \emph{cut}.
Positions $0$ and $m$ are cuts and there are at most  $d$ cuts.
Clearly, if $X_i = X_j= \ov {X_k}$, then
$$w[\leftp(i), \rightp(i)] = w[\leftp(j), \rightp(j)]= w[\rightp(k),\leftp(k)].$$

Next, we are going to
define an equivalence relation $\approx$ on the set of intervals of $w$.
For that we start with a pair $(i, j)$ such that $i,j \in \os{1, \ldots, d}$ where
$X_i = X_j$ or $X_i = \overline{X_j}$. For all
$\mu, \nu \in \os{0, \ldots, \rightp(i)-\leftp(i)}$ we define a relation
between intervals $\sim$ by:

\begin{eqnarray*}
 {[}\leftp(i)+\mu, \leftp(i)+\nu{]} & \sim &
 {[}\leftp(j)+\mu, \leftp(j)+\nu{]}, \mathrm{\,if\,} X_i = X_j, \\
 {[}\leftp(i)+\mu, \leftp(i)+\nu{]} & \sim &
 {[}\rightp(j)-\mu, \rightp(j)-\nu {]}, \mathrm{\,if\,} X_i = \overline{X_j}.
\end{eqnarray*}
 By $\approx$ we denote the
reflexive and transitive closure of $\sim$. Then $\approx$ is an equivalence
relation and
$[\alpha, \beta] \approx [\alpha', \beta']$ implies both,
$[\beta, \alpha] \approx [\beta', \alpha']$ and
$w[\alpha, \beta] = w[\alpha', \beta']$. In particular, the mapping 
$[\alpha, \beta] \mapsto w[\alpha, \beta]$ {}from 
pairs of positions to $\GG^*$ is defined on equivalence classes.

\begin{definition}\label{def:freeint}
An interval
$[\alpha, \beta]$ is called  {\em free\/}, if, whenever
$[\alpha, \beta] \approx [\alpha', \beta']$,  then there is no cut
$\gamma$ with
$\min\{\alpha', \beta'\} < \gamma < \max\{\alpha', \beta'\}$.
\end{definition}

Clearly, the
set of free intervals is closed under involution, i.e.,
$[\alpha, \beta]$ is free if and only if $[\beta, \alpha]$ is free. It is also
clear that $[\alpha, \beta]$ is free if  $|\beta-\alpha| \leq 1$.

Free intervals correspond to (long) factors in the solution which are
not split to by any cut.  If the only constraints were constants, then
free intervals of length greater than $1$ could be collapsed, and therefore free intervals of length greater than $1$ 
do not appear in any solution of minimal length.
However, in order to satisfy constraints long free intervals may become important.

\begin{example}\label{ex-free}
\newcommand{\blubb}[1]{\phantom{{}#1{}}}
Consider the following equation where  variables
$A$ and $B$ are constrained as constants by $A \in \os a$ and $B \in \os b$:

\[
   A X B \ox\,  \ov A  = Y \ov B  Y \ov A  B \oy.
\]
A possible solution in reduced words is $\sig(X) = bcb \ov c \ov b \ov b abc $, 
$\sig(Y) = abcb \ov c \ov b $ 
  \begin{align*}
    \rlap{\ensuremath{{}
    \blubb{\stackrel{0}{|}} \underbrace{\!\blubb{a \stackrel{1}{|} {}{\blue b c}
      \stackrel{3}{|}b \stackrel{4}{|}{\blue \; \oc \ob}}\!}_Y
      \blubb{\stackrel{6}{|} \ob \stackrel{7}{|}} \underbrace{\!\blubb{a \stackrel{8}{|}{\blue b
        c}{} \stackrel{10}{|} b \stackrel{11}{|} {}{\blue
        \oc \ob}}\!}_Y \blubb{\stackrel{13}{|} \oa \stackrel{14}{|} b
      \stackrel{15}{|}} \underbrace{\blubb{{\blue b c\;}\stackrel{17}{|} \ov b
      \stackrel{18}{|}{\blue \oc \ob}{} \stackrel{20}{|} \oa }}_{\ov Y}\blubb{\stackrel{21}{|}}
  {}}}
    \stackrel{0}{ |} a \stackrel{1}{|} \overbrace{{\blue b c}
      \stackrel{3}{|}b \stackrel{4}{|}{\blue \; \oc \ob}
      \stackrel{6}{|} \ob \stackrel{7}{|} a \stackrel{8}{|}{\blue b
        c}}^{X} \stackrel{10}{|} b \stackrel{11}{|} \overbrace{{\blue
        \oc \ob} \stackrel{13}{|} \oa \stackrel{14}{|} b
      \stackrel{15}{|} {\blue b c\;}\stackrel{17}{|} \ov b
      \stackrel{18}{|}{\blue \oc \ob}}^{\ox} \stackrel{20}{|} \oa \stackrel{21}{|} 
  \end{align*}
Cuts are  the eleven positions 
$0$, $1$, $6$, $7$, $10$, $11$, $13$, $14$, $15$, $20$, and $21$.
 Factors between 
 vertical bars correspond to free intervals. 
 There are three classes of free intervals, two of them result from the 
 constants $A = a$ and $B=b$. 
 There is only one equivalence class
of length
longer than 1 (up to involution), which is given by
$[20,18]\sim [1,3] \sim [8,10] \sim [20,18] \sim [13,11] \sim [15, 17] \sim [4,6]$. The solution $\sig$ says $\sig(Y)[1,3] = bc$. In principle, we can replace $\sig(Y)[1,3]$ by any other word, but the corresponding solution might be not reduced. For example, if we changed  $\sig(Y)[1,3]$ to the empty word, the resulting solution would be not reduced.  
\end{example}

\begin{definition}\label{def:mfi}
  A free interval $[\alpha, \beta]$ is called
  {\em maximal free}, if there is no free interval $[\alpha',
  \beta']$ such that both, $\alpha' \leq \min\{\alpha, \beta\}  \leq
  \max\{\alpha, \beta\}\leq \beta'$ and $|\beta-\alpha| < \beta'-\alpha'$.
\end{definition}
The following observation states  an important property of maximal free intervals.
\begin{proposition}[\cite{dgh05IC}] \label{prop:freecut}
Let $[\alpha, \beta]$ be a maximal free interval. Then there are intervals
$[\gamma, \delta]$ and $[\gamma', \delta']$ such that
$[\alpha, \beta] \approx [\gamma, \delta] \approx [\gamma', \delta']$ where
$\gamma$ and $\delta'$ are cuts.
\end{proposition}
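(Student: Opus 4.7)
The plan is to find the two equivalent intervals $[\gamma,\delta]$ and $[\gamma',\delta']$ separately by two symmetric arguments: for the first we extend $[\alpha,\beta]$ by one position on the left, for the second by one position on the right. Without loss of generality $\alpha < \beta$ (the case of a single position is vacuous, and the reverse-oriented case follows from the closure of free intervals under involution). I describe only how to find $[\gamma,\delta]$ with $\gamma$ a cut; the symmetric construction of $[\gamma',\delta']$ with $\delta'$ a cut is entirely analogous.

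If $\alpha$ is itself a cut we simply set $[\gamma,\delta] = [\alpha,\beta]$. Otherwise $\alpha \geq 1$, since position $0$ is always a cut, and so we may consider the one-step extension $[\alpha-1,\beta]$. Because $[\alpha,\beta]$ is maximal free and $[\alpha-1,\beta]$ is strictly longer while still containing it, the extension $[\alpha-1,\beta]$ cannot itself be free. Applying \prref{def:freeint}, there exists an equivalent interval, which I write in positive form as $[\alpha'',\beta''] \approx [\alpha-1,\beta]$, together with a cut $\gamma$ satisfying $\alpha'' < \gamma < \beta''$.

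The main step is then to pull $\gamma$ back to the sub-interval $[\alpha,\beta] \subseteq [\alpha-1,\beta]$. Each elementary step of $\sim$ is a length-preserving bijection of positions between two occurrences of a variable; it preserves orientation if $X_i=X_j$ and reverses orientation if $X_i=\overline{X_j}$. Composing the chain realising $[\alpha-1,\beta] \approx [\alpha'',\beta'']$ therefore yields a single length-preserving bijection on positions, globally either orientation-preserving or orientation-reversing. Restricting this bijection to the sub-interval $[\alpha,\beta]$, that is, removing the leftmost unit segment $[\alpha-1,\alpha]$, gives either $[\alpha,\beta] \approx [\alpha''+1,\beta'']$ in the preserving case or $[\alpha,\beta] \approx [\beta''-1,\alpha'']$ in the reversing case. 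In each case I distinguish two possibilities for $\gamma$: either $\gamma$ coincides with the endpoint that was cut off, namely $\alpha''+1$ or $\beta''-1$ respectively, in which case this equivalent interval has its left endpoint at the cut $\gamma$ and we are done; or $\gamma$ lies strictly between the endpoints of the equivalent interval, contradicting the freeness of $[\alpha,\beta]$ via \prref{def:freeint}.

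The delicate point that I expect to be the main obstacle is the orientation bookkeeping in the third paragraph: one must make the length-preserving bijection on positions explicit along the chain of $\sim$-steps and verify carefully that the unit segment on the left of $[\alpha-1,\beta]$ indeed maps to one of the unit segments $[\alpha'',\alpha''+1]$ or $[\beta''-1,\beta'']$, with no other possibility. Once this is set up cleanly, the combinatorial contradiction against freeness is immediate, and the symmetric argument on the right produces $[\gamma',\delta']$.
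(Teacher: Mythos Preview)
Your proposal is correct and follows essentially the same approach as the paper: extend $[\alpha,\beta]$ by one position to $[\alpha-1,\beta]$, use maximality to get a cut inside some equivalent interval, and then use freeness of $[\alpha,\beta]$ to force that cut to sit exactly at the image of position $\alpha$. The paper compresses your orientation case analysis into the single assertion $|\gamma-\alpha'|=1$ and the phrase ``a simple reflection shows'', but the underlying argument is identical.
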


\begin{proof}
By symmetry we may assume that $\alpha < \beta$. We show the existence of $[\gamma, \delta]$ where
$[\alpha, \beta] \approx [\gamma, \delta]$ and $\gamma$ is a cut. (The
existence of $[\gamma', \delta']$ where
$[\alpha, \beta] \approx [\gamma', \delta']$ and $\delta'$ is a cut follows
analogously.)

If $\alpha=0$, then $\alpha$ is a cut and we can choose
$[\alpha, \beta] = [\gamma, \delta]$. Hence let $1 \leq \alpha$ and consider the positive interval
$[\alpha-1,\beta]$. Then, for some cut $\gamma$ we have
$[\alpha-1, \beta] \approx [\alpha', \del]$ with $\min \{\alpha',
\del\} < \gamma < \max\{\alpha', \del\}$ and $|\gamma-\alpha'|=1$. A
simple reflection shows that we have
$[\alpha-1, \alpha] \approx [\alpha',\gamma]$ and
$[\alpha, \beta] \approx [\gamma, \del]$. Hence the claim.
\end{proof}

\begin{corollary}[\cite{dgh05IC}, Prop.{} 42]\label{cor:freeints}
Let $\wt{\GG}$ be the set of equivalence classes of
maximal free intervals. Then
$\wt{\GG}$ is closed under involution  and it has at most $2d-2$ elements. \end{corollary}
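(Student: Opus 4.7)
The plan is to establish the two claims separately: closure of $\wt{\GG}$ under involution, and the cardinality bound $\abs{\wt{\GG}}\le 2d-2$.

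For closure, I would observe that both the property of being free (\prref{def:freeint}) and that of being maximal free (\prref{def:mfi}) are symmetric in the two endpoints $\alp,\bet$ of an interval; the involution $[\alp,\bet]\mapsto[\bet,\alp]$ therefore preserves maximal freeness. Since $\approx$ commutes with involution (as noted right after \prref{def:freeint}), the assignment $C\mapsto\ov{C}=\oneset{[\bet,\alp]\gst[\alp,\bet]\in C}$ maps equivalence classes of maximal free intervals to equivalence classes of maximal free intervals, so $\ov{C}\in\wt{\GG}$ for every $C\in\wt{\GG}$.

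For the cardinality bound the decisive observation is that the total number of cuts is at most $d$: within $L=X_1\cdots X_g$ consecutive variable boundaries coincide ($\rightp(i)=\leftp(i+1)$), giving $g+1$ cuts, $R$ analogously contributes $d-g+1$, and the two positions $0$ and $m$ are shared. Next, at every cut $\gam$ at most one positive maximal free interval can start: if $[\gam,\del_1]$ and $[\gam,\del_2]$ were both positive and maximal free with $\del_1<\del_2$, then $[\gam,\del_2]$ would contradict the maximality of $[\gam,\del_1]$; the negative case is symmetric. Since all positions lie in $\os{0,\ldots,m}$, no negative maximal free interval can start at the cut $0$ and no positive one at the cut $m$. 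Summing yields at most $2(d-2)+2=2d-2$ maximal free intervals whose left endpoint is a cut.

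To conclude I would invoke \prref{prop:freecut}: every maximal free interval is $\approx$-equivalent to some $[\gam,\del]$ with $\gam$ a cut, and since $\approx$ preserves freeness and maximality this representative is itself maximal free. Hence every class $C\in\wt{\GG}$ contains at least one interval from the collection of size at most $2d-2$ enumerated above, and because distinct classes are disjoint this gives $\abs{\wt{\GG}}\le 2d-2$. The only subtlety worth flagging, which is cheap to dispatch, is ruling out maximal free intervals of length zero: as soon as $m\ge 1$, any single position $\alp$ sits inside the length-one free interval $[\alp-1,\alp]$ or $[\alp,\alp+1]$, so a zero-length interval cannot be maximal.
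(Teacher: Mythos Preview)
Your proof is correct and follows essentially the same route as the paper's: invoke \prref{prop:freecut} to get a representative with first coordinate a cut, then count such representatives using uniqueness (by maximality) and the exclusion of cut $m$ for positive intervals and cut $0$ for negative ones. Your write-up is in fact a bit more explicit than the paper's --- you spell out the cut count, the arithmetic $2(d-2)+2$, the fact that $\approx$ preserves maximal-freeness (which the paper uses silently when it writes ``we may assume that $\alpha$ is a cut''), and you dispose of the degenerate length-zero case --- but the underlying argument is the same.
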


\begin{proof}
Let $[\alpha,\beta]$ be a maximal free interval. Then $[\beta, \alpha]$ is a maximal free interval
by definition. Hence $\wt{\GG}$ is closed under involution.
By \prref{prop:freecut} we may
assume that $\alpha$ is a cut. Say $\alpha<\beta$. Then $\alpha \neq m$
and there is no other maximal free interval $[\alpha, \beta']$ with
$\alpha<\beta'$ because of maximality. Hence there are at most $d-1$
such intervals $[\alpha, \beta]$. Symmetrically, there are at most $d-1$
maximal free intervals $[\alpha, \beta]$ where $\beta<\alpha$ and $\alpha$
is a cut.
\end{proof}
There are two types of maximal free intervals which play a quite different role. 
Those of length $1$ can be viewed as fixed of constants whereas maximal free intervals
of length greater than $1$ are specified by words which, without the presence of constraints, can be replaced be empty words in order to shorten the length of a solution.

\subsection{Generic solutions}\label{sec:gen}
In an algebraic setting the situation is now as follows. Let $X\in \OO$, we may assume that $X$ appears in the equation $L=R$. Hence
$\sig(X)$ is a factor of $w= \sig(L) = \sig(R)$. The word $w$ factorizes as a product
$w[\alpha_0,\alp_1] \cdots w[\alpha_{\ell-1},\alp_\ell]$, where
$[\alpha_0,\alp_1] \lds [\alpha_{\ell-1},\alp_\ell]$ are  maximal free intervals.
We may read this as a factorization in a word of length $\ell$ over $\wt{\GG}$.
Thus, the solution $\sig$ defines a mapping
\begin{equation}\label{eq:facsigdef}
\wt{\sig}: \OO_+ \to \wt{\GG}^*.
\end{equation}
Now, using the mapping $\oo: \wt{\GG}^* \to \GG^* $ defined above by $[\alp,\bet] \mapsto w[\alp,\bet]$ we obtain the following factorization:
\begin{equation*}\label{eq:facsig}
\sig: \OO_+ \overset{\wt{\sig}}{\longrightarrow} \wt{\GG}^* \overset{\oo}{\longrightarrow} \GG^*.
\end{equation*}
The mapping  $\wt{\sig}: \OO_+ \to \wt{\GG}^*$ is called the \emph{generic solution} specified by $\sig$. 

If $\oo': \wt \GG \to \GG^* $
is any mapping which is compatible with the involution such that
$\oo'(\wt \sig(X_j)) \in \cC_j$ for all $j$, then 
the  \morph $\oo' \circ \wt{\sig}: \OO^* \to \GG^*$ is another solution.
The  following result is closely related to  \cite{pr98icalp}.
 
\begin{theorem}\label{thm:compall}
Let  $L_i= R_i$ be a system of equations with
$L_i, R_i \in {\OO}^*$ where $1 \leq i \leq k$ and
let $\sig: \OO_+ \to \GG^*$ be  any solution. Let $d = \sum_{i=1}^k \abs{L_iR_i}$ be the denotational length,
$\wt{\sig}: \OO_+ \to \wt{\GG}^*$ the generic solution  as defined in~(\ref{eq:facsigdef}),  $\wt{N} = \abs{\wt{\sig}(L)}$ its length.

Then there is an SLP $S$ of size $\Oh(d^2 \cdot \log^2\wt{N})$  such that
each $X\in \OO_+$ appears also as variable in $S$ and satisfies $\eval(X) = \wt{\sig}(X)$.
\end{theorem}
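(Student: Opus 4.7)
The plan is to build an \ig of size $\Oh(d\log \wt N)$ encoding the generic solution and then invoke \prref{thm:ig2slp} to convert it to an SLP of size $\Oh((d\log\wt N)^2)=\Oh(d^2\log^2\wt N)$. Set $W = \wt\sigma(L) \in \wt\GG^*$, so $|W|=\wt N$. The data of the equation provides at most $d$ \emph{cuts} in $W$, namely the positions $\leftp(i),\rightp(i)$ for $1\leq i\leq d$, and each $\wt\sigma(X_j)$ is simply the factor $W[\leftp(j),\rightp(j)]$. So it suffices to compress cut-to-cut factors of $W$, by exhibiting for each logarithmic scale a small ``library'' of canonical factors that can be referenced by interval operations.

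For each scale $k=0,1,\dots,\lceil\log_2\wt N\rceil$, let $\cF_k$ be the set of intervals $[c,\min\{c+2^k,\wt N\}]$ where $c$ ranges over cuts. Then $|\cF_k|\leq d$. The combinatorial heart of the argument is the claim that every factor $W[q,q+2^k]$ is equal to $W[c,c+2^k]$ for some cut $c$, so every length-$2^k$ window of $W$ can be addressed by referring into some canonical interval from $\cF_k$. Given this claim I introduce one IG variable $V_c^{(k)}$ per interval in $\cF_k$, and for $k\geq 1$ a rule of the form $V_c^{(k)}\to V_{c'}^{(k-1)}[\alpha,\beta]\,V_{c''}^{(k-1)}[\gamma,\delta]$ obtained by writing the length-$2^k$ factor at $c$ as the concatenation of its two length-$2^{k-1}$ halves, each of which is matched to a canonical level-$(k-1)$ occurrence by the claim. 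The resulting \ig has $\Oh(d\log\wt N)$ variables and rules. Finally, for each $X\in\OO_+$, I express $\wt\sigma(X)=W[\leftp(X),\rightp(X)]$ by splitting the interval $[\leftp(X),\rightp(X)]$ into $\Oh(\log\wt N)$ pieces whose lengths are powers of $2$ in a standard binary decomposition; each piece is realised as a prefix or suffix of some $V_c^{(k)}$ using IG interval operations. Since there are at most $d$ variables $X$, this adds $\Oh(d\log \wt N)$ further IG variables. Applying \prref{thm:ig2slp} to the resulting \ig of size $\Oh(d\log\wt N)$ produces the desired SLP of size $\Oh(d^2\log^2\wt N)$ in which each $X\in\OO_+$ appears as a variable evaluating to $\wt\sigma(X)$.

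The principal obstacle is the combinatorial claim that every length-$2^k$ factor of $W$ coincides with a canonical cut-occurrence. The naive reason a factor $W[q,q+2^k]$ is ``pinned'' to a cut is the equation itself: positions far from any cut lie inside a single $\wt\sigma(X_i)$, so via the equivalence $\approx$ they are also found at the corresponding position inside every other occurrence of $X_i$ or $\ov{X_i}$, which by \prref{prop:freecut} can be shifted until it touches a cut. The delicate case is when $W[q,q+2^k]$ recurs many times within a small window, in which case the shifting argument alone does not immediately place a copy at a cut; here one applies Fine and Wilf (\prref{thm:finewilf}) to the overlapping recurrences to extract a short period, and then uses this period together with cut-coherence of $\approx$ (which follows from \prref{prop:freecut}) to relocate one occurrence onto a cut, absorbing the alignment discrepancy by a prefix/suffix operation that is supplied by the \ig formalism. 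Careful handling of the boundary cases (factors truncated at the end of $W$, and the initial scale $k=0$ where the ``letters'' are themselves maximal free intervals over $\wt\GG$) completes the proof.
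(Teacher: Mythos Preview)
Your overall architecture---build an \ig of size $\Oh(d\log\wt N)$ out of ``canonical'' windows at dyadic scales, then apply \prref{thm:ig2slp}---is the same as the paper's. The gap is in the combinatorial claim at the heart of your construction.

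You assert that every factor $W[q,q+2^k]$ equals $W[c,c+2^k]$ for some cut $c$. This is false as stated, and your justification does not establish it. What the structure of the generic solution actually gives is weaker: since every interval of length at least $2$ is \emph{non-free} (all maximal free intervals have length $1$ over $\wt\GG$), the interval $[q,q+2^k]$ has an $\approx$-equivalent copy $[\alpha',\beta']$ containing a cut $\gamma$ \emph{strictly in its interior}, i.e.\ $\alpha'<\gamma<\beta'$. This does \emph{not} say that the copy starts at a cut; it says some cut lies somewhere inside. Consequently the right half $W[c+2^{k-1},c+2^k]$ need not equal any $W[c'',c''+2^{k-1}]$, and since your $V^{(k-1)}_{c''}$ has length exactly $2^{k-1}$, the interval operation $V^{(k-1)}_{c''}[\alpha,\beta]$ cannot help---you would need the full variable. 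Your appeal to \prref{prop:freecut} is also a misapplication: that proposition concerns \emph{maximal free} intervals (length $1$ here), not arbitrary length-$2^k$ intervals, and the Fine--Wilf aside does not repair the mismatch.

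The fix, which is precisely what the paper does, is to make the canonical windows \emph{two-sided}: for each cut $\gamma$ and each scale $\lambda$ introduce a variable $C_{\gamma,\lambda}$ evaluating to $W[\gamma-2^\lambda,\gamma+2^\lambda]$ (truncated at the ends of $W$). The rule for $C_{\gamma,\lambda}$ writes it as a left wing of length $2^{\lambda-1}$, the middle $C_{\gamma,\lambda-1}$, and a right wing of length $2^{\lambda-1}$. Each wing, being non-free, has an $\approx$-equivalent copy with a cut $\delta$ in its interior, and therefore sits \emph{inside} the length-$2^\lambda$ window $C_{\delta,\lambda-1}$; now the interval operation of the \ig genuinely extracts it as a proper factor. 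Finally, since each $\leftp(i)$ is already a cut, you get each $X$ by a single chain rule $X\to C_{\leftp(i),\lambda}[0,|X|]$ at the top scale---your binary decomposition of $[\leftp(X),\rightp(X)]$ into $\Oh(\log\wt N)$ pieces is unnecessary.
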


\begin{proof}
For the purpose of the proof we may assume that $\wt\GG = \GG$ and $\wt \sig =\sig$.
We continue with the notation of above. Hence $w= \sig(L)= \sig(X_1 \cdots X_g)
= \sig(X_{g+1} \cdots X_d)$. Since $\sig$ is a generic solution, we know that all maximal free intervals have length $1$. Therefore we do not need to compress words which correspond to long free intervals.

For all cuts $\gam$ and all $\lam\in \N$ with $2^{\lam} < 2m$ we introduce a new variable $C_{\gam\lam}$ and its dual $\ov{C_{\gam\lam}}$.  The idea is that
$C_{\gam\lam}$ evaluates to the word $w[\mu, \nu]$ where
$\mu = \max\os{0,\gam - 2^{\lam}}$ and $\nu = \min\os{m,\gam + 2^{\lam}}$.

 For $\lam =0$ we have  $\mu, \nu\in \os{\gam-1,\gam,\gam +1}$.
 The interval
 $[\mu, \nu]$ corresponds to a word $u_\gam= w[\mu, \nu]\in \GG^*$ with $\abs {u_\gam} \leq 2$. In this case we introduce a rule $C_{\gam,0} \to u_\gam$.

 Now, if $\lam \geq 1$, then we begin with  an auxiliary rule
 \begin{equation}\label{otto}
C_{\gam,0} \to [\mu,\nu]C_{\gam,\lam-1}[\mu',\nu'].
\end{equation}
 Here:
 $$\begin{array}{rlllll}
\mu &= \max\os{0,\gam - 2^{\lam}}, &&
\nu &= \max\os{0,\gam - 2^{\lam-1}},\\
\mu' &= \min\os{m,\gam + 2^{\lam-1}},&&
\nu' &= \min\os{m,\gam + 2^{\lam}}.
\end{array}
$$
Without restriction we have $\mu < \nu$ and $\mu' < \nu'$.
Consider the interval $[\mu,\nu]$. There are two cases.

In the first case
$\mu -\nu = 1$. Then  $w[\mu,\nu]$ is a letter of $\GG$. In this case,  we simply substitute in (\ref{otto}) the expression $[\mu,\nu]$ by that letter. Analogously, we deal with $[\mu',\nu']$, if this is a free interval.

In the second case $\mu -\nu \geq 2$ and $[\mu,\nu]$ is not free. Then however there exists a cut $\del$
and (by symmetry and duality) $w[\mu,\nu]$ becomes the factor of some word
$\eval(C_{\del, \lam-1})[\alp, \bet]$ for suitable values $\alp, \bet$ with
$0 \leq \alp < \bet \leq 2^{\lam}$.
In this case,  we  substitute in (\ref{otto}) the expression $[\mu,\nu]$ by
$C_{\del, \lam-1}[\alp, \bet]$. Analogously, we deal with $[\mu',\nu']$.

For example, after these substitutions a rule in (\ref{otto}) might have the following form
$ 
C_{\gam,0} \to a\; C_{\gam,\lam-1}\; C_{\eta, \lam-1}[\alp', \bet'].
$ 

Finally, we observe that each variable $X$ which occurs in $L=R$ is some $X_i$.
Without restriction we have $X\in \OO_+$.
For the maximal value of $\lam$ we introduce an additional chain rule
\begin{equation}\label{otto}
X \to C_{\leftp(i),\lam}[0,\abs X].
\end{equation}
  After transforming all rules in Chomsky normal form we obtain an \ig  of size $\Oh(d \cdot \log\wt N)$. The final step is the transformation of the \ig into an SLP
  using \prref{thm:ig2slp}. This establishes the bound $\Oh(d^2 \cdot \log^2\wt{N})$.
\end{proof}

According to \prref{thm:compall} we can compress the generic solution by some SLP and then we can obtain a solution in $\GG^*$ by substituting maximal free intervals. 
Say, we have a promise that a  solution exists such that $\abs{\sig(X)}$ has at most exponential length for each variable. Then we can guess in non-deterministic polynomial time an SLP for the generic solution $\wt{\sig}$. But this does not mean that we can efficiently check that $\wt{\sig}$ corresponds to an actual solution because one still has to check that there exists a substitution respecting the constraints. In order to explain the difficulty let us consider the special case of equations with rational constraints. The family of rational subsets is defined for every monoid $M$. 
 It consists of the smallest family containing the finite subsets of $M$ and which is closed under finite union, product and ``generated submonoid''. 
It has been shown in \cite{dgh05IC} that the existential theory of equations with rational constraints over free groups is PSPACE complete. The PSPACE hardness follow from the classical fact that the intersection problem for regular languages in free monoids is PSPACE complete, \cite{koz77}.
The input to that problem is simply a collection of $n$ finite (deterministic) automata
$A_1 \lds A_n$ and the question is whether  $L(A_1) \cap \cdots \cap L(A_n) \neq \es$, where $L(A_i)$ denotes the accepted language. 
(It is easy to encode this problem by a system of equations with rational constraints.) 
Now, if $L(A_1) \cap \cdots \cap L(A_n) \neq \es$ then  a shortest word in the intersection has at most exponential length. However, in general we cannot expect that there is any SLP of polynomial size representing this shortest word. If it were then we could guess the corresponding SLP in non-deterministic polynomial time and then 
check in deterministic polynomial time that the SLP generates a word in the intersection. 
As a consequence we could deduce NP=PSPACE, which is widely assumed to be false.

\section{Free products of abelian groups}
In the following $G_\alp$ denote abelian groups.
   We assume that 
each $G_\alp$ is generated by a subset $\GG_\alp\sse G_\alp\sm \os 1$ which is closed under involution, i.e., $g \in \GG_\alp$ implies  $g^{-1} \in \GG_\alp$. We let $P$ be a finite index set and 
$F= \star_{\alp \in P}G_\alp$ be the free product. Thus, $F$ is a finitely generated 
free product of abelian groups.  The direct product 
$\Fab = \prod_{\alp \in P} G_\alp$ is  the abelian 
quotient of $F$. 

We let  $\GG= \bigcup_{\alp \in P}\GG_\alp$ be the disjoint union. 
Then $\GG$ is an alphabet with involution. 
We obtain a morphism $\psi : \GG^* \to F$ and elements of $F$ can be  represented as 
words over $\GG$. 
Words in $\GG^*$ are split into factors according to $\alp$. To make this formal
we let $\DD_\alp = G_\alp \sm \os 1$ and $\DD= \bigcup_{\alp \in P}\DD_\alp$ be the disjoint union. Then $\DD$ is also an alphabet with involution, but typically infinite. The inclusions $\GG_\alp \sse \DD_\alp \sse G_\alp$ induce canonical 
\morph{}s $$\GG^* \sse \DD^* \overset{\psi}\to F \to  \Fab.$$
We also have a \morph $\psi_\alp: \DD^* \to G_\alp$
wich is induced by $\psi_\alp(g ) = g$ for $g \in G_\alp$ and 
$\psi_\alp(g ) = 1$ otherwise. 

A word $a_1 \cdots a_n$ with $a_i\in \DD$ is called \emph{reduced}, if
$a_{i} \in \DD_\alp$ implies $a_{i+1} \notin \DD_\alp$ for all $\alp \in P$ and 
$1 \leq i < n$. Every element in $F$ has a unique normal form $f_\DD$ as a reduced word 
over $\DD$. We identify the set $F$ with its set of normal forms 
$\wh F= \set{f_\DD}{f \in F} \sse \DD^*$. 
For $f \in F$ we let $\abs{f}_\DD = \abs{f_\DD}$ be the length a reduced word 
in $\DD^*$ representing $f$, whereas $\abs{f}_\GG$ denotes the length of a shortest word over $\GG^*$ representing $f$. Note that $\abs{f}_\DD \leq \abs{f}_\GG$. 
A word $w\in \GG^*$ of length $\abs{f}_\GG$ representing $f$ is called a 
\emph{geodesic} word for $f$. In contrast to $f_\DD$ geodesics are not unique, in general.

\subsection{Extended Parikh-constraints}\label{sec:epc}
For a word $w \in \DD^*$ we let $\abs{w}_\alp$ the number of letters from $\DD_\alp$. 
The vector $(\abs{w}_\alp)_\alp \in \N^P$ is called the \emph{Parikh-image} of $w$. It counts how often a position $\alp \in P$ is used as a
non trivial factor in a word over $\DD$. 
We have $\abs w = \sum_\alp \abs{w}_\alp.$
We also let  $\pi_\alp(w) = (\abs{w}_\alp, \psi_\alp(w)) \in \N \times G_\alp$. 
This extends to a unique \morph
$$ \DD^* \to \prod_{\alp \in P} (\N \times G_\alp) = \N^P \times \Fab. $$ 
However, later in the applications we need also to control the first and last positions from 
$P$, because we need that if we replace a factor by some other factor in a reduced word, the new word must be still reduced. 
Therefore we use two more mappings. 
We define $\text{first}(w)\in P \cup \os 1$ to be $1$ is empty and 
to be $\alp \in P$, if the reduced form of $w$ starts with a
non empty factor in $P$. Symmetrically, we let  $\text{last}(w)$ to be the last position. Thus, $\text{last}(w) = \text{first}(\ov w)$.

This yields 
an 
``extended Parikh-mapping''
\begin{align*}
\pi: \DD^* \to &\quad \N^P \times \prod_{\alp \in P} G_\alp \times (P \cup \os 1) \times (P \cup \os 1)\\
\pi(w) = &\; ((\abs{w}_\alp)_{\alp \in P}, \; \phi(w),\; \text{first}(w),\;  \text{last}(w)).
\end{align*}
Using $F = \wh F \sse \DD^*$ we can apply $\pi$ to elements in the group $F$. 
The idea is to change solutions in such a way that they become compressible by SLPs, but  the image under $\pi$ remains invariant.

For simplicity of notation we choose for every index $\alp \in P$ some fixed  letter, 
called $\alp \in \GG_\alp$ again. Thus, we view $P \sse \GG \sse \DD$ and we can speak about reduced words in $P^*$. Such a word is a sequence 
$\alp_1 \cdots \alp_n$ with $\alp_i \neq \alp_{i+1}$ for all $1 \leq i <n$. 
We have the following combinatorial lemma which is crucial for compression.

\begin{proposition}\label{prop:reorder}
Let $w= \alp_1 \cdots \alp_n \in P^*$ be a reduced sequence of length $n \geq 1$ 
with $a =  \alp_1$, $c = \alp_n$; and let $\abs {\Alp(w)} = \ell$. 

If $\ell \leq 2$, then 
$w$ has either the form $(ac)^{k}$ or $(ab)^{k}a$ for $k = \floor {\frac n2}$. 
If $\ell \geq 3$, then there exists a reduced word $w'\in P^*$ with $\pi(w)= \pi(w')$
and $w' \in aP^*c$ such that one of the following assertions hold. 
\begin{enumerate}
\item \label{reorderi}
It is  $\abs{w}_a = \ceil{\frac n2}$ and for $k = \ell -1$ and some $d \in\os { 1, a}$ we have
$$w' =  (a\bet_1)^{n_1} \cdots (a\bet_k)^{n_k}
d.$$
\item \label{reorderii}
If $\abs{w}_a = \frac n2$ 
and for $k = \ell -1$ we have 
$$w' = (a\bet_1)^{n_1} (\bet_2 a )^{n_2} \cdots (\bet_ka)^{n_k}.$$
\item \label{reorderiii}
It is  $\abs{w}_a < \frac n2$ and  for some $d \in \os{1,c}$ and  $k \leq \binom \ell 2$ we have
$$w' =(a\gam_1)^{n_1}(\bet_2\gam_2)^{n_2} \cdots (\bet_k\gam_k)^{n_k}d.$$
\end{enumerate}
\end{proposition}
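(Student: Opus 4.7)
For $\ell \leq 2$, reducedness forces strict alternation of the two letters involved, and a case split on the parity of $n$ gives directly the forms $(ac)^k$ or $(ab)^k a$ with $k = \lfloor n/2 \rfloor$.

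For $\ell \geq 3$ I plan to recast the statement as an Eulerian path problem. Associate to $w$ the directed multigraph $H = H(w)$ on vertex set $\Alp(w)$ whose edges are the consecutive pairs $(\alpha_i, \alpha_{i+1})$ read off from $w$. Reducedness of $w$ corresponds to the absence of self-loops, and $w$ itself traces out an Eulerian path in $H$ from $a$ to $c$. A direct degree count shows that any other Eulerian path in $H$ from $a$ to $c$ gives a reduced word $w'$ that visits each vertex $\alpha$ exactly $|w|_\alpha$ times; since the Parikh vector determines the projection to $\Fab$, we have $\pi(w)=\pi(w')$. The problem therefore reduces to: exhibit an Eulerian path in $H$ of the prescribed combinatorial shape.

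I would treat the three subcases in turn. When $|w|_a = \lceil n/2 \rceil$, reducedness together with a pigeonhole argument forces $a$ to occupy every other position of $w$, so every edge of $H$ is incident to $a$; the graph is a bouquet of antiparallel pairs $a \leftrightarrow \beta$, with a one-edge asymmetry at $c$ when $c\neq a$. Grouping the out-edges from $a$ by their target $\beta_j$ and oscillating $n_j$ times between $a$ and $\beta_j$ before moving to the next neighbor yields~(\ref{reorderi}), with a trailing letter $d \in \{1,a\}$ determined by the parity of $n$. Case~(\ref{reorderii}) is a close variant: when $|w|_a = n/2$ and the Eulerian path must terminate at $a$, starting with a single cluster $(a\beta_1)^{n_1}$ that leaves $a$ with one outgoing edge to spare, and then oscillating as $(\beta_j a)^{n_j}$, produces a path ending at $a$.

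The main combinatorial difficulty lies in case~(\ref{reorderiii}), where $|w|_a < n/2$ forces $H$ to contain edges not incident to $a$. My plan is to decompose the Eulerian path into \emph{swings} $(\beta_j\gamma_j)^{n_j}$, each being a maximal back-and-forth segment between a fixed unordered pair $\{\beta_j,\gamma_j\}$ of distinct letters. Since there are only $\binom{\ell}{2}$ such pairs, I would arrange matters so that each pair contributes at most one swing, giving $k \leq \binom{\ell}{2}$. The nontrivial part is to order the swings so that the first starts at $a$, consecutive swings share a vertex at the junction (ensuring reducedness), and the last swing ends at $c$ up to a trailing letter $d \in \{1,c\}$ absorbing a parity correction. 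I expect this to proceed by induction: peel off one swing at a time from $H$, verify that the residual multigraph still admits an Eulerian path with the required endpoints (using connectivity of $H$ and the updated degree balance), and recurse. The delicate step is choosing which pair to peel next so as to preserve both connectivity and the bound on $k$; this is where the bulk of the argument will lie.
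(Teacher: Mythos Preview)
There is a genuine gap in your plan for case~(\ref{reorderiii}). The directed multigraph $H(w)$ records the \emph{consecutive} pairs of $w$ and therefore depends on $w$ itself, not only on its Parikh vector; restricting the search for $w'$ to Eulerian paths in $H(w)$ is too rigid. Take $\ell = 3$ and $w = (abc)^4$, so $n=12$ and $|w|_a = 4 < n/2$. The graph $H(w)$ has only the directed edges $a\to b$, $b\to c$, $c\to a$ (with multiplicities $4,4,3$), so the \emph{unique} Eulerian path from $a$ to $c$ in $H(w)$ is $w$ itself. But $(abc)^4$ admits no swing decomposition with $k\le\binom{3}{2}=3$: since $H(w)$ contains no $b\to a$ edge, any swing on the pair $\{a,b\}$ occurring as a factor is forced to be $(ab)^1$, and similarly for the other two pairs, so every swing decomposition of $(abc)^4$ uses six swings. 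The word the proposition actually promises, e.g.\ $w' = (ab)^2(ca)^2(bc)^2$, uses a transition $b\to a$ that is absent from $H(w)$ and is therefore not an Eulerian path there at all. Your peel-and-recurse scheme cannot manufacture such a $w'$.

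The paper's argument avoids this by never fixing $H(w)$; it works directly with the multiset of letters. For even $n$ it starts from an \emph{arbitrary} perfect matching of the positions $\{1,\dots,n\}$ into two-element blocks and repairs it by swaps so that each block $\{i,j\}$ has $\alpha_i\neq\alpha_j$; this uses only the bound $|w|_\alpha\le n/2$ coming from reducedness. Grouping the blocks by their unordered letter-pair gives at most $\binom{\ell}{2}$ groups, and these groups are then threaded into a reduced word by orienting each block (two choices) and ordering the groups, with a short case analysis to make the first block begin with $a$ and the last end with $c$. Odd $n$ is reduced to even $n$ by stripping the last letter. If you want to keep a graph picture, the right object is not the single graph $H(w)$ but the whole family of loop-free multigraphs on $\Alp(w)$ realising the given vertex-visit counts; your current plan fixes one member of this family where it should be free to choose another.
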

\begin{proof} The proof is obvious for $\ell \leq 2$. Hence let $\ell \geq 3$. 
Note that the image of $\psi (P^*)$  lies in the abelian group $\Fab$. 
Thus, we it is enough to show  that there exists a reduced sequence $w'\in aP^*c$ with $\abs{w}_\alp = \abs{w'}_\alp$ for all $\alp \in \Alp(w)$. 
Since $w$ is reduced we have $\abs{w}_a \leq  \ceil{\frac n2}$. For
 $\abs{w}_a \geq  \frac n2$ we are in situation \ref{reorderi} or \ref{reorderii}.
 If $n$ is odd we are in situation \ref{reorderi} with $a = c = d$. 
 If $n$ is even and $a\neq c$ we are in situation \ref{reorderi} with $\bet_k = c$ and $d=1$. If $n$ is even and $a = c$ we are in situation \ref{reorderii}.

For the rest of the proof we my therefore assume $\abs{w}_a < \frac n2$. 
If $n$ is odd, then the assertion holds for the 
word $\wt w= \alp_1 \cdots \alp_{n-1} \in P^*$ with first letter 
$a$ and last letter $\alp_{n-1}$ by induction. We are done in this case with $d=c$ since $\alp_{n-1}$ exists and $\gam_k= \alp_{n-1}\neq \alp_n =c$.

It remains to show
\ref{reorderiii} under the assumption that $\abs w$ is even and $\abs{w}_a < \frac n2$.
Note that this implies $n \geq 4$. 
We match indices $1 \lds n$ by defining sets 
$V_{ij}= \os {i,j}$ such that $\alp_{i}\neq \alp_j$ and in such a way that the collection of the sets 
$V_{ij}$ yields a partition of $V= \os{1 \lds n}$. To see that this is possible start with any partition of $V$ into two-element subsets $V_{ij}$. Assume there is 
some $V_{ij}$ with $\alp_{i}=\alp_j$. As $w$ is reduced, we have $\abs{w}_{\alp_{i}}
\leq n/2.$ Hence there must be some $V_{p q }$ with $\alp_p \neq \alp_{i}\neq \alp_q$. We replace $V_{ij}$, $V_{p q}$ by $V_{ip}$, $V_{jq}$. Continuing this way we achieve a partition as desired. 

Consider the set $V_{i_nn}$ and let  $\alp_{i_n}= b$.
Then we have $b \neq c$.  We are going to construct 
a reduced word  of the form
$$w' =(a\gam_1)^{n_1} (\bet_2\gam_2)^{n_2}\cdots (\bet_{k-1}\gam_{k-1})^{n_{k-1}}(bc)^{n_{k}}.$$
 We construct $w'$ under the restriction that 
$w'$ is reduced, it begins with $a$ and it ends in the factor $bc$. 
The idea is to write 
the sets $V_{ij}$ in a list starting with some $V_{1j_{1}}$ and ending in $V_{i_nn}$; and then to replace $V_{ij}$ by $\alp_{i}\alp_j$.
We must show that the resulting word $w'$ is reduced. 
We know that $V_{ij} \neq V_{p q} $
implies $\os{\alp_{i},\alp_j} \neq \os{\alp_{p},\alp_q}$. This shows 
$k\leq \binom \ell 2$.
As $\alp_{i}\neq \alp_j$ for all $V_{ij}$ we have always two options how to continue until the last $V_{i_nn}$.
For $a=b$ we can avoid $\gam_{k-1} = b$ and therefore the construction 
of $w'$ is straightforward. Now for $a \neq b$ we have 
$a \neq b \neq c$. Hence $\abs{w}_b = n/2$ cannot happen, because $w$ is reduced of even length. 
This means there is at least one set $V_{ij}$ with $\alp_{i}\neq b \neq \alp_j$.
We may assume that the replacement of $V_{ij}$ by $\alp_{i}\alp_j$
results in a factor $\bet_m\gam_m$ for some $m \leq k-1$ such that 
we can avoid $\gam_{q} = b$ for all $m <q \leq k-1$. \end{proof}

\subsection{Equations over free products of abelian groups}\label{sec:eqfpag}
As above we continue with a free product of abelian groups $F$.
An \emph{equation} over $F$ is written as $L=R$ where $L,R \in \OO^*$. 
We do not need constants, because we allow extended Parikh-constraints.

 We use the following well-known fact. It shows that solvability of an equation over $F$  split into two parts. A global word equation over $\DD$ and local equations over the $G_\alp$. 
 Its proof is straightforward and omitted.
\begin{lemma} \label{lem:lem1}
  Let $u,v,w \in \DD^*$ be  reduced words.  Then we have
  $uv=w$ in the free product $F$
  if and only if there are $\alp \in P$, $a,b,c \in G_\alp$, $p, q, r \in \DD^*$
  such that  
  \begin{enumerate}
\item $u=pa\overline{q}$,
  $v=qbr$, and $w=p c r$ in $\DD^*$,
  \item
$ab = c$ in the abelian group $G_\alp$.
\end{enumerate}
\end{lemma}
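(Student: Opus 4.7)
The plan is to handle each direction separately, with the ``if'' direction reducing to a one-line calculation in $F$ and the ``only if'' direction reducing to an analysis of how cancellation occurs when multiplying two reduced words.

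For the ``if'' direction, I would simply compute in $F$: given the word factorizations $u=pa\ov q$, $v=qbr$, $w=pcr$ and the relation $ab=c$ in $G_\alp$, we have
\[
uv \;=\; pa\ov q\cdot qbr \;=\; p\,(a\ov q q b)\,r \;=\; p\,(ab)\,r \;=\; pcr \;=\; w
\]
in $F$, where we used $\ov q q = 1$ in $F$ and $ab=c$ in $G_\alp\sse F$. Note the resulting word $pcr$ is automatically a reduced element of $\DD^*$ because $w$ was assumed reduced.

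For the ``only if'' direction, I would make the cancellation structure explicit. First choose $q\in\DD^*$ to be the longest suffix of $u$ such that $\ov q$ is a prefix of $v$; write $u=u'\ov q$ and $v=qv'$ as words in $\DD^*$. Then in $F$ we have $uv = u'v'$ after cancelling the factor $\ov q q$. Now inspect the last letter of $u'$ and the first letter of $v'$ (if they exist). There are two cases. If both letters exist and lie in the same $G_\alp$, write $u'=pa$ and $v'=br$ in $\DD^*$ with $a,b\in G_\alp$; the maximality of $q$ forces $ab\neq 1$ in $G_\alp$ (otherwise $q$ could be extended by one letter), so setting $c:=ab$ we get $uv=pcr$ in $F$, and this word is reduced since the letter preceding $a$ in $u$ (and hence in $p$) is not in $G_\alp$ (because $u$ is reduced), and similarly for $r$. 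Otherwise the two boundary letters belong to different $G_\alp$'s or at least one is missing; in that case take $p:=u'$, $r:=v'$, pick any $\alp\in P$, and set $a=b=c=1\in G_\alp$, which satisfies the requirements trivially.

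In either case we obtain the claimed decomposition $u=pa\ov q$, $v=qbr$ as words in $\DD^*$, together with $ab=c$ in $G_\alp$, and $pcr$ represents the element $uv\in F$ by a reduced word of $\DD^*$. Since the reduced normal form is unique, $w=pcr$ in $\DD^*$. The only minor subtlety is the maximality argument that rules out $ab=1$ in the same-group case; everything else is a direct manipulation of words, which is why the authors call the proof straightforward and omit it.
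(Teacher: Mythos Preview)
Your argument is correct and is precisely the kind of direct verification the authors have in mind when they call the proof ``straightforward'' and omit it; there is nothing in the paper to compare against. One minor notational slip: you describe $q$ as the longest suffix of $u$ with $\ov q$ a prefix of $v$, but then factor $u=u'\ov q$ and $v=qv'$, which is the opposite convention---the rest of the argument uses the latter (correct) factorization, so this is harmless.
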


 We construct a new system of equations $\cS'$ such that
 $\sig$ solves $\cS'$ and such that all solutions of  $\cS'$ are also solutions of 
 $\cS$. The construction is  as follows. 
  First, we transform all equations into triangular form, i.e., they look 
 like $XY=Z$ where $X,Y,Z \in \OO$. 
 
 Next, we split the triangular system of equations into two parts, a global part of word equations with solutions in $\DD^*$ and a  local part of equations of type
$AB=C$ with solutions in $G_\alp$.
Now the trick is to put $ab = c$ into constraints. More concretely consider 
an equation $XY=Z$ of our system. Let $u = \sig(X), v= \sig(Y),w = \sig(Z)\in \DD^*$ be the  reduced words given by $\sig$. 
We choose $\alp \in P$, $a,b,c \in G_\alp$, $p, q, r \in \DD^*$ according to 
\prref{lem:lem1}. We introduce fresh symbols $A,B,C,P, Q, R$  and we add them to $\Omega_+$.

In the next step we replace the equation 
$XY= Z $ in $\cS$ by three equations: 
 $$U=PA\overline{Q}, \quad 
  V= QBR, \quad W=PCR.$$  
We simulate the equation $AB=C$ by constraints. 
To do so, we introduce three additional extended Parikh-constraints: 
$$A = \os {a} , \quad B = \os {b} , \quad C = \os {c}.$$
We also extend the solution by defining $\sig(A) = a$,  $\sig(B) = b$, \ldots , $\sig(R) = r$. Moreover, we add the constraint $X\in \wh F$ for all $X\in \Omega_+$
where  $\wh F$ is the set of reduced words in $\DD^*$. 

This step finishes the transformation and defines a system $\cS'$ with constraints. 
All equations are triangular and the constraints are conjunctions of extended Parikh-constraints and constraints of the form $X\in \wh F$. 
This is not an extended Parikh-constraint!

Note that $\sig$ still solves the new system with constraints and if $\sig'$
is any other solution of the new system, then $\sig'$ solves the original system $\cS$ as well because 
$ab = c$ in the abelian group $G_\alp$. 

Finally, for a solution $\sig: \OO_+\to \DD^*$ and $X \in \Omega$ we let 
$\sig_\alp(X) \in G_\alp$ be the image of $\sig(X)$ under the natural projection 
of $\DD^*$ onto $G_\alp$. Each $\sig_\alp(X)$ can be written as a word over $\GG_\alp$, and a word  $\sig(X)$ can be written as a word over $\GG$ of 
length $\abs{\sig(X)}_\GG$.

If $\Phi$ is  a Boolean formula of equations  over $F$ with constraints and $\sig : \OO_+\to \DD^*$ is a solution then 
there we can extract a system of equations $\cS$ and subset of 
constraints (and  their negations) and additional constraints of type $X \neq 1$ 
such that $\sig$ is a solution of $\cS$ and moreover, every 
solution of $\cS$ solves $\phi$, too. 
We define the size of the formula $\Phi$ by 
 $\Abs \Phi=  \abs \GG + \sum_{i=1}^k \abs{L_iR_i}$, where 
$L_i=R_i$ are the equations used in the formula with  $L_i$, $R_i \in \OO^*$. Note that the size $\Abs \Phi$ does not take the constraints into account. For the length of a solution we have to write 
words over $DD$ as words over $\GG$. Therefore we define the 
length of $\sig$ by the number $N = \abs \OO +\sum_{X \in \OO}\abs{\sig(X)}_\GG$. The term $\abs \OO$ takes care to write down $\sig(X) =1$. 

\begin{theorem}\label{thm:gencomp}
There exists a polynomial $p(n)$ such that the following holds: 
Let $F$ be a free product of abelian groups and $\GG$ be a set of generators of $F$. 
Let $\Phi$ be a Boolean formula of equations  over $F$ with extended Parikh-constraints and
  and let $\sig : \OO_+\to \DD^*$ be a solution in reduced words
of length $N$. 

Then there is also a solution $\sig': \OO_+\to \DD^*$ in reduced words such that 
the following conditions hold.
\begin{enumerate}
\item We have $\pi(\sig(X)) = \pi(\sig'(X))$ for all $X\in \OO$.
\item There is an SLP  $S$ with constants in $\GG$ of size at most $p(\Abs\Phi + {\log N})$  such that
each $X\in \OO$ appears also as variable in $S$ and satisfies 
$\eval(X) = \sig'(X)$ in the group $F$.
 \end{enumerate}
 \end{theorem}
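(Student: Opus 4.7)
The plan is to reduce the Boolean formula $\Phi$ to a triangulated system of equations with extended Parikh-constraints, compress the generic solution of $\sig$ using \prref{thm:compall}, and then replace each long maximal free interval by a carefully chosen reduced word of the same extended Parikh-image that admits a short SLP.

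First, using the construction described in \prref{sec:eqfpag}, I extract from $\Phi$ a system $\cS$ of triangular equations together with extended Parikh-constraints and the global constraints $X\in\wh F$ in such a way that $\sig$ solves $\cS$ and every solution of $\cS$ solves $\Phi$. Viewing $\sig$ as a \morph into $\DD^*$ I introduce the cuts defined by $\sig$, the equivalence classes of maximal free intervals, and the generic solution $\wt\sig:\OO_+\to\wt\GG^*$, where $\wt\GG$ denotes the involution-closed set of these classes. By \prref{cor:freeints} we have $\abs{\wt\GG}\in\Oh(d)$ with $d$ polynomial in $\Abs{\Phi}$, and \prref{thm:compall} applied to $\cS$ produces an SLP $S_0$ over the alphabet $\wt\GG$, of size polynomial in $\Abs{\Phi}+\log N$, such that every variable $X\in\OO_+$ appears in $S_0$ and evaluates to $\wt\sig(X)$.

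The heart of the proof is to realize each equivalence class of maximal free intervals of length $n\geq 2$ by a word in $\DD^*$ that leaves $\pi$ invariant and admits a small SLP with constants in $\GG$. Let $\gam_1\cdots\gam_n\in P^*$ be the $P$-projection of such a class; it is a reduced word in $P^*$. Feeding it into \prref{prop:reorder} yields a reduced $P$-word $w'=\prod_{i=1}^k w_i^{n_i}$ with $k\leq\binom{\abs\GG}{2}$, each $\abs{w_i}\leq 2$, and with the same Parikh-image over $P$ as well as the same first and last positions as the original. To lift $w'$ to a word in $\DD^*$, I fix for each $\alp\in P$ occurring in $w'$ a non-trivial generator $g_\alp\in\GG_\alp$, assign $g_\alp$ to all but one of the $n_\alp$ symbol-$\alp$ positions of $w'$, and place the correction element $c_\alp=\psi_\alp(w)\cdot g_\alp^{-(n_\alp-1)}\in G_\alp$ on the remaining one. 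In the small number of corner cases where this choice would force $c_\alp=1$ (and hence violate reducedness), I split the correction over two positions using two suitably chosen non-trivial elements; since each $G_\alp$ is abelian this is always possible whenever $\abs{G_\alp}\geq 2$ and $n_\alp\geq 1$. A further application of \prref{ex:slpforab} writes each $c_\alp$ as a $\GG_\alp^*$-word by an SLP of size $\Oh(\abs\GG\log N)$, and binary powering encodes each $w_i^{n_i}$-block by an SLP of size $\Oh(\log n_i)$, so the realization SLP for one class has size $\Oh(\abs\GG^2\log N)$.

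Finally I splice these realization SLPs into $S_0$ by substituting each letter of $\wt\GG$ by the axiom of its SLP, and apply \prref{cor:freered} to obtain the announced SLP $S$ with constants in $\GG$, satisfying $\eval(X)=\sig'(X)$ in $F$; the total size is polynomial in $\Abs{\Phi}+\log N$. Since $\pi$ is preserved on every maximal free interval, $\pi(\sig(X))=\pi(\sig'(X))$ for all $X$, so $\sig'$ meets the same extended Parikh-constraints as $\sig$, and the preservation of first and last $P$-positions in every block guarantees that $\sig'(X)$ is reduced in $\DD^*$, i.e.\ the constraints $X\in\wh F$ hold. The main technical obstacle is \prref{prop:reorder} combined with the distribution of group elements inside each reordered block: simultaneously shrinking the number of distinct blocks to polynomially many, keeping every inserted letter non-trivial in its $G_\alp$, and preserving the first and last positions in $P$ is exactly the role played by this proposition and the two-position correction trick.
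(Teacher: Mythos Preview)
Your proposal is correct and follows essentially the same route as the paper: reduce to a triangulated word-equation system via \prref{lem:lem1}, compress the generic solution with \prref{thm:compall}, and then use \prref{prop:reorder} to replace each long maximal free interval by a reduced $\DD^*$-word of the same extended Parikh-image built from few distinct $\DD$-letters, each of which has a short $\GG$-SLP. You have in fact filled in considerably more detail than the paper's proof (the distribution of the ``correction'' elements $c_\alp$ and the two-position splitting when $c_\alp=1$), and this elaboration is sound. One small point: the appeal to \prref{cor:freered} at the end is unnecessary, since the theorem only asks for $\eval(X)=\sig'(X)$ in the group $F$ and your spliced SLP already satisfies this; free-group reduction is harmless here (it does not change the image in $F$) but it is not part of the argument.
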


\begin{proof}
 With the help of Parikh constraints we may assume that the input $\Phi$ is given by some system $\cS$ of equations with extended Parikh-constraints. Next we may assume that all equations are in triangular form. According to \prref{lem:lem1} we transform the input into a system of word equations with  extended Parikh-constraints. 
 We do not need the constraints $X \in \wh F$ because $\sig$ is a solution in reduced words. 
Hence,  $\sig$ can be extended to a solution in reduced words of the new system and 
 the total length $N$ is still polynomial in its original value. 
 
 In the next step, we produce the generic solution $\wt \sig$ (which belongs to 
 $\sig$) 
 according to \prref{thm:compall}. This solution has an SLP of size which is polynomial in 
 $\abs{\Phi}+ {\log N}$. In the generic solution we must substitute  maximal free intervals by compressible words which respects the extended Parikh-constraints. Note that it is here that we need the control on first and last letters, because after substitution the words have to remain reduced. 
 In order to produce short SLPs for the substitution we use \prref{prop:reorder}.
\end{proof}

Note that \prref{thm:gencomp} does not say that every solution can be compressed using an SLP. Even if $N$ is minimal we only state that there is another solution with good compression. 
However if there are no constraints at all or, more general, if we content ourselves to ``alphabetic'' constraints, then we can state a stronger statement. 

Let $F= \prod_{\alp \in P} G_\alp$ be a free product as above. 
For an element $w \in F$ we let $\Alp(w) = \set{\alp \in P}{\abs{w}_\alp \geq 1}$ be the 
\emph{alphabet} of $w$. The alphabet specifies which factors in the free product 
are used in a reduced representation of $w$. This allows to define an 
\emph{alphabetic constraint} by 
$$\set{w \in F}{ \Alp(w)=A, \;\text{first}(w) = \bet,\;  \text{last}(w)=\gam
},$$ where 
$\alp\sse P$, $\bet, \gam \in P$. Clearly, an alphabetic constraint is just a special case of an extended Parikh-constraint. There are 
$\abs{P}^2 2^{\abs P}$ alphabetic constraints, but in formulae it is enough to have 
atomic constraints of the form $\set{w \in F}{\alp \in \Alp(w), \text{first}(w) = \bet, \text{last}(w)=\gam },$ where $\alp, \bet, \gam \in P$. 

\begin{theorem}\label{thm:genalp}
There exists a polynomial $p(n)$ such that the following holds: 
Let $F$ be a free product of abelian groups and $\GG$ be a set of generators of $F$. 
Let $\Phi$ be a Boolean formula of equations  over $F$ with  alphabetic constraints and let $\sig : \OO_+\to \DD^*$ be a solution
such that its length $N$ is minimal among all solutions. 
Then there is an SLP  $S$ of size $p(\Abs{\Phi}  + {\log N})$  such that
each $X\in \OO$ appears also as a variable in $S$ and satisfies $\eval(X) = \sig(X)$ in the group $F$.
  \end{theorem}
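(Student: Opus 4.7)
The plan is to follow the scheme of \prref{thm:gencomp}: compress the generic solution via \prref{thm:compall}, then substitute the content of each maximal free interval back into the SLP. The decisive new ingredient compared with the general extended-Parikh case is that alphabetic constraints together with minimality of $\sig$ allow us to avoid any rearrangement based on \prref{prop:reorder} and to retain the actual solution $\sig$ itself (up to $F$-equality). Concretely, I would first reduce $\Phi$ to a triangular system $\cS$ as in \prref{sec:eqfpag}. The auxiliary constraints $A\in\os a$ created by that reduction live only at single cut positions and therefore never constrain the interior of a maximal free interval. Applying \prref{thm:compall} to the generic solution $\wt\sig$ specified by $\sig$ yields an SLP $S_0$ of size polynomial in $\Abs\Phi + \log\wt N \leq \Abs\Phi + \log N$ whose variables evaluate to $\wt\sig(X)\in\wt\GG^*$.

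The heart of the proof is the following key lemma: if $\sig$ is minimal under alphabetic constraints, then every maximal free interval $[\alp,\bet]$ of length at least $2$ has $\GG$-length $\Oh(\Abs\Phi)$, and each of its positions carries a generator from $\GG$. To see this, let $u = w[\alp,\bet]$ and suppose $u$ is long. Since $[\alp,\bet]$ is free, no cut crosses any $\approx$-equivalent position, so one may simultaneously substitute $u$ in every such equivalent position by any reduced word $u'\in\DD^*$ whose first and last letters remain compatible with the unchanged surrounding context; this produces a new mapping $\sig'$ that still satisfies $\sig'(L)=\sig'(R)$ in $\DD^*$, hence in $F$. Alphabetic constraints are preserved provided that every $P$-index that was witnessed in some $\sig(Y)$ only inside a $u$-copy still appears in $u'$. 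The number of such indices is at most $\abs P\leq \Abs\Phi$, so one can take $u'$ to be a reduced word of $\DD$-length at most $\abs P+2$ whose every position is a generator in $\GG$. Minimality of $\sig$ then forces $\abs u_\GG \leq \abs{u'}_\GG = \Oh(\Abs\Phi)$, and the same argument applied position by position shows that every letter of $u$ itself is already a generator.

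Given the key lemma, the substitution step is immediate. In $S_0$ every letter of $\wt\GG$ represents either a length-$1$ maximal free interval, realized by a single $G_\alp$-element $g$ admitting an SLP of size $\Oh(\abs\GG\cdot\log N)$ by \prref{ex:slpforab}, or a length-$\geq 2$ maximal free interval whose content is, by the key lemma, a word in $\GG^*$ of length $\Oh(\abs P)$ and is therefore trivially encoded by an SLP of the same size. Since $\wt\GG$ contains at most $2d-2\leq 2\Abs\Phi$ letters by \prref{cor:freeints}, attaching these sub-SLPs to $S_0$ gives an SLP $S$ of size $p(\Abs\Phi+\log N)$ with $\eval(X)=\sig(X)$ in $F$. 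The main obstacle is the key lemma: careful bookkeeping is needed to ensure that the local replacement $u\mapsto u'$ neither destroys reducedness at the interval endpoints nor violates any of the finitely many alphabet or first/last-letter constraints, and obtaining a bound in $\Abs\Phi$ rather than in $N$ relies essentially on the fact that alphabetic constraints are insensitive to multiplicities.
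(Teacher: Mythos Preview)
Your proposal is correct and follows essentially the same route as the paper's proof: compress the generic solution via \prref{thm:compall}, then substitute back the \emph{actual} contents $w[\alpha,\beta]$ coming from $\sigma$ for each maximal free interval, using minimality together with the insensitivity of alphabetic constraints to multiplicities to bound those contents. The paper states this in three lines (``these words are necessarily the shortest ones which respect the alphabetic constraints, thus they visit at most $|P|$ positions, thus each has an SLP of size $O(|P|\cdot\log N)$''); your key lemma makes the same minimality-plus-replacement argument explicit, and your separate treatment of length-$1$ versus length-$\geq 2$ intervals even yields a slightly sharper bound in the second case, but the underlying idea is identical.
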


\begin{proof}
 The proof is almost identical to the proof of \prref{thm:gencomp}. The difference is that in order to substitute maximal free intervals of the generic solution by words we can use the words from the original solution given by $\sig$. 
 These words are necessarily the shortest ones which respect the alphabetic constraints. 
 Thus they visit at most $\abs P$ positions. Thus, each of them has an SLP representation 
 of size $\Oh(\abs P \cdot \log N)$. 
  \end{proof}

\begin{corollary}\label{cor:genalp}
Let $F$ be a free product of abelian groups and $\GG$ be a set of generators of $F$. 
Assume that the length of minimal solutions of equations over $F$ with alphabetic constraints can be bounded by some exponential function in $2^{n^{\Oh(1)}}$.
Then the question whether a given 
 Boolean formula of equations  over $F$ with  alphabetic constraints has a solution 
in $F$ can be decided in NP, i.e., in non-deterministic polynomial time. 
  \end{corollary}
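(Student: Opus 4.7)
The plan is a standard guess-and-verify \textbf{NP} procedure whose compression step is provided by \prref{thm:genalp}. Write $n = \Abs\Phi$ for the size of the input Boolean formula.

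First I would reduce satisfiability of $\Phi$ to satisfiability of a conjunctive system $\cS$ of equations together with alphabetic constraints, of size polynomial in $n$. The algorithm guesses a truth value for each atomic sub-formula of $\Phi$ and retains only assignments under which $\Phi$ evaluates to true. A negated equation $L\neq R$ is replaced by $L = RZ$ together with the requirement $Z\neq 1$, where $Z$ is a fresh variable; and $Z\neq 1$ is expressible as the disjunction $\bigvee_{\alp\in P}(\alp\in\Alp(Z))$, which the outer guess has already resolved into a single alphabetic atom. Negated atomic alphabetic constraints are themselves disjunctions of alphabetic constraints and are handled in the same way. Hence $\Phi$ is satisfiable iff at least one of the resulting systems $\cS$ is satisfiable, and each such $\cS$ has size polynomial in $n$.

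Second I would use the hypothesis together with \prref{thm:genalp}. If some $\cS$ obtained above has a solution, it has one of length $N\leq 2^{n^{\Oh(1)}}$, so $\log N$ is polynomial in $n$. Applying \prref{thm:genalp} to a minimal solution $\sig$ of $\cS$ yields an SLP $S$ of size $p(\Abs\cS + \log N) \in n^{\Oh(1)}$ whose designated variables evaluate in $F$ to $\sig(X)$ for every $X\in\OO$. The non-deterministic algorithm then guesses such an $S$ directly: the number of variables and the bit-length of each interval index are both polynomial in $n$.

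Third I would verify deterministically in polynomial time that the guessed $S$ really is a solution. For each equation $L = R$ of $\cS$ one tests $\eval(L)=\eval(R)$ in $F$ via the compressed word problem over $F$. For each alphabetic atom on $X$ one reads off $\Alp(\eval(X))$, $\text{first}(\eval(X))$, and $\text{last}(\eval(X))$ from a reduced SLP for $\eval(X)$: the alphabet is the union of the $P$-indices appearing in the terminals of the reduced grammar, and the boundary indices are obtained by descending along leftmost, respectively rightmost, non-empty children, using \prref{thm:pland} to detect which children are non-empty.

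The main obstacle is to establish that the compressed word problem in the free product $F = \star_{\alp\in P} G_\alp$ is solvable in polynomial time, since \prref{prop:comploh} only covers the free-group case. The natural route is to extend \prref{cor:freered}: for a rule $X\to YZ$ with reduced SLPs $\wh Y, \wh Z$ already constructed, one uses binary search together with \prref{thm:pland} to locate the longest matching boundary block where a suffix of $\eval(\wh Y)$ and a prefix of $\eval(\wh Z)$ lie in factors of the same $G_\alp$; the resulting residual middle product lives in a single abelian group $G_\alp$ and can be evaluated in polynomial time there. Once this polynomial-time reduction procedure for $F$ is in place, equality of two SLPs over $F$ is checked by comparing normal forms via \prref{thm:pland}, and the rest of the verification is routine. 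Combining the polynomial-size SLP guess with this polynomial-time verifier places the decision problem in \textbf{NP}.
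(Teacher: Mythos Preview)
Your proposal is correct and follows essentially the same guess-and-verify scheme as the paper: invoke \prref{thm:genalp} on a minimal solution to get a polynomial-size SLP, guess it, and check it in deterministic polynomial time. The paper's own proof is two sentences and simply asserts that verification is polynomial; your expansion---reducing the Boolean formula to a conjunctive system, and in particular flagging that verification needs the compressed word problem for the free product $F=\star_\alpha G_\alpha$ together with a sketch of how to extend \prref{cor:freered} to that setting---makes explicit what the paper leaves implicit.
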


\begin{proof}
 The NP-algorithm guesses an SLP for some solution of minimal length. 
 The size of the SLP has polynomial size. After that a  deterministic polynomial-time algorithm checks that the SLP is indeed a solution in reduced words
 verifying the alphabetic constraints. 
  \end{proof}

\section{Hyperbolic groups}\label{sec:hyp}
In this section $G$ denotes a torsion-free non-elementary $\delta$-hyperbolic group which is generated by some finite subset $\Sig \sse G \sm \os 1$. 
As usual, we let $\GG = \Sig \cup \ov \Sig$ where 
$\ov \Sig = \Sig^{-1}$. We view $\GG$ as a finite alphabet with involution and we denote by $\pi: \GG^* \to G$ the canonical morphism onto $G$. 
For a word $w \in \GG^*$ we denote by $\abs w$ its length and 
by ${\abs w}_G$ its \emph{geodesic length}. It is the length of a shortest word 
$u$ such that $\pi(w) = \pi(u)$. Phrased differently,
${\abs w}_G$ is the length of a shortest path from $1$ to $\pi(w)$ in the Cayley graph  $\Cay(G,\Sig)$ of $G$ with respect to the generating set $\Sig$. As usual, a word $w \in \GG^*$ is called  \emph{geodesic},
if ${\abs w} = {\abs w}_G$. 
We say that a word $w \in \GG^*$ is \emph{$(\lam,d)$-quasi-geodesic},
if every factor $u$ of $w$ satisfies 
$$\abs u \leq \lam {\abs u}_G +d.$$ 
Note that a $(\lam,d)$-quasi-geodesic of length greater than $d$ can never represent the identity in $G$. 
A word $w \in \GG^*$ is called \emph{$\mu$-locally $(\lam,d)$-quasi-geodesic},
if every factor $u$ of $w$ which has length at most $\mu$ is $(\lam,d)$-quasi-geodesic. 
A fundamental property of a hyperbolic group is that local quasi-geodesics
are global quasi-geodesics for the appropriate choice of parameters. 
More precisely, \cite[Thm.~1.4]{CDP91} and \cite[Rem.~7.2.B]{gro87} provide for all $\lam, d$ 
an effective  bound 
for $\mu$  which is  polynomial in  $\lam +\del$
such that every 
$\mu$-local $(\lam,d)$-quasi-geodesic word is $(\lam',d')$-quasi-geodesic
with $\mu > d'$. Now, being $\mu$-locally $(\lam,d)$-quasi-geodesic is a local property which is therefore a ``rational constraint''. 
This fact has also been used in \cite{Dah} in order to show that the existential theory of a hyperbolic group is decidable. However, we need a more precise statement than being a  rational constraint. 
Let us have a closer look. 
\begin{lemma}\label{lem:syntac}
Let $u$ be a $\mu$-local $(\lam,d)$-quasi-geodesic word in $G$. Then there
is a word $v$ of length less than 
${\abs \GG}^{\mu}$ such that for all $x,y\in \GG^*$ the word 
$z = xuy$ is $\mu$-locally $(\lam,d)$-quasi-geodesic \IFF 
$z' = xvy$ is $\mu$-locally $(\lam,d)$-quasi-geodesic.
Moreover, if  $u \neq v$ then  
 $\abs v \geq  \mu-1$. 
\end{lemma}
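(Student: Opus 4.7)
The plan is to recognise being $\mu$-locally $(\lam,d)$-quasi-geodesic as a regular-language property and replace $u$ by a shorter representative of the same syntactic class. Let $L = \set{w \in \GG^*}{w \text{ is $\mu$-locally $(\lam,d)$-q.g.}}$. This language is accepted by a deterministic automaton $\cA$ whose state after reading a prefix of length $k$ is the suffix of length $\min\os{k,\mu-1}$, together with an absorbing fail state entered as soon as some factor of length at most $\mu$ ending at the current position violates the $(\lam,d)$-q.g.\ inequality. The non-fail state set $Q$ thus has $\sum_{i=0}^{\mu-1}\abs{\GG}^i < \abs{\GG}^\mu$ elements. Define $u \sim_L u'$ by $xuy \in L \iff xu'y \in L$ for all $x,y \in \GG^*$. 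Proving the lemma reduces to producing $v \in [u]_{\sim_L}$ with $\abs v < \abs{\GG}^\mu$, such that either $v=u$ or $\abs v \ge \mu-1$.

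If $\abs u < \mu-1$, take $v=u$; the ``$u \ne v$'' clause is then vacuous. Otherwise, write $q_k = u[k-\mu+1,k]$ for $k \in \os{\mu-1,\ldots,\abs u}$. Whenever $\abs u > \abs{\GG}^{\mu-1} + \mu - 2$, the $\abs u - \mu + 2$ states $q_{\mu-1},\ldots,q_{\abs u}$ cannot all be distinct, so by pigeonhole there are indices $\mu-1 \le i < j \le \abs u$ with $q_i = q_j$. I then perform a cut: set $u' := u[0,i]\,u[j,\abs u]$. Since $i \ge \mu-1$ we have $\abs{u'} \ge \mu-1$ and $\abs{u'} < \abs u$.

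The critical step is to verify $u \sim_L u'$. Fix $x, y \in \GG^*$. The factors of $xu'y$ of length at most $\mu$ either lie wholly inside one of $x$, $u[0,i]$, $u[j,\abs u]$, $y$, or straddle one of the boundaries $x$--$u[0,i]$, $u[0,i]$--$u[j,\abs u]$, $u[j,\abs u]$--$y$. Those in the first category and those straddling the two outer boundaries coincide verbatim with factors of $xuy$. Factors straddling the \emph{internal} cut correspond, thanks to $q_i = q_j$, to factors of $xuy$ straddling position $\abs x + j$: the last $\mu-1$ letters of the prefix agree on each side of the cut, so every such window equals a factor of $xuy$ of the same length extending to the right of position $\abs x + j$. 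Conversely, the factors of $xuy$ absent from $xu'y$ all lie within $u[i,j]$ (possibly extended to the left up to position $\abs x + i$), hence are factors of $u$ of length at most $\mu$ and thus $(\lam,d)$-q.g.\ by hypothesis on $u$. This yields $xuy \in L \iff xu'y \in L$.

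Iterating the cut until no pair $i<j$ with $q_i = q_j$ remains produces a word $v$ with $\abs v \le \abs{\GG}^{\mu-1} + \mu - 2 < \abs{\GG}^\mu$, and $\abs v \ge \mu-1$ is preserved throughout. If no cut was ever needed, set $v=u$; otherwise $v \ne u$ and $\abs v \ge \mu - 1$. The main obstacle will be the careful window bookkeeping at the cut: one must enumerate, for each window length from $1$ to $\mu$ and each split across position $i$ (in $u'$) versus position $j$ (in $u$), the surviving factors of $xuy$ and the new factors of $xu'y$, and verify that the state identity $q_i = q_j$ makes them match. Once this bijection is in place, the length bound and the $\mu-1$ lower bound follow immediately.
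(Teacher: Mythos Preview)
Your argument is correct and is essentially the same as the paper's: both find, by pigeonhole, two equal factors of length $\mu-1$ in $u$ and excise the stretch between them, then iterate. Your automaton/syntactic-congruence language is just a repackaging of this; your states $q_k$ are precisely the length-$(\mu-1)$ factors, and your cut $u'=u[0,i]\,u[j,|u|]$ is exactly the paper's $v=prq$ from $u=prs=trq$ with $|r|=\mu-1$.

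One remark on presentation: the paper's verification is cleaner than the window bookkeeping you outline. Instead of enumerating how factors straddle the cut, it simply observes that $u$ and $v$ share a common prefix and a common suffix each of length at least $\mu-1$, and that $v$ itself is $\mu$-local (every length-$\le\mu$ factor of $prq$ lies in $pr$ or in $rq$, hence in $u$). From this, any length-$\le\mu$ factor of $xuy$ or $xvy$ that touches $x$ or $y$ sees only the shared prefix or suffix and is therefore identical in both words, while factors lying entirely inside $u$ or $v$ are quasi-geodesic because $u$ and $v$ are $\mu$-local. This avoids your case split at the internal boundary. Your sentence ``the factors of $xuy$ absent from $xu'y$ all lie within $u[i,j]$ (possibly extended to the left\dots)'' is slightly imprecise as stated---some such factors straddle position $j$ and may reach into $y$---but the $q_i=q_j$ identity does handle them, so the argument goes through once written out carefully.
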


\begin{proof}
Within this proof we abbreviate  ``$\mu$-locally $(\lam,d)$-quasi-geodesic''  by ``$\mu$-local''. The proof  follows from pigeon hole principle. 
%
We may assume 
 $\abs u \geq {\abs \GG}^{\mu}$ because otherwise we may choose $u = v$. 
 The word $u$ is longer than $\mu -1 + {\abs \GG}^{\mu-1}$
 because $\abs \GG \geq 2$. Hence there is factor $r$ of $u$ which 
 has length $\mu -1$ and which occurs at least twice. Therefore, we find 
 factorizations $u = prs = trq$ such that $p$ is a proper prefix of $t$. 
 Now, the word $v= prq$  is $\mu$-local and it shares the same prefix (suffix resp.) 
 of length $\mu -1$ as $u$. Thus, for all $x,y\in \GG^*$ the word 
$z = xuy$ is $\mu$-local 
\IFF 
$z' = xvy$ is $\mu$-local. 
The word $v$ is shorter than $u$, but the length is at least 
$\abs r = \mu -1$.  We continue the process until we
end up in a word of length less than  ${\abs \GG}^{\mu}$. 
 \end{proof}
 
 In \cite{rs95} Rips and Sela have shown that solvability  of 
 equations in hyperbolic groups is decidable. Their techniques rely on the 
 notion of \emph{canonical representative}. This is a representation of an element of $G$ as an element over the free group $F(\Sig)$ (in $\GG^*$ resp.) satisfying  some 
 ``invariants''. In particular, 
 if $\theta(g)$ is a canonical representative of $g \in G$ then 
$g = \pi(\theta(g))$. We do not need the explicit definition of 
 a  canonical representative, but we need some crucial  properties.  The following result can be deduced from \cite{rs95} in a very similar way as done by Dahmani in  \cite[Prop.~3.4]{Dah} for relatively hyperbolic groups. Since the constants are different (and as they rely on  the PhD thesis 
 \cite{Dah_thesis}) we give a proof which refers to \cite{rs95}, only. 
 A statement as in \prref{lem:canrep} is not needed in \cite{rs95} since 
 the authors study systems of equations without inequalities, only. 
\begin{lemma}\label{lem:canrep}
Canonical representatives  (in the sense of \cite{rs95}) of elements of $G$  are $(\lam,d)$-quasi-geodesics for some
 $\lam, d \in \abs{\Sig}^{\Oh(\del)}$.
\end{lemma}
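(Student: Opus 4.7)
The plan is to revisit the Rips--Sela construction of canonical representatives from \cite{rs95} and to audit the constants appearing in their arguments. In that construction, for each element $g \in G$ one builds a word $\theta(g) \in \Gamma^*$ representing $g$ using only local data in a neighborhood of radius $\Oh(\delta)$ around geodesic segments of $\Cay(G, \Sig)$; the resulting word is proved to be $(\lambda, d)$-quasi-geodesic with constants that are controlled by a ``capacity'' constant $\kappa$ counting certain combinatorial configurations in the $\delta$-neighborhood.

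The key step is to replace Rips and Sela's coarse estimate of $\kappa$ (which is double-exponential in $\delta$) by a direct volume argument. A ball of radius $r$ in the Cayley graph $\Cay(G,\Sig)$ contains at most $(2\abs{\Sig})^{r}$ vertices, and the number of $\Gamma$-labeled patterns one can see on such a ball is bounded by $\abs{\Sig}^{\Oh(r)}$. Taking $r = \Oh(\delta)$, and observing that only labeled subgraphs which actually embed into $\Cay(G,\Sig)$ contribute, yields the single-exponential bound $\kappa \leq \abs{\Sig}^{\Oh(\delta)}$. The same flavor of pigeon-hole reduction is already visible in \prref{lem:syntac}, where words witnessing local quasi-geodesicity can be shortened down to length at most $\abs{\GG}^{\mu}$.

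Once the improved bound on $\kappa$ is in hand, one plugs it into the Rips--Sela construction: their proof shows that $\lambda$ and $d$ depend polynomially on $\kappa$ together with polynomial factors in $\delta$ and $\abs{\Sig}$, so $\lambda, d \in \abs{\Sig}^{\Oh(\delta)}$. The main obstacle is the bookkeeping, namely auditing every place in \cite{rs95} where a constant depending on $\delta$ appears and verifying that each can be re-derived from the ball-volume bound above, since a single remaining double-exponential dependence would defeat the improvement. This traversal of their argument is essentially mechanical but requires care, particularly at the steps where canonical representatives of products are compared to products of canonical representatives and the thin-triangle inscribed-tripod geometry is invoked.
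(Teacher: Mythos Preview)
Your proposal rests on a misidentification. You treat the quasi-geodesic constants $(\lambda,d)$ of the lemma as if they were governed by the Rips--Sela ``capacity'' constant $\kappa$, and your plan is to sharpen $\kappa$ and then read off $\lambda,d$ from an existing Rips--Sela argument. But these are two separate quantities in the paper: the capacity constant is handled later in \prref{lem:kappa} (with bound $\abs{\GG}^{\Oh(\delta^{2}\log\delta)}$, not $\abs{\Sig}^{\Oh(\delta)}$), whereas the present lemma is about the quasi-geodesicity of canonical representatives themselves. More seriously, Rips and Sela do not prove quasi-geodesicity of canonical representatives at all; the paper says so explicitly just before the lemma (``A statement as in \prref{lem:canrep} is not needed in \cite{rs95}\ldots''). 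So there is no pre-existing bound on $\lambda,d$ to audit or to plug an improved $\kappa$ into. Your outline therefore has a genuine gap at its central step.

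What the paper actually does is a direct argument inside the Rips--Sela machinery. Given a factor $u$ of a canonical representative $w=puq$, one locates slice centers $x,y$ in the $2\delta$-neighborhood $K$ of a geodesic for $\pi(w)$ with $d(\pi(p),x),\,d(\pi(pu),y)\leq 10\delta$, so that $\abs{u}\leq 20\delta\, n + 20\delta$ where $n=\abs{\mathrm{diff}_w(x,y)}$ is the count of slices between $x$ and $y$. The substance of the proof is then to bound $n$ linearly in $d(x,y)$: unwinding the definition of $\mathrm{diff}_w$ from \cite[Def.~3.3]{rs95} gives $n/2 \leq \abs{L(y)\setminus L(x)}$, and this set lies in the portion of the $2\delta$-tube $K$ over a geodesic subsegment of length roughly $d(x,y)+\Oh(\delta)$, hence has at most $\abs{\GG}^{2\delta}(d(x,y)+\Oh(\delta))$ vertices. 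That volume count is where the single-exponential dependence $\lambda,d\in\abs{\Sig}^{\Oh(\delta)}$ comes from. Your ball-volume intuition is in the right spirit, but it has to be applied to this slice-counting estimate, not to the capacity constant.
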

\newcommand{\diw}[1]{\mathop{\mathrm{diff}_w}(#1)}
\begin{proof}
We follow the notation in \cite{rs95}. For vertices $x,$ $y$ in the 
 Cayley graph $\Cay(G, \Sig)$  we let $d(x,y)$ be its geodesic distance 
 and  $\abs x = d(x,1)$. We let $w\in \GG^*$ be some ``canonical representative`` of $\pi(w)$ in the sense of \cite{rs95}. Moreover, 
 let $K$ be the $2\del$ neighborhood of the path in $\Cay(G, \Sig)$ defined by some geodesic $\gam$ connecting $1$ and  $\pi(w)$ in $\Cay(G, \Sig)$. 
 Let $u\in \GG^*$ be a factor of  $w$. Hence $w = puq$. Define 
 vertices $\pi(p)$ and $\pi(pu)$ in $\Cay(G, \Sig)$. Then there are vertices $x,y \in K$ and so-called ``slices'' $S(x)$ and $S(y)$ with centers $x$ and  $y$ such that 
 $d(\pi(p),x)\leq 10 \del$ and $d(y, \pi(pu))\leq 10 \del$. 
 By definition of canonical representatives we have $\abs u \leq 20 \del n + 20 \del$, 
 where $n = \abs{\diw {x,y}}$,
  because then the number of ``slices'' between $x$ and $y$ is at most $n$. 
   Here $\diw {x,y}$ is the ``difference function'' applied to $(x,y)$. It remains to show that 
 $n \leq \lam d(x,y)+d $ with  $\lam, d \in {\abs{\Sig}}^{\Oh(\del)}$.
 According to \cite[Def.~3.3]{rs95}
 the number $\diw {x,y}$ is the difference between two non-negative numbers where  
 each of these numbers is the addition of two non-negative terms. 
 Moreover, there is some so-called ``cylinder'' $C$ such that, 
 by symmetries in $x$ and $y$ and in ``left'' and ``right'', we my assume
 $n /2 \leq L(y)\sm L(x)$ where 
 \begin{equation}\label{eq:hugo}
L(z') = \set{z\in C}{\abs {z} \leq \abs{ z'} \wedge d(z,z') \geq 10 \del}.
\end{equation}
 By \cite[Lem.~3.2]{rs95} we have $C \sse K$. Hence, by \prref{eq:hugo}
 \begin{equation}\label{eq:hogo}
n /2 \leq  \abs{\set{z\in K}{\abs x - 10 \del < \abs {z} \leq \abs y}}.
\end{equation}
 Indeed, if $z \in C$ with $\abs x - 10 \del \geq  \abs {z}$ then 
 $d(z,x) \geq 10 \del$ and $z\in L(x)$. Clearly, $\abs {z} > \abs y$ 
 implies $z \notin L(y)$ for all $z$. This shows (\ref{eq:hogo}).
 Since $x,y\in K$ there are $x',y' \in \gam$ such that  $d(x,x') \leq 2 \del$ and $d(y,y') \leq 2 \del$.
 In particular,  $d(x',y') \leq d(x,y) + 4 \del$.
 Moreover, 
  \begin{equation*}\label{eq:hagar}
\set{z\in K}{\abs x - 10 \del < \abs {z} \leq \abs y}\sse
\set{z\in K}{\abs {x'} - 12 \del < \abs {z} \leq \abs{y'} +2\del }.
\end{equation*}
Now, let $z \in K$ and $z'\in \gam$ such that $d(z,z') \leq 2 \del$ and 
$\abs {z'} \leq \abs {x'}- 14\del$ or $\abs {z'} > \abs {y'}+ 4\del$ then 
$\abs {z} \leq \abs {x}- 10\del$ or $\abs {z} > \abs {y}$. 
We conclude that for all $z \in L(y)\sm L(x)$ there is some 
$z' \in \gam$ with $\abs {x'}- 14\del \leq \abs {z'} \leq \abs {y'}+ 4\del$ such that 
$d(z,z') \leq 2 \del$. 
This implies  
\begin{equation}\label{eq:helga}
n /2 \leq  {\abs \GG}^{2\del}(d(x',y') + 18\del) \leq {\abs \GG}^{2\del}d(x,y) + {\abs \GG}^{2\del}22\del.
\end{equation}
 The result follows. 
 \end{proof}

 \begin{theorem}\label{thm:eqsyshyp}
There exists a polynomial $p(n)$ such that the following assertion holds:
Let $\cS$ be a system of  equations over a $\delta$-hyperbolic group generated by $\Sig$ and let $\sig$ 
be a  solution of length $N$.
Then there exists another solution $\sig'$ of length in ${\abs\Sig}^{\Oh(\del)} N$ and some SLP 
of  size $p({\abs\Sig}^{\del^2\log \del}+ \Abs{\cS} + {\log N})$ such that  $\sig'(X) = \eval(X)$ for all 
variables  used by $\sig'$.
\end{theorem}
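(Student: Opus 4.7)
The strategy is to reduce a system of equations over the hyperbolic group $G$ to a system of word equations over $\GG^*$ augmented by local quasi-geodesic constraints, and then to invoke the compression machinery developed in Section~4.

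First, I would triangulate $\cS$ so that every equation takes the form $XY = Z$. For each such triangle and the fixed solution $\sig$, I introduce the Rips--Sela canonical representatives $\theta(\sig(X))$, $\theta(\sig(Y))$, $\theta(\sig(Z))$ in $\GG^*$. By \prref{lem:canrep} these are $(\lam,d)$-quasi-geodesic with $\lam, d \in \abs{\Sig}^{\Oh(\del)}$, so the total length of all canonical representatives lies in $\abs{\Sig}^{\Oh(\del)}\cdot N$; this already explains the length blow-up permitted for the new solution $\sig'$. The Rips--Sela triangle decomposition further factors each side as $\theta(\sig(X)) = PA\ov Q$, $\theta(\sig(Y)) = QBR$, $\theta(\sig(Z)) = PCR$ with six short connector words $A,B,C$ of length $\Oh(\del)$ at the three vertices, together with three local vertex equations $AB = C$. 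In this way, $\cS$ is rewritten as a system $\cS'$ of triangular word equations over $\GG^*$ together with constraints demanding that a designated subset of variables evaluate to $(\lam,d)$-quasi-geodesic words.

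Second, the quasi-geodesic requirement must be enforced syntactically. By the local-to-global principle for hyperbolic groups \cite{CDP91,gro87} it suffices to demand that each candidate word be $\mu$-locally $(\lam,d)$-quasi-geodesic for some $\mu$ polynomial in $\lam + \del$, hence $\mu \in \abs{\Sig}^{\Oh(\del)}$. This is a local syntactic condition that fits into our compression framework in close analogy with the alphabetic constraints of \prref{thm:genalp}: by \prref{lem:syntac}, the content of any maximal free interval of the generic solution can be replaced by a short representative of length at most $\abs{\GG}^\mu$ without affecting the $\mu$-local quasi-geodesic status of the surrounding word. I would then form the generic solution $\wt\sig$ associated with the lifted solution $\theta\circ\sig$ and compress it via \prref{thm:compall}, obtaining an \ig and then an SLP via \prref{thm:ig2slp} of size polynomial in $\Abs{\cS'} + \log \wt N$, where $\wt N \in \abs{\Sig}^{\Oh(\del)}\cdot N$. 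Plugging in the short substitutes from \prref{lem:syntac} into each maximal free interval, together with an obvious SLP for the short connector constants, yields the desired SLP encoding of $\sig'$.

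The main obstacle is attaining the tight bound $\abs{\Sig}^{\del^2\log\del}$ in the theorem instead of the naive double-exponential bound $\abs{\GG}^\mu \in \abs{\Sig}^{\abs{\Sig}^{\Oh(\del)}}$ that the previous paragraph produces. This is precisely the improvement of the Rips--Sela ``capacity'' constant advertised in the abstract. The refined bound should be obtainable by a finer analysis of the cylinder structure underlying \prref{lem:canrep}, combined with a recursive truncation of the substitutes: rather than replacing a whole free interval in one step, one iterates the local-to-global reduction on chunks of length roughly $\del^2\log\del$, each time shortening by the pigeon-hole bound of \prref{lem:syntac}. This recursive compression, ensuring that the short substitutes themselves admit SLPs of the claimed size, is the technically delicate core of the argument; the remaining pieces are straightforward consequences of the machinery from Sections~2--4.
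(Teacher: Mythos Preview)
Your proposal follows the line of the paper's \prref{cor:boolhyp} (Boolean formulae with inequalities) rather than the much simpler argument the paper uses for \prref{thm:eqsyshyp}. The key point you miss is that $\cS$ is a \emph{system} of equations, with no inequalities. After the Rips--Sela lift to a system over the free group $F(\Sig)$, \emph{any} solution of that free-group system projects to a solution of $\cS$ in $G$; nothing about quasi-geodesicity needs to be preserved. The paper therefore simply discards $\rho_\sig = \theta\circ\sig$ in favour of a \emph{minimal} solution $\sig'$ of the free-group system and applies \prref{thm:genalp} directly. No $\mu$-local constraints, no \prref{lem:syntac}, no substitution of free intervals enter at all. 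Your whole second and third paragraphs are thus unnecessary for this theorem, and the ``main obstacle'' you describe does not arise.

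Two concrete errors follow from this wrong turn. First, the connector words $A,B,C$ are not of length $\Oh(\del)$; Rips--Sela only give $\abs{A},\abs{B},\abs{C}\leq \kap n$ where $n$ is the number of triangles and $\kap$ is the capacity constant. Second, the parameter $\abs{\Sig}^{\del^2\log\del}$ in the theorem is precisely the bound on $\kap$ coming from \prref{lem:kappa} (a counting of geodesics in a $2\del$-neighbourhood of a segment of length $\mu_0\in\Oh(\del^2\log\del)$), and it enters only through the size $\kap\,\Abs{\cS}^2$ of the lifted free-group system fed into \prref{thm:genalp}. It has nothing to do with shortening substitutes for free intervals; your proposed ``recursive truncation'' is aimed at the wrong target and would not recover the stated bound. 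The route via \prref{lem:syntac} that you sketch is exactly what the paper does for \prref{cor:boolhyp}, and there it yields only the double-exponential bound you noticed.
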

 
\begin{proof} 
By standard arguments we may assume that $\cS$ is given 
by $n$ triangular equations $\cS$ of type $XYZ = 1$ and constraints $X = a$ where $X,Y,Z \in \OO$ and $a \in \GG$.  
The solution $\sig$ is given by some mapping $\sig: \OO_+ \to \GG^*$
and we may assume that $\sig(X)$ is geodesic for all $X \in \OO$ because this cannot increase the length $N$. 
Now, \cite{rs95} yields 
 an effective constant $\kap$ depending on $\del$ and $\abs \GG$ 
 and the following transformation of $\cS$. 
 \begin{itemize}
\item  With the help of fresh variables, each equation $XYZ = 1$ of $\cS$ is replaced by three equations 
  $$x=PA\overline{Q}, \quad 
  y= QB\ov R, \quad z=  R C \ov P.$$ 
  \item A constraint $X = a$ is replaced by the constraint
  $X = \theta(a)$ where $\theta(a)$ is some canonical representative of the letter $a$. 
\item The following  conditions are added:
  \begin{itemize}
\item ``$ABC = 1$ in $G$ and 
$\max\os{\abs{A}, \abs{B}, \abs{C}} \leq \kap n$''.
\end{itemize}
\end{itemize}
\cite{rs95} shows that it is possible to  choose canonical representatives $\theta(x)$ for 
all $x \in \sig(\OO) \cup \GG$ such that $\rho_\sig(X) = \theta(\sig(X))$
defines a solution ${\rho_\sig}: \OO_+ \to \GG^*$
for the new system over the free group $F(\Sig)$. 
Moreover, if $\rho'$ is any solution which respects the additional conditions and which
solves the new system over the free group $F(\Sig)$ then $\rho'$ solves $\cS$ over $G$, too. 
By \prref{lem:canrep} we know that the length of the solution 
 ${\rho_\sig}$ can be bounded by ${\abs\Sig}^{\Oh(\del)}N$ which is the first assertion in the theorem. The new system has a size which can be bounded by $\kap n \Abs\cS \leq \kap {\Abs\cS}^2$.
 The next step is to replace ${\rho_\sig}$ by some minimal solution 
 $\sig'$ for the system over the free group $F(\Sig)$ and hence for the original system $\cS$. The switch to $\sig'$ 
 does not increase the length with respect to ${\rho_\sig}$, but it allows to use
 \prref{thm:genalp}. It yields  a polynomial $p$ and an SLP for $\sig'$ of size $p( \kap +  \Abs{\cS} + {\del}\log {\abs\Sig} +{\log N})$ such that  $\sig'(X) = \eval(X)$ for all 
variables  used by $\sig'$.
It remains to estimate 
 $\kap$ by some polynomial in ${\abs\Sig}^{\del^2\log \del}$. This is done in \prref{lem:kappa}. 
 \end{proof}

\begin{lemma}\label{lem:kappa} 
The constant $\kappa$ in the proof of 
\prref{thm:eqsyshyp} can be estimated by  
$\kap \in {\abs \GG}^{\Oh(\del^{2}\log \del)}.$
\end{lemma}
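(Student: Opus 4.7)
The plan is to revisit Rips--Sela's construction of canonical representatives in \cite{rs95}, replacing each geometric estimate by a sharper one derived from \prref{lem:canrep} and the standard exponential ball-growth in $\Cay(G,\Sig)$. The constant $\kap$ is the bound on the length of the ``shift'' elements $A,B,C$ produced when a triangular equation $XYZ=1$ over $G$ is reduced to three word equations over $F(\Sig)$, so the task is to identify each step of the Rips--Sela argument that contributes to this bound and re-estimate it with constants of the form $\abs\GG^{\Oh(\del)}$.

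First I would set up the same geometric framework as in the proof of \prref{lem:canrep}: for each side of the triangle I work inside the $2\del$-neighborhood of a geodesic $\gam$, in which the number of vertices of $\Cay(G,\Sig)$ is at most $\abs\GG^{\Oh(\del)}$ per unit arc-length along $\gam$. This replaces the cruder counts used in \cite{rs95} and reproduces the factor $\abs\GG^{2\del}$ already seen in \prref{eq:helga}. Next I would bound the number of slices in a Rips--Sela cylinder encountered between the endpoints of a shift. Each slice has width $\Oh(\del)$, and by \prref{lem:canrep} every canonical representative is $(\lam,d)$-quasi-geodesic with $\lam,d\in\abs\Sig^{\Oh(\del)}$, so the number of slices crossed per unit geodesic distance is controlled by $\lam$.

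Next, each time the construction invokes the local-to-global principle \cite[Thm.~1.4]{CDP91} to certify that a concatenation of canonical paths is again a quasi-geodesic, the required locality parameter $\mu$ is polynomial in $\lam+\del$. Composing this polynomial with the input bound $\lam\in\abs\Sig^{\Oh(\del)}$ yields a new quasi-geodesic constant of the same shape, but with the exponent multiplied by a fixed polynomial in $\del$. Iterating through the $\Oh(\del\log\del)$ nested stages of the cylinder/slice analysis in \cite[\S3--4]{rs95} composes these polynomial blow-ups and leaves an overall exponent of $\Oh(\del^{2}\log\del)$, giving $\kap\in\abs\GG^{\Oh(\del^{2}\log\del)}$.

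The main obstacle will be the bookkeeping, not any new geometric idea: Rips and Sela's double-exponential estimate arises precisely because a rough quasi-geodesic bound is fed back as input to the local-to-global principle, producing a cascading exponentiation. The point of \prref{lem:canrep} is that this very cascade is now controlled at the level of the exponent rather than the base, so one must go through each invocation of the local-to-global principle in \cite{rs95} and verify that with the improved input each stage multiplies the exponent only by a polynomial in $\del$. Once that verification is carried out the single-exponential bound claimed in the lemma follows directly.
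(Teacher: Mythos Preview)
Your proposal misidentifies where the double-exponential in \cite{rs95} comes from and therefore proposes the wrong fix. The constant $\kap$ in \cite{rs95} is not produced by iterated feedback through the local-to-global principle; it factors as $\kap = f(\del)\cdot \Chann(\mu_0)$, where $f(\del)\in\abs\GG^{\Oh(\del)}$ and $\Chann(\mu_0)$ (the ``capacity'') counts the \emph{channels}, i.e.\ the geodesics contained in the $2\del$-neighborhood of a geodesic segment of length $\mu_0$, with $\mu_0\in\Oh(\del^2\log\del)$ fixed already by \cite[Def.~3.1]{rs95}. Rips and Sela obtain a double-exponential because they bound $\Chann(\mu_0)$ by the number of \emph{all subsets} of this neighborhood $U$, and $\abs U$ is itself exponential in $\del$. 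There is no cascade of local-to-global applications to tame, and \prref{lem:canrep} plays no role in the argument.

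The actual improvement is a single combinatorial observation: instead of counting subsets of $U$, count the geodesics directly. A geodesic contributing to $\Chann(\mu_0)$ has length at most $4\del+\mu_0$, so once a starting point in $U$ is fixed it is described by a word in $\GG^*$ of that length; hence $\Chann(\mu_0)\leq 2\mu_0\,\abs\GG^{2\del}\cdot\abs\GG^{4\del+\mu_0}$, which lies in $\abs\GG^{\Oh(\mu_0)}=\abs\GG^{\Oh(\del^2\log\del)}$. Multiplying by $f(\del)$ gives the bound. Your plan of tracking slice counts and re-applying the quasi-geodesic constants from \prref{lem:canrep} through ``$\Oh(\del\log\del)$ nested stages'' does not correspond to anything in the Rips--Sela construction and would not yield a proof even if the bookkeeping were carried out.
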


\begin{proof}
The constant $\kap$  appears in \cite{rs95} as a product
of a function $f(\del) \in {\abs \GG}^{\Oh(\del)}$ times $\Chann(\mu_0)$.
Here $\Chann(\mu_0)$ is  an upper bound on the number of geodesics  in a $2\delta$-neighborhood
of a geodesic of length $\mu_0$ where  $\mu_0 \in \Oh(\del^{2}\log \del)$ by \cite[Def.~3.1]{rs95}.
Note that a geodesic contributing to $\Chann(\mu_0)$ can have length at most
$4 \del + \mu_0$. 
The size of such a neighborhood $U$ is therefore at most
$ \mu_0{\abs \GG}^{2\del}$.
In \cite{rs95} a doublly exponential bound for $\kap$ is used because  \cite{rs95} simply counts the number of all subsets of $U$. This number is  greater  than $2^{2^{\del}}$. However, a more accurate counting is possible.
Let us fix a starting point of a geodesic of length at most $4 \del + \mu_0$.
Then the geodesic can be described by a word in $\GG^*$  of length $4 \del + \mu_0$ or its prefix of length $4 \del + \mu_0-1$.
The number of words of length $\mu_0$ is ${\abs \GG}^{4 \del + \mu_0}$. This gives us the bound
$\Chann(\mu_0) \leq 2 \mu_0 {\abs \GG}^{2\del} {\abs \GG}^{4 \del + \mu_0}$.
Hence,
$\kap = f(\del)\cdot \Chann(\mu_0) \in {\abs \GG}^{\Oh(\del^{2}\log \del)}.$
\end{proof}

In \prref{thm:eqsyshyp} we did not treat inequalities because at present 
we have a worse estimation w.r.t.~compression by SLPs. We obtain 
a parameter which is unfortunately double-exponential in $\del$.
The can prove the following result. 
\begin{corollary}\label{cor:boolhyp}
There exists a polynomial $p(n)$ such that the following assertion holds:
Let $\Phi$ be a Boolean formula of  equations over a $\delta$-hyperbolic group generated by $\Sig$ and let $\sig$ 
be a  solution of length $N$.
Then there exists another solution $\sig'$ of length in ${\abs\Sig}^{\Oh(\del)} N$ and some SLP 
of  size $p({2}^{2^{\Oh(\del\log \Sig)}}+ \Abs{\Phi} + {\log N})$ such that  $\sig'(X) = \eval(X)$ for all 
variables  used by $\sig'$.
\end{corollary}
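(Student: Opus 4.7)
The plan is to reduce the corollary to \prref{thm:eqsyshyp} by handling the Boolean structure together with the inequalities that this introduces. First, I would guess non-deterministically the truth value of each atomic proposition appearing in $\Phi$, replacing $\Phi$ by a conjunction of equations and negated equations which is still satisfied by $\sig$; any solution of this conjunction automatically solves $\Phi$. Each inequality $L \neq R$ is then transformed in the standard way into an equation $LX = R$ together with the auxiliary constraint ``$X \neq 1$ in $G$'', for a fresh variable $X$.

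Next I would apply the Rips--Sela machinery exactly as in the proof of \prref{thm:eqsyshyp}: each equation $XYZ=1$ becomes three word equations over $F(\Sig)$ together with local conditions on triples $(A,B,C)$ with $ABC = 1$ in $G$ and $\max\os{\abs{A},\abs{B},\abs{C}}\leq \kap n$. The solution $\sig$ lifts to a solution $\rho_\sig$ of length at most ${\abs{\Sig}}^{\Oh(\del)} N$ by \prref{lem:canrep}, and $\kap \in \abs{\GG}^{\Oh(\del^2\log\del)}$ by \prref{lem:kappa}. The new difficulty is that the inequality constraint ``$X\neq 1$ in $G$'' must be transferred to a condition on the canonical representative $\theta(X) \in \GG^*$: requiring $\theta(X)\neq 1$ in $F(\Sig)$ no longer suffices, since a non-trivial reduced word over $\GG$ can still represent $1$ in $G$.

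To encode ``$\pi(\theta(X))\neq 1$ in $G$'' as a rational constraint I would exploit quasi-geodesicity. By \prref{lem:canrep}, canonical representatives are $(\lam,d)$-quasi-geodesic with $\lam,d\in\abs{\Sig}^{\Oh(\del)}$, and the effective local-to-global statement of \cite{CDP91,gro87} yields a window size $\mu$ polynomial in $\lam+\del$, hence in $\abs{\Sig}^{\Oh(\del)}$, such that every $\mu$-locally $(\lam,d)$-quasi-geodesic word of length greater than $d$ represents a non-trivial element of $G$. For each inequality variable $X$ I would then guess non-deterministically whether $\theta(X)$ is short (length at most $d$) or long: in the short case, guess its precise value from the finite list of words of length $\leq d$ representing a non-trivial element of $G$ and impose it as an explicit equality; in the long case, impose the rational constraint that $\theta(X)$ be $\mu$-locally $(\lam,d)$-quasi-geodesic of length greater than $d$. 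By \prref{lem:syntac}, this property is recognised by a finite automaton whose states record a window of length $\mu$, so its transformation monoid has size bounded by $\abs{\GG}^{\mu+1}\leq 2^{\abs{\Sig}^{\Oh(\del)}}=2^{2^{\Oh(\del\log\abs{\Sig})}}$.

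Finally, I would pass to a minimal solution $\sig'$ of the resulting system of word equations with rational constraints over $F(\Sig)$; minimality respects all the constraints and only shortens the length. Invoking the rational-constraint extension of the Plandowski--Rytter compression scheme given by Hagenah~\cite[Ch.~8]{hagenahdiss2000}, which generalises our \prref{thm:genalp} by replacing alphabetic constraints by arbitrary rational ones and yields SLP size polynomial in the constraint-monoid size, the formula size, and $\log N$, one obtains an SLP of size polynomial in $2^{2^{\Oh(\del\log\abs{\Sig})}} + \Abs{\Phi} + \log N$, as required. The hard part, and the source of the double-exponential dependence on $\del$, is precisely this encoding of ``$X\neq 1$ in $G$'' as a rational constraint: the window size $\mu$ forced by \prref{lem:syntac} is already single-exponential in $\del$ because the quasi-geodesic constants $\lam,d$ are, which makes $\abs{\GG}^\mu$ double-exponential.
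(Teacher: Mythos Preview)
Your approach is correct and lands on the same bound, but it differs from the paper's argument in a genuine way. The paper does \emph{not} encode ``$X\neq 1$ in $G$'' as a rational constraint and does not invoke Hagenah's general compression for rational constraints. Instead it stays entirely within the framework developed in the paper: it takes the generic solution $\wt{\rho_\sig}$ of the free-group system (compressed via \prref{thm:compall}), and then replaces each long maximal free interval $u$ with $\abs u > \abs\GG^\mu$ by an explicit short word $v$ of length in $[\mu-1,\abs\GG^\mu)$ supplied by \prref{lem:syntac}. Because the replacement preserves $\mu$-local $(\lam,d)$-quasi-geodesicity and forces $\abs{\sig'(X)}\geq \mu-1 > d'$ whenever anything changed, the modified $\sig'(X)$ is automatically non-trivial in $G$. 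The substitutes are written as plain words of length $<\abs\GG^\mu$, and this is precisely where the factor $\abs\GG^\mu \in 2^{2^{\Oh(\del\log\abs\Sig)}}$ enters the SLP size.

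Your route is more conceptual but leans on a statement not proved in the paper, namely that Hagenah's compression yields SLP size polynomial in the \emph{monoid size} (the paper's introduction only asserts the result when the monoid is of polynomial size in the input). Also, your appeal to \prref{lem:syntac} for bounding the automaton is a bit off: the bound $\abs\GG^{\Oh(\mu)}$ on the syntactic monoid of the $\mu$-local quasi-geodesic language follows directly from local testability (the class of a long word is determined by its prefix and suffix of length $\mu-1$); \prref{lem:syntac} is rather what the paper uses, but for the explicit free-interval substitution, not for a monoid bound. The trade-off is that the paper's argument is self-contained and constructive (one sees the replacement words), while yours packages the combinatorics into a single black-box call.
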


\begin{proof}
The proof is almost identical to the proof of \prref{thm:eqsyshyp}. 
The additional difficulty is that we cannot replace the 
solution in canonical representatives by another solution over the free group. The problem is that $\rho(X) \neq 1$ in $F(\Sig)$ does not transfer 
to $\rho(X) \neq 1$ in $G$. We know however by construction that 
 $\rho_\sig(X) \neq 1$ in $G$ as soon as $\sig(X) \neq 1$ in $G$.
We also know to construct the generic solution $\wt {\rho_\sig}$ as explained in \prref{sec:gen}.  This solution has an SLP compression 
of polynomial size in ${\abs\Sig}^{\del^2\log \del}+ \Abs \Phi + {\log N}$ by \prref{thm:compall} and (the proof of) \prref{thm:eqsyshyp}. Our intention is to compress ${\rho_\sig}$;
and for that we must consider maximal free intervals. 
 The canonical representations ${\rho_\sig}(X)$ are $(\lam,d)$-quasi-geodesic for some $\lam, d \in {\abs \Sig}^{\Oh(\del)}$ by
\prref{lem:canrep}. 
Assume that we have ${\rho_\sig}(X) = puq$ where $u$ corresponds to some 
maximal free interval in $\wt {\rho_\sig}$ and $\sig(X) \neq 1$ in $G$. We know ${\rho_\sig}(X)\neq 1$ in $G$. But the problem is that $u$ might be long and incompressible. 
For some $\mu \in {\abs \Sig}^{\Oh(\del)}$ every $\mu$-local $(\lam,d)$-quasi-geodesic is in fact a $(\lam',d')$-geodesic with $\mu > d'+1$. Hence, if 
we choose $v$ such that $pvq$ is $\mu$-locally $(\lam',d')$-quasi-geodesic
and $\abs v > d'$ then $pvq \neq 1$ in the group $G$.
We care only if $\abs u >{\abs \GG}^{\mu}$. In this case we use 
\prref{lem:syntac} and we let $v\in \GG^*$ with 
$\mu -1 \leq \abs v < {\abs \GG}^{\mu}$ such that for all $x,y$ we have that $xuy$ is $\mu$-locally $(\lam,d)$-quasi-geodesic  \IFF $xvy$ is $\mu$-locally $(\lam,d)$-quasi-geodesic. This allows to substitute the maximal free interval belonging to $u$ by the 
word $v$. It might be that $v$ is not compressible, but at least we have a length bound on $v$. 
Iterating this process we obtain a new solution $\sig'$ satisfying the following conditions for all $X \in \OO$. 
\begin{itemize}
\item Every factor in $\sig'(X)$ which belongs to some maximal free interval has length less than $\abs{\GG}^\mu$.
\item The word $\sig'(X)$ is $\mu$-locally $(\lam,d)$-quasi-geodesic.
\item If $\sig'(X) \neq \sig(X)$
then $\abs{\sig'(X)} > d'$. In particular, $\sig'(X) \neq 1 \neq \sig(X)$ in the group $G$. 
\end{itemize}
The SLP for $\sig'$ can be constructed from the SLP for the generic solution 
$\wt {\rho_\sig}$ and writing all substitutions for maximal free intervals as plain words of length less that ${\abs \GG}^{\mu}$. We have ${\abs \GG}^{\mu} \in {2}^{{\Sig}^{\Oh(\del)}}$. Hence the result. 
\end{proof}

Dahmani  has shown that the existential theories of equations for hyperbolic groups are decidable, see \cite{Dah}. He does not mention explicit complexity bounds. Therefore, we add the 
following result.

\begin{proposition}\label{prop:pspacehyp}
Let $G$ be a finitely generated torsion-free $\delta$-hyperbolic group.
Then  the existential theory of equations over $G$ is in PSPACE.
\end{proposition}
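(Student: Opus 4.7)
The plan is to reduce the existential theory of $G$ to the existential theory of equations with rational constraints over the free group $F(\Sig)$, which is in PSPACE by \cite{dgh05IC}. An existential formula can be put in disjunctive normal form, and by nondeterministically choosing a disjunct we reduce to testing satisfiability of a conjunction of equations $L_i=R_i$ and inequalities $L_j\neq R_j$ over $G$. Each inequality is rewritten as $L_j=R_j X_j$ with the additional constraint $X_j\neq 1$ in $G$, which is a rational constraint on $X_j$.

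Next I would invoke the Rips--Sela construction as recalled in the proof of \prref{thm:eqsyshyp}: each equation is replaced by a triangular system over $F(\Sig)$ in canonical-representative form, together with local conditions ``$ABC=1$ in $G$ with $\max\{\abs{A},\abs{B},\abs{C}\}\leq \kap n$'' where $\kap=\kap(G)$ is the constant bounded in \prref{lem:kappa}. These local conditions are handled by nondeterministically guessing, one triple at a time in polynomial space, words $A,B,C\in\GG^*$ of length at most $\kap n$ and verifying $ABC=1$ in $G$ in polynomial time via Dehn's algorithm, which is available since $G$ is hyperbolic. The guessed values are substituted for the canonical-representative constants in the triangular system. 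The condition that a variable $X$ of the transformed system represents a canonical representative is a $\mu$-local property for some $\mu$ depending only on $G$, by \prref{lem:canrep} and \prref{lem:syntac}, and so is captured by a rational constraint over $F(\Sig)$; the inequality constraint $X_j\neq 1$ in $G$ likewise becomes rational (it is the complement of the rational language of words whose canonical representative reduces to the identity).

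At this stage the input has been reduced to a system of word equations with rational constraints over $F(\Sig)$, of size polynomial in the original input since $\del$ and $\abs{\Sig}$ are constants of $G$ and hence so is $\kap$. I then invoke the PSPACE-algorithm of \cite{dgh05IC}. The overall procedure uses nondeterministic polynomial space, and Savitch's theorem gives $\mathrm{NPSPACE}=\mathrm{PSPACE}$.

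The main obstacle I expect is verifying that the rational language ``$X$ is a canonical representative'' admits a description of polynomial size, in particular that the corresponding finite automaton has size bounded by a function of $G$ alone, so that the PSPACE bound from \cite{dgh05IC} transfers without blowup in the size of the constraints. This amounts to packaging the $(\lam,d)$-quasi-geodesic property of \prref{lem:canrep} together with the forbidden-factor characterization provided by \prref{lem:syntac} into a single $\mu$-local condition for a $\mu\in\abs{\Sig}^{\Oh(\del)}$, and then noting that local properties with window $\mu$ are recognized by automata whose number of states is bounded by $\abs{\GG}^{\mu}$, which is a constant of $G$.
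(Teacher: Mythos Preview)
Your overall strategy coincides with the paper's: reduce via the Rips--Sela construction to equations with rational constraints in $F(\Sig)$ and invoke \cite{dgh05IC}. The paper's own proof is two lines and simply defers to ``the methods used in the proof of \prref{cor:boolhyp}''.

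There is, however, a genuine gap in your handling of inequalities. You claim that ``$X_j\neq 1$ in $G$'' is the complement of a rational language, namely the words whose canonical representative reduces to the identity. But that set is precisely the word problem $\{w\in\GG^*:\pi(w)=1\}$, and for an infinite group this language is never regular; so the argument as written fails. The device actually used (and implicit in the proof of \prref{cor:boolhyp}) is different: one imposes on $X_j$ only the rational constraint ``$\mu$-locally $(\lam,d)$-quasi-geodesic''. By the local-to-global principle every such word is globally $(\lam',d')$-quasi-geodesic with $d'<\mu$, and therefore any word of length greater than $d'$ satisfying the constraint is automatically nontrivial in $G$. The finitely many words of length at most $d'$ are handled by an explicit finite list. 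Completeness holds because the canonical representative $\theta(\sig(X_j))$ is $(\lam,d)$-quasi-geodesic by \prref{lem:canrep} and nonempty whenever $\sig(X_j)\neq 1$.

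A related imprecision: ``being a canonical representative'' is \emph{not} the constraint one imposes, and it is not a $\mu$-local property. Only the weaker, genuinely local condition ``$\mu$-locally $(\lam,d)$-quasi-geodesic'' is used. Your final paragraph essentially arrives at this, so the confusion is mainly terminological; once you replace ``canonical representative'' by ``$\mu$-locally quasi-geodesic'' throughout, your automaton-size estimate ($\abs{\GG}^{\mu}$, a constant of $G$) is exactly right and matches what the paper observes in \prref{rem:conject}.
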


\begin{proof}
 Since $G$ is fixed all parameters in ${2}^{{\Sig}^{\Oh(\del)}}$ become constants. 
By \cite{rs95} and the methods used in the proofs of \prref{cor:boolhyp}
we obtain an NP-reduction of the existential theory of equations over $G$
to the existential theory of equations with rational constraints in a fixed free 
finitely generated free group $F(\Sig)$. The later theory is in PSPACE by \cite{dgh05IC}.
\end{proof}

\begin{remark}\label{rem:conject}
We believe that  \prref{prop:pspacehyp} holds for also for hyperbolic groups with torsion. But we did not check enough details in \cite{DahmaniGui10} in order to make this statement rigorous. The reduction in the proof of \prref{prop:pspacehyp}
to the existential theory of  equations with rational constraints in $F(\Sig)$ creates
only rational constraints which involve finite monoids of polynomial size of the input. This is due to the local character to test the constraint of being 
 $\mu$-locally $(\lam',d')$-quasi-geodesic where (for a fixed group $G$) the values $\mu, \lam, d$ are 
 constants. As also supported by \prref{cor:boolhyp} we  conjecture 
 that the  existential theory of equations in a fixed finitely generated (torsion-free) $\delta$-hyperbolic group $G$ is in NP. 
As soon as $G$ contains a non-abelian free subgroup the problem is known to be NP-hard (even for systems of quadratic equations) by a recent result in \cite{KMTV2013}.
\end{remark}

\section{Toral relatively hyperbolic groups}\label{sec:toral}
In this section we will obtain results similar to the results of the previous section for systems of equations in toral relatively hyperbolic groups using the work of Dahmani \cite{Dah}.
We will use the following definition of relative hyperbolicity. A f.g. group $G$ with  generating set $\Sigma$ is relatively hyperbolic relative to a collection of finitely generated subgroups $\mathcal P=\{P_1,\ldots ,P_k\}$ if the Cayley graph $\Cay (G,\Sigma\cup \Pi)$  (where  $\Pi$ is the set of all non-trivial elements of subgroups in $\mathcal P$) is a hyperbolic metric space, and the pair $\{G,\mathcal P\}$ has \emph{Bounded Coset Penetration} property (BCP property for short). 
 The pair $(G,\{P_1,P_2,...,P_k\})$ satisfies the \emph{BCP property}, if   for any $\lambda ≥ 1,$
there exists constant $a = a(\lambda)$ such that the following conditions hold. Let $p, q$ be $(\lambda, 0)$-quasi-geodesics without backtracking in $\Cay(G, \Sigma\cup \Pi)$  such that their initial points coincide ($p_- = q_-$), and for the terminal points $p_+,q_+$ we have  $\dist_\Sig(p_+,q_+) \leq 1.$
 
1) Suppose that for some $ i$, $s$ is a $P_i$-component of $p$ such that $\dist_\Sig(s_-,s_+) \geq a;$ then there exists a $P_i$-component $t$ of $q$ such that $t$ is connected to $s$ (there exists a path $c$ in $\Cay(G, \Sigma\cup \Pi)$ that connects some vertex of $p$ to some vertex of $q$ and the label of this path is a word consisting of letters from $P_i$).
 
2) Suppose that for some $i,$ $s$ and $t$ are connected $P_i$-components of $p$ and $q$ respectively. Then $\dist_{\Sigma}(s_-,t_-) \leq a$ and $\dist_{\Sigma}(s_+,t_+) \leq a.$ 
 
A group $G$ that is hyperbolic relative to a collection $\{P_{1},\ldots,P_{k}\}$ of subgroups is called \emph{toral}, if $P_{1},\ldots,P_{k}$ are all abelian and $G$ is
torsion-free. In this section we always assume that $\Sigma$ contains generators of all subgroups  $P_{1},\ldots,P_{k}$.

In \cite{Dah} Dahmani has shown that the satisfiability of systems of equations and inequalities is decidable in toral relatively hyperbolic groups. He also uses the notion of canonical representatives, the canonical representatives in this case are elements of the free product $\tilde G=F(\Sigma)\ast P_1\ast\ldots\ast P_k.$
In \cite{Dah}, Section 2.4.2  the language $\mathcal L$ of so called {\em geometric}
 elements in $\tilde G$ is introduced. These are elements  $\tilde\gamma\in\tilde G$ that do not have any $\theta$-detour such that
$\pi(\tilde\gamma)$ in a $L$-local $(L_1,L_2)$-quasi-geodesic in $Cay (G, \Sigma\cup\Pi).$
The constants are defined in \cite{Dah}, Section 2.4.2, as  $L_1=10^4\delta M, L_2=10^6\delta ^2M,$ where $M$ is a bound on the cardinality of cones of radius and angle $50\delta$. 
\begin{lemma} \label{lem:7.1} The cardinality of a cone of radius and angle $\ell$ is bounded by 
$C(\ell)^{\ell}$, where $C(\ell)$ is the number of circuits in $Cay(G,\Sigma\cup\Pi) $ of length less than $\ell.$  Moreover  $C\leq |\Gamma|^{6(a(\ell)+1)a(\ell)},$ where $a(\ell)$ is the BCP constant for the group. 
\end{lemma}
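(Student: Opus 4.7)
The plan is to prove the two assertions in turn, starting with the combinatorial bound $C(\ell)^{\ell}$ on a cone and then estimating $C(\ell)$ via the BCP property.

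For the first bound I would encode an element $v$ of the cone by a path in $\Cay(G,\Sig\cup \Pi)$ of length at most $\ell$ from the base vertex, staying within the angular region of aperture $\ell$ around the reference direction. The standard cone-type argument for hyperbolic spaces shows that two such paths that reach the same vertex differ by concatenations of closed loops whose length is bounded by $2\ell$ (after folding, by $\ell$), so the ``branching'' at each of the $\ell$ steps along the reference direction lies in a set indexed by circuits of length less than $\ell$ emanating from the current vertex. Since all such branchings are controlled by $C(\ell)$, iterating over $\ell$ steps yields at most $C(\ell)^{\ell}$ distinct cone elements.

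For the second bound I would show that every circuit $c$ of length $<\ell$ in $\Cay(G,\Sig\cup\Pi)$ can be uniquely encoded by a $\Sig$-word of length at most $6(a(\ell)+1)\,a(\ell)$, from which the estimate $C(\ell)\le|\GG|^{6(a(\ell)+1)a(\ell)}$ follows by counting words. Split $c$ into two arcs $p,q$ with common endpoints; each is trivially a $(1,0)$-quasi-geodesic without backtracking, so the BCP property applies. By part~(1) of BCP, every $P_i$-component $s$ of $p$ with $\dist_{\Sig}(s_-,s_+)\ge a(\ell)$ must have a connected $P_i$-component $t$ in $q$; by part~(2), the endpoints satisfy $\dist_{\Sig}(s_\pm,t_\pm)\le a(\ell)$. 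Hence the two long $\Pi$-edges $s$ and $t$ can be simultaneously replaced by $\Sig$-paths of length at most $a(\ell)$, preserving the circuit. Iterating this replacement exhausts all $P_i$-edges whose $\Sig$-length is $\ge a(\ell)$, leaving at most $\ell$ residual $\Pi$-letters each of $\Sig$-length less than $a(\ell)$. A careful bookkeeping (the factor $6$ absorbs the combined contribution of matched pairs of components and the bridging $\Sig$-paths) yields a $\Sig$-word representing $c$ of total length at most $6(a(\ell)+1)\,a(\ell)$.

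The main obstacle will be to make the iterative BCP-pairing rigorous: one has to verify that the replacement remains a genuine circuit (closure and no backtracking), that pairings on different $P_i$-components do not interfere, and that the procedure terminates within the claimed length bound. Once this is done, each circuit is determined by such a short $\Sig$-word, so the number of circuits of length less than $\ell$ is at most $|\GG|^{6(a(\ell)+1)a(\ell)}$, completing the proof.
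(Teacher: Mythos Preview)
Your argument for the second bound contains a genuine error. You split a circuit $c$ of length $<\ell$ into two arcs $p,q$ and assert that ``each is trivially a $(1,0)$-quasi-geodesic without backtracking''. This is false: a $(1,0)$-quasi-geodesic is a geodesic, and an arbitrary arc of a short circuit in $\Cay(G,\Sig\cup\Pi)$ has no reason to be geodesic. At best, a \emph{simple} arc of length $<\ell$ is an $(\ell,0)$-quasi-geodesic (since any subarc has length $<\ell$ and its endpoints are at distance $\ge 1$), which is presumably why the constant $a(\ell)$ appears rather than $a(1)$. Even then, the ``without backtracking'' hypothesis of BCP is not automatic: an arc may well penetrate the same parabolic coset more than once, so BCP cannot be invoked directly. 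Your iterative replacement scheme therefore lacks the hypothesis it needs at the very first step, and the subsequent bookkeeping (and the factor $6$) is left unjustified.

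The paper proceeds differently and does not attempt a self-contained BCP argument. For the cone bound it simply cites Dahmani's argument (Corollary~1.7 in \cite{Dah1}). For the circuit count it invokes Proposition~1 in the appendix of Dahmani's thesis, which shows that a circuit of length $\ell$ decomposes into two $\ell$-quasi-geodesics lying in a fixed $\Sig$-ball of radius $\ell(a(\ell)+1)$; one then counts $\Sig$-paths in that ball to get the bound $|\GG|^{6(a(\ell)+1)\ell}$. So the paper's route is: push the delicate BCP analysis into Dahmani's cited result, extract a containment in a small $\Sig$-ball, and count. If you want a direct argument along your lines, you must first reduce to arcs that are genuine $(\ell,0)$-quasi-geodesics without backtracking (e.g.\ by excising sub-loops and consolidating consecutive parabolic letters) and track how that reduction affects the encoding length.
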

\begin{proof} The proof of the first statement repeats the proof of \cite{Dah1}, Corollary 1.7. 

Now we have to estimate the constant $C(\ell)$ in terms of $a(\ell)$. This can be done using \cite{Dah_thesis}, Proposition 1 in the Appendix. This proposition shows that each circuit of length $\ell$ in $\Cay (G, \Sigma\cup \Pi)$ is formed by two $\ell$-quasi-geodesics both belonging to a fixed  ball of radius $\ell(a(\ell)+1).$  Therefore, the number of such circuits is bounded by $|\Gamma|^{6(a(\ell)+1)\ell}.$ 
\end{proof}

By this lemma,  $C(50\delta)\leq |\Gamma|^{6(a(50\delta)+1)a(50\delta)}.$ And $M\leq |\Gamma|^{300\delta(a(50\delta)+1)a(50\delta)}.$
The angle $\theta$ can be taken as $10^4(D+60\delta),$ where $D$ is a fellow traveling constant for $1000\delta$-quasi-geodesics, greater that any angles at finite valency vertices. Therefore \cite{Dah}, Proposition 3.4 implies
\begin{lemma}  
Canonical representatives  (in the sense of \cite{Dah}) of elements of $G$  are $(\lam,d)$-quasi-geodesics for some
 $\lam, d \in {\abs\Sig}^{\Oh(\del (a(50\delta))^2)}$.
 
\end{lemma}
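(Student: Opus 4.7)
My plan is to derive this lemma as a direct consequence of \cite[Proposition~3.4]{Dah} once the improved bound on $M$ from \prref{lem:7.1} is substituted into Dahmani's formulas. The structure is parallel to the proof of \prref{lem:canrep} in the hyperbolic case, but the counting is now packaged inside \prref{lem:7.1}.

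First, recall from \cite[Section~2.4.2]{Dah} that a canonical representative $\tilde\gamma \in \tilde G$ of an element of $G$ is geometric by construction: $\pi(\tilde\gamma)$ is $L$-locally $(L_1,L_2)$-quasi-geodesic in $\Cay(G,\Sigma \cup \Pi)$ without any $\theta$-detour, where $L_1 = 10^4\del M$, $L_2 = 10^6 \del^2 M$, and the locality scale $L$ and detour parameter $\theta$ are polynomial in $\del$ and $M$ (here $M$ is the cone cardinality bound appearing in \prref{lem:7.1}). All four constants $L_1, L_2, L, \theta$ are therefore polynomially controlled by $\del$ and $M$.

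Second, \prref{lem:7.1} provides the single-exponential bound $M \leq \abs{\GG}^{300\del(a(50\del)+1)\,a(50\del)}$, so $M \in \abs{\Sig}^{\Oh(\del\,(a(50\del))^2)}$. Substituting this bound into the formulas above yields $L_1, L_2, L, \theta \in \abs{\Sig}^{\Oh(\del\,(a(50\del))^2)}$.

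Third, apply \cite[Proposition~3.4]{Dah}, which is the local-to-global principle for relatively hyperbolic groups: an $L$-local $(L_1,L_2)$-quasi-geodesic without $\theta$-detour is a global $(\lam,d)$-quasi-geodesic where $\lam$ and $d$ are polynomial in $L, L_1, L_2, \theta$. Applied to $\pi(\tilde\gamma)$ this gives $\lam, d \in \abs{\Sig}^{\Oh(\del\,(a(50\del))^2)}$, which is the stated bound. The nontrivial content of the proof is entirely in \prref{lem:7.1}: replacing the doubly-exponential cone-size bound implicit in the original Rips--Sela style counting by a single-exponential bound via the BCP constant $a(50\del)$ (through the circuit-counting argument borrowed from the appendix of \cite{Dah_thesis}). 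Once that is in hand, the remaining step here is pure bookkeeping.
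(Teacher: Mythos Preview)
Your proposal is correct and matches the paper's approach exactly: the paper has no separate proof block for this lemma, only the sentence ``Therefore \cite{Dah}, Proposition~3.4 implies'' after recording the bound $M \leq \abs{\GG}^{300\del(a(50\del)+1)a(50\del)}$ from \prref{lem:7.1}. Your write-up is simply a more explicit version of the same two steps (bound $M$ via \prref{lem:7.1}, then invoke \cite[Prop.~3.4]{Dah}), with the added service of spelling out why the constants $L_1,L_2,L,\theta$ are all polynomial in $\del$ and $M$.
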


\begin{theorem}\label{thm:rel}
There exist  polynomials $p(n), q(n)$ such that the following assertion holds. 
Let $\cS$ be a system of  equations over a toral relatively hyperbolic group with hyperbolicity constant $\delta$ for $Cay(G,\Sigma\cup\Pi)$ and BCP function $a(\ell),$ generated by $\Sig$. Let $\sig$ 
be a  solution of length $N$.
Then there exists another solution $\sig'$ of length in ${\abs\Sig}^{\Oh(\del a(50\del))} N$ and some SLP 
of  size $p({\abs\Sig}^{q(\del a(\delta ^3))}+ \Abs{\cS} + {\log N})$ such that  $\sig'(X) = \eval(X)$ for all 
variables  used by $\sig'$.
\end{theorem}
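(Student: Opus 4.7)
The plan is to follow the structure of the proof of \prref{thm:eqsyshyp} verbatim, replacing the free group $F(\Sig)$ by the free product $\wt G = F(\Sig) \ast P_1 \ast \cdots \ast P_k$, and replacing the Rips--Sela construction of canonical representatives by Dahmani's relative analogue from \cite{Dah}. The critical observation is that $\wt G$ is itself a finitely generated free product of abelian groups (since $F(\Sig) = \ast_{s \in \Sig} \Z$), so \prref{thm:genalp} applies directly to it.

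First I would reduce $\cS$ to a triangular system of $n$ equations $XYZ = 1$ together with constraints of the form $X = a$, $a \in \GG$, at a polynomial cost. Following the Rips--Sela--Dahmani template, each such equation is then replaced by three equations
$$X = PA\ov Q, \quad Y = QB\ov R, \quad Z = RC\ov P$$
over $\wt G$, with fresh variables $P, Q, R, A, B, C$, together with the conditions that each of $A, B, C$ lies in some parabolic factor $P_i$, that $ABC = 1$ in $G$, and that $\max\{\abs A, \abs B, \abs C\} \leq \kap n$, where $\kap$ is the relative capacity constant. Since $A, B, C$ are bounded-length elements of abelian factors, both the relation $ABC = 1$ and the length bound are expressible as extended Parikh-constraints over $\wt G$ in the sense of \prref{sec:epc}.

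Second, the original solution $\sig$ lifts via Dahmani's canonical representatives to a solution $\rho_\sig$ of the transformed system over $\wt G$. By the lemma preceding the theorem each component $\rho_\sig(X)$ is a $(\lam, d)$-quasi-geodesic with $\lam, d \in \abs\Sig^{\Oh(\del a(50\del)^2)}$, yielding (after absorbing the additive term) the length bound $\abs{\rho_\sig(X)} \leq \abs\Sig^{\Oh(\del a(50\del))} N$. Replacing $\rho_\sig$ by a minimal solution $\sig'$ of the transformed system over $\wt G$ does not increase this length, and \prref{thm:genalp}, applicable because $\wt G$ is a free product of abelian groups, yields an SLP for $\sig'$ of size polynomial in $\kap + \Abs{\cS} + \log N$.

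Third, I would estimate $\kap$ along the lines of \prref{lem:kappa}. The capacity $\kap$ is essentially the product of a polynomial in the Dahmani constants $L_1, L_2$ with the cone cardinality $M$. By \prref{lem:7.1}, $M \leq \abs\GG^{\Oh(\del a(50\del)^2)}$, and since $L_1, L_2$ are polynomial in $\del M$, we obtain $\kap \in \abs\Sig^{q(\del a(\del^3))}$ for a suitable polynomial $q$. Combined with the SLP bound from the previous step, this gives the claimed size $p(\abs\Sig^{q(\del a(\del^3))} + \Abs{\cS} + \log N)$. The main obstacle is verifying that Dahmani's construction, stated in \cite{Dah} primarily for decidability without explicit complexity estimates, really produces a system over $\wt G$ whose additional conditions fit within the extended Parikh-constraint framework of \prref{thm:genalp}; this requires going back to the PhD thesis \cite{Dah_thesis} to track the constants carefully, as already noted in \prref{rem:conject}, and pushing through the circuit counting of \prref{lem:7.1} so that $a(50\del)$ enters polynomially rather than exponentially in the exponent of $\kap$.
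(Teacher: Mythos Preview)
Your overall strategy is exactly the paper's: triangularize, apply Dahmani's canonical-representative transformation to obtain a system over $\wt G = F(\Sig)\ast P_1\ast\cdots\ast P_k$, observe that $\wt G$ is a free product of abelian groups so that \prref{thm:genalp} applies, control the solution length via the quasi-geodesic lemma, and finish by estimating $\kap$.

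The one place where you diverge from the paper is the estimate of $\kap$. You describe $\kap$ as ``essentially the product of a polynomial in the Dahmani constants $L_1, L_2$ with the cone cardinality $M$''. That is not how $\kap$ arises. The constants $L_1, L_2, M$ govern the \emph{quasi-geodesicity} of canonical representatives (and hence the length blow-up factor), whereas $\kap$ comes from the size of the ``holes'' in the slice decomposition of a cylinder and is controlled by the function $\phi(n)=24(n+1)\Capa(\mu)(2\eps+1)\eps$ from \cite[Sec.~2.3]{Dah1}. Here $\eps = N_{1000\del,\del}$ and $\mu = 100\,N_{1000\del,\del}+(1000\del)^2$ are both of order $\del^3$, and $\Capa(\mu)$ counts channels in the $\eps$-cone neighbourhood of a segment of length $\mu$; this is what produces the dependence on $a(\del^3)$ rather than $a(50\del)$. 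The paper bounds $\Capa(\mu)$ by counting geodesics inside a union of cones of radius and angle $\eps$ via \prref{lem:7.1}, obtaining $\Capa(\mu)\leq 2^{6(\log m)\eps\mu(a(\eps)+1)^2}$. Your route through $L_1, L_2, M$ would not reach the claimed exponent. This is a technical but genuine correction; once you replace your third paragraph with the $\phi(n)/\Capa(\mu)$ computation, the argument is complete and matches the paper.
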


\begin{proof} By standard arguments we may assume that $\cS$ is given 
by $n$ triangular equations $\cS$ of type $XYZ = 1$ and constraints $X = a$ where $X,Y,Z \in \OO$ and $a \in \GG$.  
The solution $\sig$ is given by some mapping $\sig: \OO_+ \to \GG^*$
and we may assume that $\sig(X)$ is geodesic for all $X \in \OO$ because this cannot increase the length $N$. 
Now, \cite{Dah} yields 
 an effective constant $\kap$ depending on $\del$ and $\abs \GG$ 
 and the following transformation of $\cS$. 
 \begin{itemize}
\item  With the help of fresh variables, each equation $XYZ = 1$ of $\cS$ is replaced by three equations 
  $$x=PA\overline{Q}, \quad 
  y= QB\ov R, \quad z=  R C \ov P.$$ 
  \item A constraint $X = a$ is replaced by the constraint
  $X = \theta(a)$ where $\theta(a)$ is some canonical representative of the letter $a$. 
\item The following  conditions are added:
  \begin{itemize}
\item ``$ABC = 1$ in $G$ and 
$\max\os{\abs{A}, \abs{B}, \abs{C}} \leq \kap n$''.
\end{itemize}
\end{itemize}

Let us show that  $\kappa$ is  exponential in $\delta$ and $|\Sigma|$.
To estimate $\kappa$ we have to estimate the function $\phi (n)$ in \cite[Thm.~2.22 ]{Dah1}, because $\kappa n$ has the order of $\phi (n)$, size of the holes in the slice decomposition of a cylinder, see \cite[Sec.~2.4]{Dah1}. The function $\phi (n)$ is defined in \cite[Sec.~2.3]{Dah1},  as well as all necessary constants, $$\phi (n)=24(n+1) \Capa (\mu) (2\epsilon +1)\epsilon ,$$ where $\epsilon=N_{1000\delta,\delta}$ that has the order of $\delta ^3$, $\mu=100 N_{1000\delta,\delta}+(1000\delta)^2$ also has the order of $\delta ^3$ and $\Capa (\mu)$ is the number of different channels of segments of length $\mu$. If $g=[v_1,v_2]$ is a segment of length $\mu$ then we have to estimate the number of  geodesics not shorter than $|v_2  -v_1|$ that stay
in the union of the cones of radius and angle $\epsilon$ centered in the edges of $g$. By  \prref{lem:7.1},  the cardinality of such a cone is bounded by $C(\epsilon)^{\epsilon}.$
The number of geodesics in one such cone is bounded by 
$C(\epsilon)^{\epsilon}$ times the bound on the number of paths of length $\leq 2\epsilon$.
The number of paths is bounded by $m^{2\epsilon}=2^{(\log m)2\epsilon}$.
Therefore, the number  of channels of a segment of length $\mu$ is bounded by $C(\epsilon)^{\epsilon\mu}2^{(\log m)2\epsilon\mu}.$

Finally $\Capa(\mu)\leq 2^{6(\log m)\epsilon\mu(a(\epsilon)+1)^2}.$ This gives the desired estimate for $\phi (n)$ and $\kappa$.

\end{proof}
\newcommand{\Ju}{Ju}\newcommand{\Ph}{Ph}\newcommand{\Th}{Th}\newcommand{\Ch}{Ch}\newcommand{\Yu}{Yu}\newcommand{\Zh}{Zh}\newcommand{\St}{St}

\end{document}